\newcommand{\specabs}{\operatorname{spec}_{\operatorname{abs}}}
\newcommand{\Eig}{{\rm Eig}}
\newcommand{\Piperp}{\Pi^\perp}
\newcommand{\lmin}{l_{\operatorname{min}}}
\renewcommand{\epsilon}{\varepsilon}
\renewcommand{\phi}{\varphi}
\newcommand{\Xinfty}{X^\infty}
\newcommand{\XNG}{X^N_G}
\newcommand{\DeltaXinfty}{\Delta_{X^\infty}}
\newcommand{\DeltaXNG}{\Delta_{X_G^N}}
\newcommand{\alphamax}{\alpha_{\rm{max}}}
\newcommand{\tmax}{t_{\rm{max}}}
\newcommand{\Bperp}{B_\perp}
\newcommand{\Etilde}{\tilde{E}}
\newcommand{\distsymm}{\dist_{\rm{symm}}}
\newcommand{\Eigapp}{\Eig^{{\rm app}}}
\newcommand{\lambdamax}{\lambda_{\rm max}}
\newcommand{\ubar}{\overline{u}}
\newcommand{\la}{\langle}
\newcommand{\ra}{\rangle}
\begin{document}
\title{Spectra of graph neighborhoods and scattering}

\author{Daniel Grieser}
\address{Institut f\"ur Mathematik, Carl von Ossietzky Universit\"at Oldenburg, D-26111 Oldenburg}
\email{grieser@mathematik.uni-oldenburg.de}
%\datver{October 9, 2007}
\keywords{Quantum graph, cylindrical end, perturbation theory}
\subjclass[2000]{Primary 58J50 % Spectral problems on manifolds
                         35P99 % PDE: Spectral theory
            Secondary    47A55 % Perturbation theory
                         81Q10 % Quantum theory
                         }
\begin{abstract}
Let $(G_\epsilon)_{\epsilon>0}$ be a family of '$\epsilon$-thin' Riemannian manifolds modeled on a finite metric graph $G$, for example, the $\epsilon$-neighborhood of an embedding of $G$ in some Euclidean space with straight edges. We study the asymptotic behavior of the spectrum of the Laplace-Beltrami operator on $G_\epsilon$ as $\epsilon\to 0$, for various boundary conditions. We obtain complete asymptotic expansions for the $k$th eigenvalue and the eigenfunctions, uniformly for $k\leq C\epsilon^{-1}$,  in terms of scattering data on a non-compact limit space. We then use this to determine the quantum graph which is to be regarded as the limit object, in a spectral sense, of the family $(G_\epsilon)$.

Our method is a direct construction of approximate eigenfunctions from the scattering and graph data, and use of a priori estimates to show that all eigenfunctions are obtained in this way.
\vspace{-1cm}
\end{abstract}
\maketitle

\tableofcontents

\section{Introduction}
Consider a graph $G$ with a finite number of vertices and edges embedded in $\R^n$ with straight edges. For $\epsilon>0$ let $G_\epsilon$ be the set of points of distance at most $\epsilon$ from $G$. For small $\epsilon$, $G_\epsilon$ 'looks almost like' $G$; in other words, the one-dimensional object $G$ should be considered as
a good model for $G_\epsilon$, and so one expects many physical and analytical properties of $G_\epsilon$ to be understandable, in an approximate sense, by corresponding properties of $G$. The property we analyze in this light here is the spectrum of the Laplacian, under various boundary conditions. For motivation from physics, see for example \cite{Kuc:GMWTS}.

This problem has received some attention in the last decade. The case of Neumann boundary conditions was analyzed at various levels of generality in \cite{Col:MPVPNNL}, \cite{FreWen:DPGAP}, \cite{KucZen:CSMSCG}, \cite{RubSch:VPMCTSI}, \cite{ExnPos:CSGLTM}, where it was shown that for each $k\in\N$ the $k$th eigenvalue of the Neumann Laplacian on $G_\epsilon$ converges, as $\epsilon\to 0$, to the $k$th eigenvalue of the second derivative operator on the union of the edges of $G$, where at the vertices so-called Kirchhoff boundary conditions are imposed. The question of what the corresponding limiting behavior is for other, for example Dirichlet, boundary conditions was characterized as 'very difficult' in \cite{ExnPos:CSGLTM} and remained open until some partial progress was made recently, see Section \ref{subsec previous}.

In this paper we solve this problem for a general mixed boundary problem, where we impose Dirichlet boundary conditions on one part of the boundary and Neumann conditions on the rest. Other boundary conditions, for example Robin, are also possible. Instead of the setting of an embedded graph described above, we consider the more general, and mathematically more natural, situation of a shrinking family of Riemannian manifolds modeled on the graph: We now consider $G$ as an abstract metric graph, that is,  for each edge $e$  a positive number $l_e$ (to be thought of as half the edge length) is given. In addition, we have geometric data: For each edge $e$
an $(n-1)$-dimensional Riemannian manifold $Y_e$, and for each edge $v$ an $n$-dimensional Riemannian manifold $X_v$; all these manifolds are compact and may have a piecewise smooth boundary; also, for each edge $e$ incident to a vertex $v$ (denoted $e\sim v$), we are given an identification (gluing map) of $Y_e$ with a subset of the boundary of $X_v$, without overlaps. Then $G_\epsilon$ is defined by gluing cylinders of length $2l_e$ and cross section $\epsilon Y_e$ (the factor denotes a rescaling of the metric) to
$\epsilon X_v$ for all pairs $e\sim v$. Also given are subsets D and N of the boundaries of each $X_v$ and $Y_e$, which yield corresponding subsets of $G_\epsilon$ (or, for Robin conditions, similar data). We investigate the Laplace-Beltrami operator on $G_\epsilon$, where Dirichlet boundary conditions are imposed on D and Neumann conditions on N. We suppose sufficient regularity of the boundary and the D/N decomposition so that this problem has discrete spectrum.

Besides the $\epsilon$-neighborhoods mentioned before this manifold setting includes the case of the boundary of the $\epsilon$-neighborhood of $G$ (in this case, $G_\epsilon$ itself has no boundary).
Our results imply the following theorem.
\begin{theorem}
\label{maintheorem_Gepsilon}
Let $\mu_1(\epsilon)\leq \mu_2(\epsilon)\leq\dotsc$ be the eigenvalues of the Laplacian on $G_\epsilon$, with the given boundary conditions, repeated according to their multiplicity.

For each edge $e$, let $\nu_e$ be the smallest eigenvalue of $-\Delta$ on $Y_e$, with the given boundary conditions, and let $\nu=\min_e \nu_e$.

There are numbers $D\in\N_0$, $0<\tau_1\leq \tau_2\leq\dotsc\leq \tau_D\leq \nu$
and $0\leq b_1\leq b_2\leq\dots\to\infty$ so that for $\epsilon\to 0$
\begin{align}
\label{eqnlowev Gepsilon}
\mu_k(\epsilon) &= \epsilon^{-2} \tau_k + O(e^{-c/\epsilon}),&& k=1,\dots,D\\
\label{eqnhighev Gepsilon}
\mu_k(\epsilon) &= \epsilon^{-2} \nu + b_{k-D} + O(\epsilon),&& k>D
\end{align}
\end{theorem}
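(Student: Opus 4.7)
I would rescale $G_\epsilon$ by $\epsilon^{-1}$, so that the eigenvalues become $\epsilon^2\mu_k(\epsilon)$ and the cylinder half-lengths become $l_e/\epsilon\to\infty$. The natural limit object is the non-compact manifold $\Xinfty$ obtained by attaching a half-cylinder $[0,\infty)\times Y_e$ to $X_v$ for each incidence $e\sim v$. Its Laplacian has absolutely continuous spectrum $[\nu,\infty)$ together with finitely many $L^2$-eigenvalues $0<\tau_1\leq\cdots\leq\tau_D\leq\nu$ below threshold. This naturally splits the $\mu_k(\epsilon)$ into ``trapped'' modes with $\epsilon^2\mu_k(\epsilon)\leq\nu-\delta$ and ``propagating'' modes with $\epsilon^2\mu_k(\epsilon)\to\nu$.

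\textbf{Trapped eigenvalues \eqref{eqnlowev Gepsilon}.} For $k=1,\dots,D$, the $\tau_k$-eigenfunction of $\Xinfty$ decays like $e^{-\sqrt{\nu_e-\tau_k}\,t}$ along each cylinder. Truncating at distance $l_e/\epsilon$ (and adjusting to satisfy the mid-cylinder matching imposed by $G_\epsilon$) produces a trial function on the rescaled $G_\epsilon$ whose residual in the eigenvalue equation and boundary data is of size $e^{-c/\epsilon}$. The standard quasi-mode estimate then produces an eigenvalue within $e^{-c/\epsilon}$ of $\tau_k$; near-orthogonality of the $D$ trial functions shows these are exactly the first $D$ eigenvalues, pending the completeness statement below. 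The boundary case $\tau_k=\nu$ needs a careful threshold/resonance analysis on $\Xinfty$, absorbing any ``half-bound state'' into the dimension $D$.

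\textbf{Propagating eigenvalues \eqref{eqnhighev Gepsilon}.} Writing $\epsilon^2\mu_k(\epsilon)=\nu+\epsilon^2 b$ with $b$ bounded, on each cylinder the lowest cross-sectional mode $\phi_{e,1}$ on $Y_e$ dominates with longitudinal factor $\alpha_e\cos(\sqrt{b}\,s)+\beta_e\sin(\sqrt{b}\,s)$ in the unscaled edge coordinate $s=\epsilon t\in[0,l_e]$, while higher transverse modes are exponentially damped because $\nu_{e,j}>\nu$ for $j\geq 2$ (or $e$ is not a minimising edge). The matching across each $\epsilon X_v$ is controlled, to leading order in $\epsilon$, by the generalised threshold eigenfunctions of $\Xinfty$ on $X_v$ — that is, by the on-shell scattering data at the bottom of the continuous spectrum — and this matching defines a self-adjoint vertex condition on the edge values, hence a quantum graph Laplacian $H_G$ on the metric graph $G$. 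The numbers $b_j$ are its eigenvalues. Approximate eigenfunctions are built by lifting a quantum graph eigenfunction to the cylinders and splicing it with the appropriate scattering solution on each $\epsilon X_v$ via cutoffs.

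\textbf{Main obstacle: completeness.} The hard part is to show that \emph{every} $\mu_k(\epsilon)$ is accounted for, with no spurious extras. For this I would establish an a priori dichotomy: any $L^2$-normalised eigenfunction $u$ with eigenvalue $\leq\epsilon^{-2}\nu+C$ has its components in transverse modes of index $\geq 2$ exponentially small in the cylinders (on scale $\epsilon/\sqrt{\nu_{e,2}-\nu}$), so that on each cylinder $u$ reduces to its lowest-mode projection; that projection satisfies, up to controllably small error, the quantum graph eigenvalue equation with the correct vertex condition, while on each $\epsilon X_v$ it is close either to a trapped eigenfunction of $\Xinfty$ or to a threshold scattering solution matched to the edge data. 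Combining this concentration with a Weyl-type upper counting bound on $G_\epsilon$ matches eigenvalue indices on both sides and yields the uniform statement for $k\leq C\epsilon^{-1}$. This a priori ``essentially lowest mode versus essentially trapped on $X_v$'' alternative, together with its quantitative control uniformly in $k$, is the principal technical content; the scattering-theoretic identification of the limit vertex conditions is the conceptual heart.
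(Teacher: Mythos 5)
Your overall architecture matches the paper's: rescale by $\epsilon^{-1}$, pass to the non-compact limit $X^\infty$, split trapped ($L^2$) from propagating (scattering) modes, build quasi-modes from cut-off $L^2$-eigenfunctions and from scattering solutions whose leading part fits the mid-edge matching conditions, and identify the limit vertex conditions via $S(0)$. The gap is in how you close the completeness half. You reduce it to ``the lowest-mode projection satisfies the quantum graph equation up to controllable error'' plus ``a Weyl-type upper counting bound.'' Neither step is strong enough as stated. Knowing that $\Pi u$ almost satisfies the secular equation $(\Id-e^{i\beta N2L}\sigma S(\beta))\phi\approx 0$ does not by itself show $\beta$ is exponentially close to a true solution: one needs a \emph{quantitative stability} result for this family of nonlinear eigenvalue problems, uniformly as the solutions become dense (spacing $\sim 1/N$) and as branches cross. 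That is exactly what the paper's Theorem \ref{thm stability} provides, and it is nontrivial; it relies on the monotone-unitary-family machinery of the Appendix (Lemma \ref{lemma stability}), whose hypotheses ($L>0$, i.e.\ positive edge lengths, giving $\frac1i U'U^{-1}>0$) are what make the stability uniform. A Weyl-type count on $G_\epsilon$ cannot substitute for this: the leading Weyl term for $X^N_G$ up to a fixed spectral level below $\nu_1$ grows like $N$, but the remainder is $O(1)$, and an $O(1)$ index ambiguity is exactly what would permit spurious eigenvalues and destroy the exact indexing $\mu_k=\epsilon^{-2}\nu+b_{k-D}+O(\epsilon)$. The paper sidesteps counting entirely by bounding $\dist(\Eig_{I,N},\Eigapp_{I,N})$ in both directions.

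The second gap is the threshold band $\mu\in(\nu-c_0,\,\nu-e^{-cN})$ on the propagating side. There are no scattering solutions with $\lambda<\nu$, so ``close to a threshold scattering solution'' gives no contradiction there, and your trapped-side threshold remark (absorbing half-bound states into $D$) does not cover it either. The paper's Theorem \ref{thm general theorem lambda close to nu}, Step 2b, rules out such eigenvalues by comparing $u$ to $E_{0,\Phi_+,\Psi_-}$ and exploiting the matching conditions via the explicit $\tanh(aN)$ computation, forcing $a^2=|\nu-\mu|\lesssim e^{-cN}$. This ``very small eigenvalues'' analysis is flagged in the paper as the most delicate part of the whole argument; your proposal would need an analogous mechanism, and the Weyl count again cannot supply it.
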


In fact, we obtain much more precise information: complete asymptotics  up to errors of the form $O(e^{-c/\epsilon})$ also in the case $k>D$, uniform estimates not only for fixed $k$ but also for $k$ on the order of $\epsilon^{-1}$ as well as detailed information on the eigenfunctions, see the theorems below.

For Theorem \ref{maintheorem_Gepsilon} to be meaningful it is essential to identify the numbers $\tau_k$ and $b_k$ in terms of the given data. We do this in two steps: First, we give general formulas identifying these numbers in terms of scattering data of an associated non-compact $n$-dimensional limiting problem. Second, we analyze  how this scattering data can be obtained from the given data. It turns out that in this second step the  situation of general boundary conditions carries some essential new features compared to Neumann boundary conditions: while in this case one has $D=0$ and the $b_k$ are determined by the graph and the edge lengths alone (that is, independent of the manifolds $X_v$, $Y_e$), in general $D>0$ and the $\tau_k$ and $b_k$ depend also on transcendental analytic information involving the manifolds $X_v$ and $Y_e$: The $\tau_k$ are $L^2$-eigenvalues in the afore-mentioned limit problem, and the $b_k$ are eigenvalues of a quantum graph whose vertex boundary conditions involve the scattering matrix of that limit problem, see Theorems \ref{maintheorem1} and \ref{mainthmquantumgraph}.

%To motivate our main results, we look at the simple example of a $1\times\epsilon$ %rectangle with Dirichlet boundary conditions. The $k$th eigenvalue is %$\epsilon^{-2}\pi^2+k^2\pi^2$, for $\epsilon<\frac1k$. The diverging term %$\epsilon^{-2}\pi^2$ arises from the short cross sectional interval $[0,\epsilon]$, %and its coefficient $\pi^2$ is the first Dirichlet eigenvalue on this interval. The %subleading term is the $k$th eigenvalue on the 'long' interval $[0.1]$. Our main %result identifies the corresponding coefficients in the general case.

\subsection{Main results}
We now state our results more precisely.
Because of  the divergence in \eqref{eqnlowev Gepsilon},  \eqref{eqnhighev Gepsilon} it is more convenient, and for a proper understanding it turns out to be essential, to rescale the problem: We multiply all lengths by $\epsilon^{-1}$. We denote
$$N:=\epsilon^{-1},\quad X^N_G := \epsilon^{-1} G_\epsilon$$
This rescales the eigenvalues by the factor $\epsilon^2$. In $X^N_G$, the vertex manifolds and the cross sections of the edges are independent of $N$ while the lengths of the edges are $2Nl_e$, and we are interested in the limit $N\to\infty$. Central to the analysis is the limit object, to be thought of as $\lim_{N\to\infty} X^N_G$,
\begin{equation}
\label{eqn def Xinfty}
X^\infty := \bigcup_v X_v^\infty \quad \text{ (disjoint union)}
\end{equation}
where the 'star' $\Xinfty_v$ of a vertex $v$ of $G$ is obtained by attaching a half infinite cylinder  $[0,\infty)\times Y_e$ to the vertex manifold $X_v$, for each edge $e$ incident to $v$, see Figure \ref{figure1}.
Denote by $Y$ the cross-section for $X^\infty$, i.e.\ the disjoint union of the $Y_e$, each one appearing twice (once for each endpoint). The graph structure is encoded by a map $\sigma:Y\to Y$ which toggles the two copies of $Y_e$ for each edge $e$.
See Section \ref{sec setup} for  precise definitions.
\begin{figure}
\includegraphics[scale=0.8]{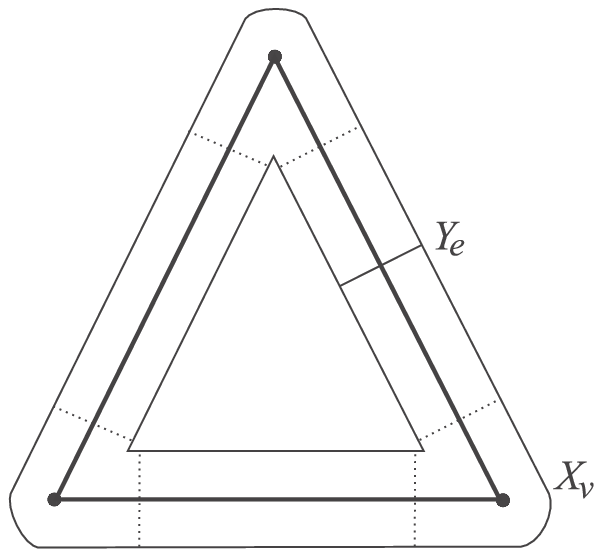} \hfill \includegraphics[scale=0.6]{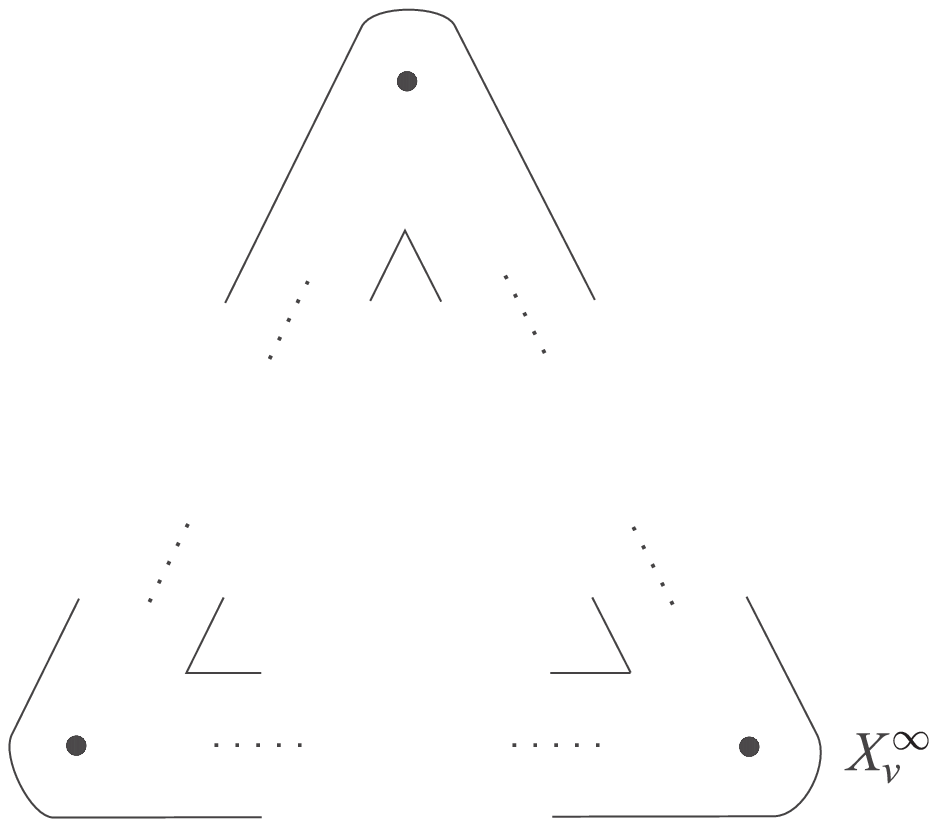}
\caption{The setup: $X^N_G$ and $\Xinfty$}
\label{figure1}
\end{figure}

Since $X^\infty$ is a non-compact space, the spectrum of the Laplacian $-\Delta_{X^\infty}$ (with corresponding boundary conditions) is no longer discrete. Its spectral theory is obtained via scattering theory, that is, one considers it as compact perturbation of $[0,\infty)\times Y$, the disjoint union of the edge cylinders. Let $\nu<\nu_1$ be the smallest and second smallest eigenvalue of the Laplacian of $Y$ and let $V$ be the eigenspace corresponding to $\nu$. The absolutely continuous spectrum of $-\Delta_{X^\infty}$ is $[\nu,\infty)$. To each $\lambda=\nu+\alpha^2\in[\nu,\nu_1)$ is associated  the scattering matrix $S(\alpha)$, a linear map $V\to V$ depending holomorphically on $\alpha$, and to each $\phi\in V$ a scattering solution (generalized eigenfunction) $E_{\alpha,\phi}$. These are the main objects of scattering theory.  In addition, $-\DeltaXinfty$ may have discrete spectrum ($L^2$-eigenvalues).

We also have linear maps $L,\sigma:V\to V$ where $L$ encodes the edge lengths (it equals $l_e\Id$ on the subspace of $V$ corresponding to the edge $e$) and $\sigma$ is induced by the graph structure map above. The maps $\sigma$ and $S(0)$ are involutions on $V$, so they have only the eigenvalues $\pm 1$.

We need the following data derived from the spectral data of $-\Delta_{X^\infty}$:

\begin{itemize}
\item
The $L^2$-eigenvalues $\tau_1'<\tau_2'<\dotsc < \tau_d'<\nu_1$ of $-\DeltaXinfty$, with eigenspaces $\calE_1,\dotsc,\calE_d\subset L^2(X^\infty)$;
\item
the space $\calP_0 := \Ker (\Id-\sigma) \cap \Ker (\Id-S(0))\subset V$.
\item
the spaces $\calP_z := \Ker (\Id - e^{i z2 L}\sigma S(0))\subset V$ for $z>0$. This is non-zero for a discrete set
\begin{equation}\label{eqn ai def}
0<z_1<z_2<\dotsc\quad\text{the zeroes of } f(z) = \det (\Id - e^{i z 2 L}\sigma S(0))
\end{equation}
\item
for each $i\in\N$, pairwise different holomorphic functions $Z_i^\rho(\alpha)$, $\rho=1,\dots,r_i$ with $r_i\in\N$, defined on $\C\setminus\{\alpha\in\R:\,|\alpha|\geq\nu_1\}$ and with
\begin{equation}
\label{eqn Z initial}
Z_i^\rho (0) = z_i
\end{equation}
for each $\rho$, and holomorphically varying subspaces $\calP_{i}^\rho\subset V$ with $\bigoplus_{\rho=1}^{r_i} \calP_{i}^\rho (0)=\calP_{z_i}$.
\end{itemize}
The $Z_i^\rho$ and $\calP_{i}^\rho$ arise from a bifurcation analysis of $\Ker (\Id - e^{i z 2 L}\sigma S(\alpha))$, see Theorem \ref{thmscattef1}.
\begin{theorem}[{\bf Main Theorem}]
\label{maintheorem1}
Denote by $-\DeltaXNG$ the Laplace-Beltrami operator on $\XNG$, with the given boundary conditions. For any $\lambdamax<\nu_1$ there are $c,N_0>0$ such that for $N>N_0$  the spectrum of $-\DeltaXNG$ in $[0,\lambdamax]$  consists of
(counting multiplicities)
\begin{enuma}
\item
$\dim \calE_p$ many eigenvalues of the form $\tau_p' + O(e^{-cN})$, for $p=1,\dotsc,d$,
\item
$\dim \calP_0$ many eigenvalues of the form $\nu + O(e^{-cN})$,
\item
$\dim \calP_{i}^\rho$ many eigenvalues of the form
\begin{equation}
\label{eqn alpha-formel}
\nu+\beta^2 \text{ with }\beta = \frac1{N} Z_i^\rho (\frac1{N}) + O(e^{-cN})
\end{equation}
 for each $i\in\N$  and $\rho=1,\dots,r_i$ for which $\nu+\beta^2\leq\lambdamax$.
\end{enuma}
The constants $c,N_0$ and the constants in the big $O$ only depend on the spectral data of $\DeltaXinfty$.
In particular, the asymptotics are uniform for eigenvalues $\leq\lambdamax$.
\end{theorem}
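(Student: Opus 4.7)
The plan is to prove both inclusions separately: (i) produce a family of quasimodes realising each $\lambda$ on the list (a)--(c), and (ii) use a priori estimates to show every true eigenvalue of $-\DeltaXNG$ below $\lambdamax$ must match one of them. For (i), start with case (a). An $L^2$-eigenfunction $\psi\in\calE_p$ of $-\DeltaXinfty$ with eigenvalue $\tau_p'<\nu_1$ decays exponentially on every cylindrical end, since the transverse eigenvalues on the ends lie in $\{\nu\}\cup[\nu_1,\infty)$ and the $\nu$-component of $\psi$ must vanish (else $\psi\notin L^2$). Truncating $\psi$ at the midpoint $t\approx Nl_e$ of each edge yields a function on $\XNG$ whose Laplacian residual and $L^2$ defect are both $O(e^{-cN})$.

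For (b) and (c), the quasimodes are glued from scattering solutions on the stars $\Xinfty_v$. On each cylindrical end, such a solution at energy $\nu+\alpha^2$ has $V$-components of the form $b\,e^{-i\alpha t}+S_v(\alpha)b\,e^{i\alpha t}$ plus a remainder that decays as $e^{-c\sqrt{\nu_1-\nu-\alpha^2}\,t}$. To glue into a single function on $\XNG$, I demand that, on each edge $e=(v,w)$, the ansatz from the $v$-end agree with that from the $w$-end via the edge-swap $\sigma$ after travelling length $2Nl_e$. Writing $b\in V$ for the collection of incoming amplitudes, this matching condition reduces to the single linear system
\begin{equation*}
\bigl(\Id - e^{i\alpha\cdot 2NL}\sigma S(\alpha)\bigr)\,b = 0.
\end{equation*}
Setting $z=N\alpha$, this is exactly the equation whose $N=\infty$ limit defines $\calP_0$ (at $z=0$) and $\calP_{z_i}$ (at $z=z_i$), and whose bifurcation for finite $N$ is encoded by $(Z_i^\rho,\calP_i^\rho)$ through Theorem \ref{thmscattef1}. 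Choosing $b\in\calP_i^\rho(1/N)$ and assembling the resulting star-level solutions with a plateau cutoff near each midpoint produces $u^{\rm app}$ with residual $\|(-\DeltaXNG-(\nu+\beta^2))u^{\rm app}\|=O(e^{-cN})\|u^{\rm app}\|$, because the cutoff sits where the non-$V$ modes are exponentially small. The standard quasimode principle applied to the self-adjoint operator $-\DeltaXNG$ then places a true eigenvalue within an $O(e^{-cN})$ window of each $\lambda^{\rm app}$, and the claimed multiplicities are obtained by indexing quasimode families by orthonormal bases of $\calE_p$, $\calP_0$, and each $\calP_i^\rho(1/N)$, verifying near-orthonormality in $L^2(\XNG)$, and applying min--max.

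For (ii), given a true eigenpair $(\lambda,u)$ with $\lambda\leq\lambdamax$, I decompose $u$ fibrewise on each edge cylinder into its $V$-component $u_\parallel$ and its transverse orthogonal complement $u_\perp$. Because $u_\perp$ solves an ODE in $t$ whose transverse spectrum is $\geq\nu_1>\lambda$, Agmon-type estimates force $u_\perp$ to be exponentially small in the middle of each edge compared with its boundary values. On that middle section $u$ is therefore essentially $u_\parallel=a(e)e^{i\alpha t}+b(e)e^{-i\alpha t}$ with $\alpha=\sqrt{\lambda-\nu}$, and the values of $u$ near the two endpoints must be compatible with a scattering solution on each adjacent star. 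Tracking this through the graph gives $(\Id - e^{i\alpha\cdot 2NL}\sigma S(\alpha))b = O(e^{-cN})$. Non-triviality of $u$ then forces $\alpha$ into an exponentially small neighbourhood of some $Z_i^\rho(1/N)/N$; the alternative $b=0$ means $u$ is essentially supported on the compact parts of the stars and so must be close, after rescaling, to an $L^2$-eigenfunction of $-\DeltaXinfty$, giving $\lambda$ close to some $\tau_p'$.

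I expect the heart of the argument to lie in this a priori step: obtaining the Agmon-type decay of $u_\perp$ and the scattering-rigidity of $u_\parallel$ at each vertex with constants that are uniform in $N$ and in $\lambda\leq\lambdamax$. Uniformity is indispensable because the number of eigenvalues in $[0,\lambdamax]$ grows linearly in $N$, so the exponentially small windows around the $\lambda^{\rm app}$ must not overlap; the separation and holomorphic structure of the $Z_i^\rho$ coming from Theorem \ref{thmscattef1} is what keeps them apart and makes the bijection between true eigenvalues and the list (a)--(c) possible.
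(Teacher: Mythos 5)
Your two-step structure (construct quasimodes from the scattering data and $L^2$-eigenfunctions; then use a priori estimates to show completeness) is exactly the paper's strategy, and your Step (i), including the derivation of the matching condition $\bigl(\Id - e^{i\alpha\cdot 2NL}\sigma S(\alpha)\bigr)b = 0$, is essentially Theorem \ref{thmscattef1} and the quasimodes $\Etilde_{\alpha,\phi}$ of \eqref{eqn def Etilde}. The gaps lie in Step (ii) and in your closing justification for why the bookkeeping works.

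First, your claim that the separation of the branches $Z_i^\rho$ ``keeps the exponentially small windows from overlapping'' is false. Branches with different $i$ do cross (see Figure \ref{figure2}), and on the range $\lambda\le\lambdamax$ the relevant $i$ grows like $N$, so clusters around distinct $\lambda\in\Lambda_N$ can and do merge. The paper never asserts a bijection between individual approximate eigenvalues and individual true eigenvalues; instead Theorem \ref{thm main general theorem lambda>nu}b) is stated for spectral subspaces $\Eig_{I,N}$ over intervals $I$ satisfying a buffer condition, and part a) is the sorted-sequence statement $|\lambda_i-\mu_i|\le Ce^{-cN}$, obtained by covering $\Lambda_N$ by $O(N)$ such intervals. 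Disjoint clusters exist only for bounded $i$ (Corollary \ref{cor bounded i}), not uniformly up to $\lambdamax$. Second, ``tracking $u_\parallel$ through the graph'' and invoking ``scattering rigidity at each vertex'' skips the actual mechanism: one needs an \emph{elliptic estimate} (Lemma \ref{lemma basic ell estimate}) on the compact part $X^0$, with the nonlocal boundary operator $\Bperp(\lambda)$ at the cylinder cross section and a transversality condition against the scattering Lagrangian $\calL_\alpha$, to conclude that an eigenfunction of $-\DeltaXNG$ with exponentially small $\Bperp$-data is exponentially close (in $H^2(X^0)$, hence in its boundary data) to an actual scattering solution $E_{\beta,\phi}$; only after that does the approximate matching condition $\|\BC_\beta(E)\|\le Ce^{-c_1N}\|\phi\|$ follow, and then one still needs the stability analysis of the nonlinear eigenvalue problem (Theorem \ref{thm stability} and Lemma \ref{lemma stability}). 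Third, your argument treats the whole window $\lambda\in[\nu-c_0,\lambdamax]$ uniformly, but the threshold $\lambda\approx\nu$ needs a separate and genuinely delicate argument (Theorem \ref{thm general theorem lambda close to nu}): the exponential basis $e^{\pm i\alpha\xi}$ degenerates at $\alpha=0$, the scattering subspace parametrization $(I+S(\alpha))\phi,\,-i\alpha(I-S(\alpha))\phi$ loses rank as $\alpha\to 0$, and showing that an eigenvalue in $[\nu-c_0,\nu)$ must in fact lie within $e^{-cN}$ of $\nu$ requires a quantitative analysis of the matching conditions at $\alpha$ purely imaginary (the $\tanh aN$ computation in the paper). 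Finally, your dichotomy ``$b=0$ implies $\lambda$ close to some $\tau_p'$'' does not account for embedded eigenvalues $\tau_p'\ge\nu$, which require the modified elliptic estimate \eqref{eqn basic ell estimate modified} subtracting the eigenprojection $P_0$.
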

In terms of $\epsilon=N^{-1}$ \eqref{eqn alpha-formel} gives an eigenvalue $\mu=\nu+\epsilon^2z_i^2 + O(\epsilon^3)$ by \eqref{eqn Z initial}, so this implies
Theorem \ref{maintheorem_Gepsilon}: take $D= \sum_{p=1}^{d'} \dim \calE_p$ with $d'=\max\{p:\tau_p'\leq \nu\}$; the $\tau_k$ are the $\tau_p'$ repeated $\dim \calE_p$ times; and the $b_k$ are $0$ repeated $\dim\calP_0$ times, and the numbers $z_i^2$ repeated  $\dim \calP_i$ many times. The $\tau_p$ with $p>d'$ are invisible in Theorem \ref{maintheorem_Gepsilon} since there $k$ is fixed as $\epsilon\to 0$.

Theorem \ref{maintheorem1} also describes eigenvalue $\mu_k$ with $k\to\infty$ as $\epsilon\to 0$. We determine the permitted range of $k$. From $\mu_k=\nu+\epsilon^2 z_i^2 + O(\epsilon^3)\leq\lambdamax$ one obtains $\epsilon z_i<\sqrt{\lambdamax-\nu}+O(\epsilon)$, and the Weyl asymptotics for the $z_i$, see Proposition \ref{prop alpha=0}, show that this is equivalent to
\begin{equation}
\label{eqn k epsilon bound}
k\leq \epsilon^{-1} \frac{\sum_e l_e}\pi \sqrt{\lambdamax-\nu} + O(1).
\end{equation}
See Theorem \ref{thm alphaleadingterm} for a more explicit description of eigenvalues with $\beta$ close to some $\beta_0>0$.

We can also describe the eigenfunctions in terms of the scattering solutions and of the eigenfunctions in $\calE_p$, with exponentially small errors, see Theorems \ref{thm ev < nu}, \ref{thm main general theorem lambda>nu} and \ref{thm general theorem lambda close to nu} and Corollary \ref{cor bounded i} for precise statements.

The restriction $\lambdamax<\nu_1$ is made to keep the exposition at a reasonable length. Our methods are such that a generalization to higher eigenvalues, taking into account several thresholds, should be fairly straightforward.

To identify the numbers $b_k$ in Theorem \ref{maintheorem_Gepsilon} in terms of the data, we recall that a quantum graph is given by a metric graph ($G$ with the edge lengths $2l_e$) together with a self-adjoint realization of the operator $-d^2/d\xi^2$ acting on functions defined on the disjoint union of the edges, where $\xi$ is a variable along each edge, measuring length; such a self-adjoint extension is given by boundary conditions at the vertices. We state the boundary conditions obtained in our setting. Denote by
$$ \calN_e := \ker(\Delta_{Y_e}+\nu)$$
the eigenspace for $\nu$ of the Laplacian on the cross section for an edge $e$.
For each vertex $v$ let $V_v=\bigoplus_{e\sim v}\calN_e$, then clearly $V=\bigoplus_v V_v$ corresponding to the decomposition \eqref{eqn def Xinfty}. Also, the scattering matrix $S(\alpha)$ is the direct sum
of scattering matrices $S_v(\alpha):V_v\to V_v$. For each vertex $v$, $S_v(0)$ is an involution on $V_v$. Denote by $u_e$ the restriction of a function on $G$ to the edge $e$, and by $\partial_n u_e(v)$ its inward normal derivative at the endpoint $v$ of $e$.
\begin{theorem}
\label{mainthmquantumgraph}
The numbers $b_k$ in \eqref{eqnhighev Gepsilon} are the eigenvalues of the operator $-d^2/d\xi^2$ on the metric graph $G$ with edge lengths $2l_e$, defined on the space of functions $u$ which on each edge $e$ are smooth and take values in $\calN_e$ and which at each vertex $v$ satisfy the boundary conditions
\begin{align}
\label{eqn bc1}
(u_e(v))_{e\sim v} &\in (+1)-\text{eigenspace of } S_v(0)\\
\label{eqn bc2}
(\partial_n u_e(v))_{e\sim v} &\in (-1)-\text{eigenspace of } S_v(0).
\end{align}
\end{theorem}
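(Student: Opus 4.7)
The plan is to derive the secular equation of the quantum graph operator and match it directly with the definition of the $b_k$ coming from Theorem \ref{maintheorem1}. The argument splits into two cases according to whether $\beta>0$ or $\beta=0$, where $\beta^2$ is the candidate eigenvalue.

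For $\beta>0$, on each edge $e$ of length $2l_e$ I write an eigenfunction $u_e$ of $-d^2/d\xi^2$ at eigenvalue $\beta^2$ near each endpoint $v_j$, in the local coordinate $s_j$ measuring distance from $v_j$, as
\begin{equation*}
u_e(s_j) = a_e^{(j)} e^{-i\beta s_j} + b_e^{(j)} e^{i\beta s_j},
\qquad a_e^{(j)},b_e^{(j)}\in\calN_e,
\end{equation*}
and collect these into vectors $\mathbf{a},\mathbf{b}\in V$ indexed by the sides $(e,j)$. Since $u_e(v_j)=a_e^{(j)}+b_e^{(j)}$ and $\partial_n u_e(v_j) = i\beta(b_e^{(j)}-a_e^{(j)})$, a short check using $S_v(0)=P^+_v-P^-_v$ shows that the two boundary conditions \eqref{eqn bc1} and \eqref{eqn bc2} at $v$, decomposed against the $\pm 1$-eigenspaces of $S_v(0)$, are jointly equivalent to the scattering relation $\mathbf{b}=S(0)\mathbf{a}$. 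Equating the two representations of $u_e$ near $v_1$ and $v_2$ on the edge interior yields $a_e^{(2)}=b_e^{(1)}e^{2i\beta l_e}$ together with its counterpart, i.e.\ $\mathbf{a}=e^{2i\beta L}\sigma\mathbf{b}$. Combining, $\mathbf{a}\in\calP_\beta=\Ker(\Id-e^{2i\beta L}\sigma S(0))$, and the assignment $u\mapsto\mathbf{a}$ is a linear bijection of the $\beta^2$-eigenspace onto $\calP_\beta$. For $\beta>0$ this is nontrivial precisely when $\beta=z_i$, with multiplicity $\dim\calP_{z_i}=\sum_\rho\dim\calP_i^\rho$, matching the number of $b_k$ equal to $z_i^2$.

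For $\beta=0$ the ansatz above degenerates, so I switch to an energy argument. Suppose $u_e''=0$ on each edge and the boundary conditions hold. Integration by parts on each edge reduces $\sum_e \int_0^{2l_e} |u_e'|^2\,d\xi$ to a boundary sum proportional to $\sum_v \la \mathbf{u}|_{V_v},\mathbf{u}'|_{V_v}\ra_{V_v}$, and \eqref{eqn bc1}--\eqref{eqn bc2} place the two factors in the orthogonal $\pm 1$-eigenspaces of $S_v(0)$, so the sum vanishes. Thus each $u_e$ is constant, forcing $\mathbf{u}'=0$ and $\sigma\mathbf{u}=\mathbf{u}$; combined with $S(0)\mathbf{u}=\mathbf{u}$ from \eqref{eqn bc1} this places $\mathbf{u}\in\calP_0$. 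The converse is immediate, so the $0$-eigenspace has dimension $\dim\calP_0$, matching the multiplicity of the zero $b_k$.

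The main obstacle I anticipate is the $\beta=0$ case, since the naive limit $\beta\to 0$ of the secular equation gives $\Ker(\Id-\sigma S(0))$, which in general strictly contains $\calP_0$, so a purely algebraic continuation from $\beta>0$ would overcount. The energy identity above is what enforces the extra constraint $(\Id-\sigma)\mathbf{u}=0$ and pins the $0$-multiplicity down to $\dim\calP_0$ exactly.
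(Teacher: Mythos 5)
Your proposal is correct. It takes a somewhat different, more computational route than the paper. The paper's proof embeds the metric graph into the thin-manifold framework by treating it as $X^1_G$ with vertex and edge manifolds degenerated to points, derives the secular equation via Lemma \ref{lemmabcE}, and then argues \emph{backward}: it determines that the graph's scattering matrix must satisfy $S_G(\alpha)=S(0)$ for all $\alpha$ and invokes Lemma \ref{lemma scatt subspace} (the last statement) to read off that this forces the vertex boundary conditions \eqref{eqn bc1}--\eqref{eqn bc2}, with Lemma \ref{lemmabcEthreshold} handling the zero eigenvalues. Your argument runs \emph{forward} and is self-contained at the graph level: from \eqref{eqn bc1}--\eqref{eqn bc2} you derive via the plane-wave ansatz that the amplitudes satisfy $\mathbf{b}=S(0)\mathbf{a}$ at the vertices and $\mathbf{a}=e^{2i\beta L}\sigma\mathbf{b}$ along the edges, hence $\mathbf{a}\in\Ker(I-e^{2i\beta L}\sigma S(0))$, matching the definition of the $z_i$ and their multiplicities. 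For $\beta=0$ you rightly observe that the naive limit of the secular equation gives $\Ker(I-\sigma S(0))\supseteq\calP_0$, which in general is strictly larger, and your energy/integration-by-parts argument (using orthogonality of the $\pm1$-eigenspaces of $S_v(0)$ to kill the boundary terms, forcing each $u_e$ constant and hence $\sigma\mathbf{u}=\mathbf{u}$ on top of $S(0)\mathbf{u}=\mathbf{u}$) is a genuinely different and rather clean substitute for the paper's appeal to Lemma \ref{lemmabcEthreshold}. The paper's approach buys conceptual unity (the quantum graph is literally a degenerate thin manifold, and one reuses scattering lemmas already proved); yours buys directness and an elementary treatment of the threshold eigenvalue.
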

\begin{remarks}\mbox{}
\begin{enumerate}
\item If, for some edge $e$, the lowest eigenvalue on $Y_e$ is bigger than $\nu$, then $\calN_e=\{0\}$, so $u_e$ has to be identically zero, which means that the edge $e$ may be omitted from $G$. In other words, only the 'thickest' edges contribute substantially to the eigenvalue $\mu_k(\epsilon)$, for small $\epsilon$.
\item
If $Y_e$ is connected then $\calN_e$ is one-dimensional, so it can be identified with $\C$. But in general there is no canonical eigenfunction, so this identification is not canonical. However, if all $Y_e$  are the same connected Riemannian manifold (for example, in the case of an $\epsilon$-neighborhood of an embedded graph, where they are balls) one may choose the same basis element in each $\calN_e$, so one may think of $u$ as a complex valued function and $V_v\cong\C^{\deg(v)}$ where $\deg(v)$ is the degree of $v$.
\item
Two special cases deserve to be mentioned:
\begin{itemize}
\item Dirichlet conditions at the vertex $v$ correspond to $S_v(0)=-\Id$. In particular, if $S(0)=-\Id$ then the limit quantum graph is completely decoupled (Dirichlet boundary value problem on each edge separately, without interaction between edges).
\item
If all $\calN_e$ are identified with $\C$ and $S_v(0)$ has $(+1)$-eigenspace spanned by the vector $(1,1,\dots,1)$ then \eqref{eqn bc1} says that $u$ is continuous at $v$ and \eqref{eqn bc2} (which is equivalent to $(\partial_n u_e(v))_{e\sim v}\perp (1,\dots,1)$) that the sum of normal derivatives of the $u_e$ vanishes at $v$. This is often called the 'Kirchhoff boundary condition'.
\end{itemize}
\item
Theorems \ref{maintheorem_Gepsilon} and \ref{mainthmquantumgraph} show clearly the two ingredients that determine the leading behavior of eigenvalues:
\begin{itemize}
\item
The determination of $S(0)$ and of the $L^2$-eigenvalues $\tau_k$ is a transcendental problem, depending on the vertex and edge manifolds (for example, in the situation of the $\epsilon$-neighborhood of an embedding, the angle at which edges meet).
\item
Given $S(0)$, the $b_k$ are determined solely by the combinatorics of the underlying metric graph.
\end{itemize}
In the special case of pure Neumann boundary conditions and of Dirichlet conditions with 'small' vertex manifolds  the leading behavior of eigenvalues is determined by the metric graph alone, see Theorem \ref{thm special cases}. That is, it is independent of the manifolds $X_v,Y_e$. This may be seen as reason why the general case is harder to analyze.
These cases were known previously, see \cite{KucZen:CSMSCG}, \cite{RubSch:VPMCTSI}, \cite{ExnPos:CSGLTM} and \cite{Pos:BQWGDBCDC}.

It is known that 'usually' (for example in the embedded situation) there are $L^2$-eigenvalues $\tau<\nu$, so they can not be neglected. See, for example, \cite{SchRavWyl:QBSCUSCW}, \cite{AviBesGirMan:QBSOG}.

\end{enumerate}
\end{remarks}

Using Theorem \ref{mainthmquantumgraph} one can show that for generic geometric data (for example, an open dense set of Riemannian metrics on the vertex and edge manifolds) one has $S(0)=-\Id$, that is decoupled Dirichlet conditions for the quantum graph at all vertices. This will be pursued in a separate paper.

\subsection{Outline of the proof of the Main Theorem}
We now give an outline of the proof of Theorem \ref{maintheorem1}, where for simplicity we assume that all $l_e=1$:
Let $x$ be a coordinate on the cylindrical part of $X^\infty$ and of $\XNG$, measuring length along the cylinder axis (going from $0$ to $N$ from both ends of the cylinder axis).

The proof consists of two steps: First, we use the spectral data on $X^\infty$ to construct approximate eigenfunctions on $X^N_G$ for large $N$ and conclude the existence of eigenvalues and eigenfunctions as claimed. Second, we show that all eigenfunctions on $X^N_G$ are obtained this way.

For clarity we assume in this outline that all multiplicities (i.e.\ dimensions of $\calE_p$ and $\calP_z$) are equal to one and that $\tau_p<\nu$ for all $p$ (no embedded eigenvalues). We will call a quantity 'very small' if it is exponentially small as $N\to\infty$, i.e. $O(e^{-cN})$ for some $c>0$.

Throughout, eigenvalues on $\XNG$ will be denoted $\mu$, with $\mu=\nu+\beta^2$ if $\mu\geq\nu$, and spectral values for $\Xinfty$ by $\lambda$, with $\lambda=\nu+\alpha^2$ if $\lambda\geq\nu$.

{\bf Case I:} Eigenvalues  $\mu<\nu-c$: Here, everything is quite straightforward.
\begin{list}{}{}
\item
{\em First step:} If $U$ is an eigenfunction on $\Xinfty$ with eigenvalue $\lambda=\tau_p<\nu$ then $U$ decreases exponentially in $x$. So if we simply cut off $U$ smoothly near $x=N$, making it zero near $x=N$, then we get a function on $X^N_G$ which satisfies the eigenfunction equation up to a very small error. A simple spectral approximation lemma, Lemma \ref{lemmaSAL}, shows that
$-\DeltaXNG$ has an eigenvalue very close to $\lambda$. A spectral gap argument gives a similar approximation for the eigenfunction.
\item
{\em Second step:} This procedure can be reversed: If $u$ is an eigenfunction on $\XNG$ with eigenvalue $\mu<\nu-c$ then it decreases exponentially in $x$, so cutting it off near $x=N$ yields a function on $\Xinfty$ which satisfies the eigenfunction equation up to a very small error. Since $-\DeltaXinfty$ has purely discrete spectrum near $\mu$, it follows by the spectral approximation lemma that it has an eigenvalue very close to $\mu$. Therefore, there can be no eigenfunctions on $\XNG$ in addition to those constructed in the first step.
\end{list}

{\bf Case II:} Eigenvalues $\mu\in (\nu+e^{-cN},\lambdamax]$: A basic observation is that any eigenfunction $u$ on $\XNG$ or generalized eigenfunction on $\Xinfty$ with eigenvalue $\lambda\in(\nu,\nu_1)$ can on the cylindrical part be written as $\Pi u+\Piperp u$, where $\Pi u$, called the leading part, is the first mode in the $Y$-direction. $\Pi u$ is of the form
\begin{equation}\label{eqn leading part}
\Pi u = e^{-i\alpha x}\phi + e^{i\alpha x}\psi,\quad \phi,\psi\in V,\quad \alpha:=\sqrt{\lambda-\nu}
\end{equation}
and  $\Piperp u$ is exponentially decreasing in $x$.
\begin{list}{}{}
\item
{\em First step:}
The scattering solutions $E=E_{\alpha,\phi}$ have leading part of the form \eqref{eqn leading part} with $\psi=S(\alpha)\phi$. Since $\Pi E$ is not decaying as $x\to\infty$, we should only cut off $\Piperp E$ near $x=N$, in order to obtain a small error term when constructing an approximate eigenfunction on $\XNG$ from $E$. Therefore, we must require $\Pi E_{\alpha,\phi}$ to satisfy matching conditions at $x=N$ that make it a smooth function on the cylindrical part of $\XNG$.
A short calculation shows that this is equivalent to the equation
\begin{equation}
\label{eqn M def}
M(\alpha,\phi)=0,\quad \text{where } M(\alpha,\phi):= \left[\Id-e^{i\alpha N2L}\sigma S(\alpha)\right] \phi \in V.
\end{equation}
Perturbation theory gives functions $Z_i^\rho$ and spaces $\calP_i^\rho$ so that the solutions of this equation are $\alpha=\frac 1{N} Z_i^\rho(\frac 1{N})$  and $\phi\in \calP_{i}^\rho (\frac1{N})$. So for these $\alpha,\phi$ the function obtained from $E=E_{\alpha,\phi}$ by cutting of $\Piperp E$ near $x=N$ satisfies the eigenvalue equation on $\XNG$ with a very small error. Therefore, one gets eigenvalues as in \eqref{eqn alpha-formel}.
\item
{\em Second step:} It remains to show that for large $N$ all eigenvalues $\mu\in(\nu+e^{-cN},\lambdamax]$ on $\XNG$ are obtained in this way. This is the theoretically most demanding part of the proof. Let $u$ be an eigenfunction of $-\DeltaXNG$ with eigenvalue $\mu=\nu+\beta^2, \beta>e^{-cN/2}$. An argument directly analogous to the case $\mu<\nu$ won't work since it only yields that $-\DeltaXinfty$ has nonempty spectrum near $\mu$, which we know anyway.
Rather, we need to show that $u$ is very close to some $E=E_{\alpha,\phi}$ with $(\alpha,\phi)$ satisfying \eqref{eqn M def} and with $\alpha$ very close to $\beta$. The closeness of $u,E$ would follow from closeness of the leading parts $\Pi u,\Pi E$ since they control the full function, and this is proved in two steps: First, we prove an elliptic estimate which reflects the essentials of scattering theory in a {\em compact } elliptic problem and implies that eigensolutions whose $\Piperp$ part decays exponentially for $x\leq N$ (as $\Piperp u$ does) are close to scattering solutions (whose $\Piperp$ part decays exponentially for $x\to\infty$) with the same eigenvalue, see Lemmas \ref{lemma basic ell estimate} and \ref{lemma eigenfcn est}. This is a stable version of the existence and uniqueness of scattering solutions on $\Xinfty$. So we obtain some $E_{\beta,\phi_0}$ very close to $u$. In a second step it is shown that $(\beta,\phi_0)$ must be very close to a solution of \eqref{eqn M def}: Since $u$ satisfies the matching conditions at $x=N$ and $u-E$ is very small, $E$ almost satisfies these conditions, so $M(\beta,\phi)$ is very small; therefore, all that is needed is a stable version of the analysis of \eqref{eqn M def} (which, however, is quite non-trivial).
\end{list}

{\bf Case III:} Eigenvalues $\mu\in[\nu-c,\nu+e^{-cN}]$: While the first step (construction of approximate eigenfunction) presents no new difficulties, the second step (proof that all eigenfunctions are obtained) is quite delicate. One difficulty is that the representation \eqref{eqn leading part} is not valid at $\alpha=0$ (it does not give all scattering solutions). There is a straight-forward replacement, however. More serious is showing that any eigenvalue $\mu\in[\nu-c,\nu)$ must actually be in $(\nu-e^{-cN},\nu)$. This again requires a delicate stability analysis of the matching condition.

\medskip

Special care needs to be taken when the multiplicities are not equal to one, since it is not enough just to construct eigenvalues, one also needs sufficiently many. A useful tool here is the notion of distance between subspaces of a vector space which we recall in Section \ref{subsec dist subspaces}.

If one is only interested in the case of fixed $k$ as in Theorem \ref{maintheorem_Gepsilon} then some of the proofs can be simplified considerably since the functions $Z_i^\rho$ remain separated for different $i$ then. See, for example, Corollary \ref{cor bounded i}.
In particular the proof of Lemma \ref{lemma stability} simplifies considerably in this case, and Theorem \ref{thm unitary perturb} is not needed.

\subsection{Outline of the paper}
In Section \ref{sec setup} we introduce the setup precisely and some notation. Section \ref{sec basics analysis} introduces the basic analytic tools, most importantly separation of variables and distance between subspaces, as well as a basic spectral approximation lemma. In Section \ref{sec scatt} we recall the facts from scattering theory that we need. In Section \ref{sec scatt matching} we analyze which scattering solutions satisfy the matching conditions, in particular, equation \eqref{eqn M def}. The basic elliptic estimate and some consequences are proven in Section \ref{secestef}. Theorem \ref{maintheorem1} and improvements of it are proven in Section \ref{sec main proofs}, and Theorem \ref{mainthmquantumgraph} in Section \ref{sec quantum graph}, where we also discuss some special cases.

In the Appendix we collect some basic results on one-parameter families of unitary operators which are needed in the analysis of \eqref{eqn M def}.  Their proofs can be found in \cite{Gri:MUF}.

\subsection{Related work}
\label{subsec previous}
As already mentioned, the Neumann problem was treated in \cite{Col:MPVPNNL}, \cite{FreWen:DPGAP}, \cite{KucZen:CSMSCG}, \nocite{KucZen:ASNLTD} \cite{RubSch:VPMCTSI}, \cite{ExnPos:CSGLTM}. For Dirichlet boundary conditions, Post \cite{Pos:BQWGDBCDC} derived the first two terms of \eqref{eqnhighev Gepsilon} in the case of 'small' vertex neighborhoods, see Theorem \ref{thm special cases}.
In the recent preprint  \cite{MolVai:LONTFSNT} Molchanov and Vainberg study the Dirichlet problem and show that, in the context of Theorem \ref{maintheorem_Gepsilon}, the $\mu_k(\epsilon)-\epsilon^{-2}\nu$ converge to eigenvalues of the quantum graph described in Theorem \ref{mainthmquantumgraph}; this was conjectured in \cite{MolVai:SSNTFSDA}, where also some results on the scattering theory on non-compact graphs are obtained. However, their statements are unclear as to whether the multiplicities coincide; also, they do not consider the effect of $L^2$ eigenvalues on $X^\infty$ or uniform asymptotics for large $k$.
In \cite{AlbCacFin:CSLTQW} a related model is considered. The method in the previously cited papers is to compare quadratic forms or to show resolvent convergence of some sort, and in all cases only the leading asymptotic behavior is obtained.

Problems of the same basic analytic structure (cylindrical neck stretching to infinity, attached to fixed compact ends; usually with $G$ consisting of one edge only) were studied by various authors in the context of global analysis, where they occur in a method to prove gluing formulas for spectral invariants. In their study of analytic torsion (related to the determinant of the Laplacian), Hassell, Mazzeo and Melrose \cite{HasMazMel:SFMWCCT} gave a very precise description of the resolvent (in the case of closed manifolds, i.e.\ no boundary, but admitting edge neighborhoods which are not precisely cylindrical, just asymptotically), including its full asymptotic behavior as $\epsilon\to 0$, using ideas of R.\ Melrose's 'b-calculus', a refined version of the pseudodifferential calculus.  More direct approaches were used by Cappell, Lee and Miller \cite{CapLeeMil:SAEOMDILES} and by M\"uller \cite{Mue:EIMWB} in the study of the $\eta$-invariant (for the Dirac operator instead of the Laplacian) and by Park and Wojciechowski \cite{ParWoj:ADZDDNO}. The author and Jerison \cite{GriJer:AEPD} prove a special case of Theorem \ref{maintheorem1} (where $\XNG$ is a plane domain obtained by attaching a long rectangle to a fixed domain, which is required to have width at most the width of the rectangle) by a different method (matched asymptotic expansions) and use it to prove a result about nodal lines of eigenfunctions; for this, one needs to know the asymptotic behavior to second order (i.e. one order more than written explicitly in \eqref{eqnhighev Gepsilon}).

In the context of this literature, the main purpose of the present paper is to give a mathematically rigorous yet straightforward derivation of the limiting problem on the graph, allowing various boundary conditions, in a way that admits generalization to similar problems. For example, there are straightforward generalizations to higher order operators, systems and Schr\"odinger operators with potential, as long as one has a product type structure along the edges.
We use existence of the scattering matrix for manifolds with cylindrical ends  as a 'black box'. The main technical problem is the proof that all eigenfunctions are obtained by the given construction; here the neighborhood of the threshold $\nu$ requires a special effort (this is sometimes referred to as the problem with 'very small eigenvalues' since the eigenvalues are exponentially close to $\nu$).

\medskip

{\bf Notation:}
As usual constants $c,C>0$ may have different values at each occurence (unless otherwise stated).
They depend on the data (the graph, the edge lengths, the compact manifolds), but not on $N$.
The $L^2$ scalar product on a space $X$ is denoted by $\langle u,v\rangle_X=\int_X u\overline v$ and the $L^2$ norm by $\|\cdot\|_X$.

\section{The setup: Combinatorial and geometric data} \label{sec setup}
The following data are given:
\begin{itemize}
\item
{\bf Combinatorial data:}

A finite graph $G=(V,E)$, with vertex set $V=V(G)$,  edge
set $E=E(G)$. Loops and multiple edges are allowed. Thus,
$E$ may be thought of as a multiset of unordered pairs of
vertices.
If the vertex $v$ is adjacent to an edge $e$, we write $v\sim e$.
A {\em half-edge} is a
pair $(v,e)$ with $v\sim e$, and we denote by
$\Ebar$ the set of half edges, except that for a loop $e$ at a vertex $v$ the element
$(v,e)$ appears twice in $\Ebar$ (so formally $\Ebar$ is really a multiset).
Sometimes we denote
a half-edge by $\ebar$ if it arises from the edge $e$.
The {\em neighborhood} of a vertex $v$ is the (multi-)set of half-edges incident
to it.

It will be useful to think of $G$ as the disjoint union of the
vertex neighborhoods, with the 'ends' of the half-edges
glued together appropriately. The glueing may be encoded
by a map $\sigma$, where
\begin{align}\label{sigma}
\sigma: \Ebar & \to \Ebar \\
(v_1,e) & \mapsto (v_2,e)
\end{align}
if the edge $e$ connects $v_1$ and $v_2$ (for a loop $e$, $\sigma$ maps one copy of $(v,e)$ to the other).
%(BETTER: For simplicity, omit loops, do a footnote once on how to include them.)
$\sigma$ is an involution, that is $\sigma^2=\Id$,  and has no fixed points.

We also assume that a positive number $l_e$ is  given for each edge $e$,
to be thought of as half the length of $e$. Denote the shortest half edge length by
$$\lmin = \min_e l_e.$$

\item
{\bf Geometric data:}

\begin{itemize}
\item
To each vertex $v$ a compact Riemannian manifold $(X_v,g_{X_v})$ with piecewise smooth boundary%
\footnote{for an exact definition of this, see for example \cite{HasZel:QEBVE}; for our purposes one may assume that the data are such that the spaces $X^N$ defined below are smooth manifolds with boundary.},
of dimension $n$
\item
to each edge $e$ a compact Riemannian manifold $(Y_e,g_{Y_e})$ with smooth boundary, of dimension $n-1$; for a half edge $\ebar$ corresponding to an edge $e$, we set $Y_\ebar:= Y_e$;
\item
to each half-edge $(v,e)$ an isometry $h_{v,e}$ (gluing map) from $Y_e$ to a subset of the boundary of $X_v$; we assume that $(X_v,g_{X_v})$ is of product type near $h_{v,e}(Y_e)$, see below; also, for each $v$ the sets $h_{v,e}(Y_e)$ are assumed to be disjoint (no overlaps of different edges);
\item
partitions of the boundary of each $Y_e$ and of the part of the boundary of each $X_v$ which is not
in the image of any of these isometries, into two pieces denoted by indices D and N (for Dirichlet and Neumann boundary conditions); we assume sufficient regularity of this decomposition so that the boundary value problems formulated below (before \eqref{eqnlaplaceproduct}) are well-posed.

More generally, one may give a pair of non-negative functions $a,b$ (of sufficient regularity to make the problem below well-posed) on the boundary of each $X_v$ (outside the gluing part) and each $Y_e$, with $a,b$ never vanishing simultaneously, to define Robin boundary conditions, see below. The D/N decomposition corresponds to $a,b$ being characteristic functions of a partition of these boundaries into two parts.
\end{itemize}
\end{itemize}
From this data, we define Riemannian manifolds $X^N,$ $0<N\leq\infty$, with piecewise smooth boundary, as follows: First, for each half-edge $\ebar=(v,e)$ we attach a cylinder with cross section $Y_\ebar$,
$$ Z^N_\ebar := [0,N)\times Y_\ebar$$
to $X_v$, using the isometry $h_{v,e}$.
Thus, for each vertex $v$
we get a manifold with piecewise smooth boundary
$$ X_v^N := \left[ X_v \cup %Z_v^\infty,\quad Z_v^\infty =
\bigcup_{\ebar:\,\ebar\sim v} Z^N_\ebar \right] / \{h_{v,e}\}_{v\sim e}$$
where the quotient means that each $\{0\}\times Y_\ebar$ is
identified isometrically with a subset of the boundary of $X_v$.

On $Z^N_{\ebar}$ we put the cylindrical Riemannian metric
\begin{equation}
\label{eqnmetriconZ}
g_{Z^N_\ebar}=l_e^2\, dx^2 + g_{Y_e}.
\end{equation}
By assumption, $(X_v,g_{X_v})$ is of product type near $Y_\ebar':=h_{v,e}(Y_e)$, which means that a neighborhood of $Y_\ebar'$ in $X_v$ is isometric to $(0,\epsilon)\times Y_\ebar'$ with the product metric, for some $\epsilon>0$. This ensures that the metrics on $X_v$ and $Z^N_\ebar$ define a smooth Riemannian metric on $X^N_v$ (it also fixes the smooth structure on $X^N_v$).

Let
$$X^N := \bigcup_{v\in V} X_v^N$$ (disjoint union) and denote by
$$ Y := \bigcup_{\ebar\in\Ebar} Y_\ebar,\quad Z^N:=[0,N)\times Y = \bigcup_{\ebar\in\Ebar} Z^N_\ebar$$
the cross section resp.\ the cylindrical part of $X^N$.

For finite $N$ the pieces $X_v^N$ are now glued together as prescribed by the graph $G$ to give
the $N$-neighborhood, $X^N_G$, of $G$. More precisely,
$\sigma:\Ebar\to\Ebar$ induces bijections, also denoted $\sigma$, $Y_\ebar\to Y_{\sigma(\ebar)}$ (since both of these cross sections are just copies of the same $Y_e$), and therefore $\{N\}\times Y_\ebar\to \{N\}\times Y_{\sigma(\ebar)}$ for each $\ebar\in \Ebar$, and then
\begin{equation}
\label{eqn def XNG}
X^N_G := \overline{X^N} / \sigma
\end{equation}
where $\overline{X^N}$ is analogous to $X^N$, except that $x<N$ is replaced by $x\leq N$.
In other words, $X^N_G$ is the union of the $X_v$ and cylinders $[0,2N]\times Y_e$
for each edge $e$ of $G$, glued together according to the structure of $G$.
(But our $x$ coordinate will run between $0$ and $N$ from both ends of the interval.)
We also write
$$ Z^N_G := \overline{Z^N} / \sigma.$$

Clearly, the D/N decomposition of the boundaries of each  $X_v$ and of each $Y_e$
gives a corresponding decomposition of the boundary of each $X^N$, of $X^N_G$ and $Y$, and $Z^N$.
The Riemannian metrics define Laplace operators $\Delta_{X^N},\Delta_{X^N_G},\Delta_Y,\Delta_{Z^N}$ on these spaces, for which we impose Dirichlet boundary conditions on the D part of the boundary and Neumann
boundary conditions on the N part (but no boundary condition at the boundary piece $\{0\}\times Y$ of $Z^N$).
More generally, one may consider Robin boundary conditions, $a u + b\partial_n u=0$, where the functions $a,b$ on $\partial X^N$ (etc.) are induced by those on the boundaries of $X_v$, $Y_e$, by making them independent of the cylinder coordinate $x$.

By \eqref{eqnmetriconZ},
\begin{equation}\label{eqnlaplaceproduct}
\Delta_{Z^N_\ebar}=l_e^{-2}\frac{\partial^2}{\partial x^2} + \Delta_{Y_e}.
\end{equation}

\begin{remark}
The Riemannian metric \eqref{eqnmetriconZ} expresses the fact that the cylindrical part $Z^N_\ebar$ has length $Nl_e$. Instead, one could use the coordinate $\xi= x l_e$, then one would get the more standard form
\begin{equation} \label{eqnLaplace-in-xi}
g_{Z^N_\ebar}=d\xi^2+g_{Y_e},\quad \Delta_{Z^N_e} = \frac{\partial^2}{\partial\xi^2} + \Delta_{Y_e}, \quad \xi=x l_e,
\end{equation}
and $0\leq\xi<l_e N$ on $Z^N_\ebar$. The $\xi$-coordinate is more convenient for some calculations, but in the $x$-coordinate $Z^N_\ebar$ is given by the same range of $x$, $0\leq x<N$, for all $\ebar$. We switch between these coordinates as is convenient.
\end{remark}

\section{Basics of the analysis} \label{sec basics analysis}
\subsection{Notation}
Since $Y$ is compact, the Laplacian $\Delta_Y$ has compact resolvent and therefore discrete spectrum. Let
$\nu_0<\nu_1<\dots$ be the eigenvalues of $-\Delta_Y$, with finite dimensional
eigenspaces $V_0,V_1,\dots$. We also write
$$ \nu:=\nu_{0},\quad V:= V_{0}.$$
Let $\Pi$ be the orthogonal projection to $V$ in $L^2(Y)$, and $\Pi^\perp = \Id_{L^2(Y)} -\Pi$.

Let $N\in (0,\infty]$.
We decompose any $u\in L^2_{\loc}(X^N)$ in its 'vertical' $\Pi$ and $\Pi^\perp$ components, over the cylindrical part $Z^N$:
$$ u_{|Z^N} = \Pi u + \Pi^\perp u$$
where $ (\Pi u)(x) := \Pi(u(x,\cdot))$ and similarly for $\Piperp$. Here and throughout the paper we identify functions on $Z^N$ with functions on $[0,N)$ whose values are functions on $Y$. In particular,
$$ \Pi u: [0,N)\to V,\quad \Piperp u:[0,N)\to V^\perp.$$

Since $Y=\bigcup_{\ebar\in\Ebar}Y_{\ebar}$ and $\Delta_{Y}$ acts on each $Y_\ebar$ separately, we have
\begin{equation}
\label{eqn splitting}
L^2(Y)=\bigoplus_{\ebar\in\Ebar} L^2(Y_{\ebar}),\quad
V=\bigoplus_{\ebar\in\Ebar} V_{\ebar},\quad V_{\ebar} := \ker (\Delta_{Y_{\ebar}}+\nu).
\end{equation}
We write elements $\phi\in L^2(Y)$  as $\phi = (\phi_{\ebar})_{\ebar\in\Ebar}$ with $\phi_{\ebar}\in L^2(Y_\ebar)$. If
$\phi\in V$ then $\phi_\ebar\in V_\ebar$ $\forall \ebar$.

The data define two important linear maps $L,\sigma:L^2(Y)\to L^2(Y)$ which restrict to linear maps $L,\sigma:V\to V$:

The map $L$ is diagonal with respect to the splitting \eqref{eqn splitting} and encodes the edge lengths
\begin{equation}
\left.
\begin{array}{rrl}
L:&L^2(Y) &\to L^2(Y)\\
L:&V      &\to V
\end{array}
\right\}
,\
 (\phi_{\ebar}) \mapsto (l_{e}\phi_{\ebar}).
\end{equation}

The map $\sigma$ is defined by the involution $\sigma:\Ebar\to\Ebar$ encoding the graph structure:
\begin{equation}
\left.
\begin{array}{rrl}
\sigma :&L^2(Y) &\to L^2(Y)\\
\sigma :&V      &\to V
\end{array}
\right\}
,\ (\phi_\ebar) \mapsto (\phi_{\sigma(\ebar)}).
\end{equation}
We denote
\begin{equation}
\label{eqn V^pm def}
V^\pm := \text{ the }\pm 1\text{ eigenspace of }\sigma.
\end{equation}
Since $\sigma$ is a self-adjoint involution, the decomposition $V=V^+\oplus V^-$ is orthogonal.
For $\phi\in V$ we denote the corresponding decomposition $\phi=\phi^++\phi^-$.

By \eqref{eqnlaplaceproduct} and \eqref{eqnLaplace-in-xi}, we have
\begin{align}\label{eqn-laplace-on-Z}
\Delta_{Z^N} &= L^{-2} \frac{\partial^2}{\partial x^2} + \Delta_Y\\
  & = \bigoplus_{k=0}^\infty \left( L^{-2}\frac{d^2}{dx^2} - \nu_k \right) = \bigoplus_{k=0}^\infty \left( \frac{d^2}{d\xi^2} - \nu_k \right)
      \label{eqn-laplace-decomposed}
\end{align}
with respect to the decomposition $L^2(Y)=\bigoplus_{k=0}^\infty V_k$.

\subsection{Matching conditions at $x=N$}
We regard functions on $X^N_G$ as functions on $X^N$ satisfying suitable matching conditions at $x=N$. For solutions of the eigenfunction equation we get:
\begin{lemma} \label{lemmabc}
Let $u$ be an eigenfunction of  $-\Delta_{X^N}$ with eigenvalue $\lambda$.

Then $u$ defines an eigenfunction of $-\DeltaXNG$ if and only if
$u$ extends smoothly to $x=N$ and
\begin{equation}
\label{eqn matching u}
u^-=0,\quad (\partial_\xi u)^+=0,\quad\text{ at }x=N.
\end{equation}
The upper $\pm$ refer to the $\sigma$-decomposition \eqref{eqn V^pm def}.
\end{lemma}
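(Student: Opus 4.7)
The plan is to interpret smooth descent of $u$ from $X^N$ to $X^N_G$ as two matching conditions across the seam $\{x=N\}$, and then read these off in the $\sigma$-eigenspace decomposition $V = V^+ \oplus V^-$ (extended in the obvious way to all of $L^2(Y)$). Since $u$ satisfies $(-\Delta_{X^N}-\lambda)u=0$ with the prescribed boundary conditions, it is smooth in the interior and up to the non-cut boundary. Because the gluing in \eqref{eqn def XNG} is performed along a single cross section $\{x=N\}\times Y$ with both sides carrying the same product metric \eqref{eqnmetriconZ}, the metric on $X^N_G$ is smooth across the seam. Thus $u$ descends to an element of the domain of $-\DeltaXNG$ if and only if its values and first normal derivatives agree at the seam; higher derivatives then match automatically by elliptic regularity applied to $(-\DeltaXNG-\lambda)u=0$.

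The one subtlety is the orientation of the local cylinder coordinate. For each edge $e$, I would parametrize the full edge cylinder by a global arclength coordinate $\eta\in[0,2l_eN]$, with $\eta=\xi$ on the half $\bar e$ and $\eta=2l_eN-\xi$ on the opposite half $\sigma(\bar e)$. Then $\partial_\eta=\partial_\xi$ from the $\bar e$-side while $\partial_\eta=-\partial_\xi$ from the $\sigma(\bar e)$-side. Writing $u_{\bar e}$ for the component of $u$ on $Z^N_{\bar e}$ and identifying $Y_{\bar e}$ with $Y_{\sigma(\bar e)}$ canonically, $C^0$- and $C^1$-matching at the seam become
\begin{align*}
u_{\bar e}(N,\cdot) &= u_{\sigma(\bar e)}(N,\cdot), \\
(\partial_\xi u)_{\bar e}(N,\cdot) &= -(\partial_\xi u)_{\sigma(\bar e)}(N,\cdot),
\end{align*}
for every half edge $\bar e$.

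Globally, using the direct sum \eqref{eqn splitting}, the first line says $\sigma u=u$ at $x=N$, i.e.\ $u^-=0$, and the second says $\sigma(\partial_\xi u)=-\partial_\xi u$ at $x=N$, i.e.\ $(\partial_\xi u)^+=0$; these are exactly \eqref{eqn matching u}. The converse is the same calculation read in reverse: given an eigenfunction on $X^N$ that extends smoothly to $x=N$ and satisfies \eqref{eqn matching u}, the reassembled function lies in the domain of $-\DeltaXNG$ and solves the eigenvalue equation. The only real obstacle is bookkeeping the sign reversal in the second condition arising from the orientation flip of $\xi$ across the seam --- this is precisely what routes the derivative matching into $V^-$ rather than $V^+$.
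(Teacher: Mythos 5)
Your proof is correct and follows essentially the same route as the paper's: reduce smooth descent across the seam $\{x=N\}$ to $C^0$/$C^1$ matching (higher-order matching being automatic for solutions of the eigenvalue equation), and observe that the orientation reversal of the cylinder coordinate across the seam turns derivative matching into the sign-flipped condition $\Psi_{\bar e}=-\Psi_{\sigma(\bar e)}$, i.e.\ $(\partial_\xi u)^+=0$. You spell out the orientation bookkeeping via the global coordinate $\eta$, which the paper leaves implicit, but the substance is identical.
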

For $\alpha>0$ let
\begin{equation}
\label{eqn def MC alpha}
\begin{gathered}
\BC_\alpha(u) := \Pi u^-_{x=N} + \frac i\alpha \Pi(\partial_\xi u)^+_{x=N} \in V.
\end{gathered}
\end{equation}
Then  $\BC_\alpha(u)=0$ if and only if $\Pi u$ satisfies the matching conditions at $x=N$.
The particular scaling in \eqref{eqn def MC alpha} is motivated by the calculation \eqref{eqn BCEcalc}.
% old version: \left( (I-\sigma) u_{x=N}, (I+\sigma)(\partial_\xi u)_{x=N} \right) \in L^2(Y)\times L^2(Y).
\begin{proof}
This is just the fact that the solution to a second order elliptic partial differential equation can be continued across a hypersurface, as a solution, if and only if at the hypersurface it is continuous and its normal derivatives match. Set
$\Phi=u_{x=N}$, $\Psi=(\partial_\xi u)_{x=N}$. Then $\Phi^-=\frac12(\Phi-\sigma\Phi)$ and $\Psi^+=\frac12(\Psi+\sigma\Psi)$, so \eqref{eqn matching u} means $\Phi_\ebar  = \Phi_{\sigma(\ebar)}$, $\Psi_\ebar = -\Psi_{\sigma(\ebar)}$ for all $\ebar$, that is, continuity of $u$ and of $\partial_\xi u$ at $x=N$ in $\XNG$.
\end{proof}

For a function on $X^N$, when we write $u_{x=N}$ we always assume that $u$ extends smoothly to $x=N$.

\subsection{Separation of variables}
The following simple lemma is basic to all the analysis.

\begin{lemma}\label{lemmasepvar}
Let $0<N\leq\infty$. Let $u\in C^2(Z^N)$ satisfy $(\Delta_{Z^N} + \lambda)u = 0$.
\begin{itemize}
\item[(a)] The leading part of $u$ has the form
\begin{align}
 \Pi u & = e^{-\sqrt{\nu-\lambda}\xi}\phi + e^{\sqrt{\nu-\lambda}\xi}\psi
                             & \text{ if } \lambda<\nu, \label{eqn1}\\
 \Pi u & = e^{-i\alpha\xi}\phi + e^{i\alpha\xi}\psi
                             & \text{ if } \lambda>\nu,\ \lambda=\nu+\alpha^2  \label{eqn2}\\
 \Pi u & = \phi + \xi\psi & \text{ if } \lambda=\nu, \label{eqn2a}
\end{align}
with $\phi,\psi\in V$.
(Replace $\xi$ by $xL$ to express this in terms of $x$.)
\item[(b)]
If $\lambda<\nu_1$ then
\begin{equation} \label{eqn3}
 \Pi^\perp u = \sum_{k=1}^\infty e^{-\sqrt{\nu_{k}-\lambda}\xi}\phi_{k} + e^{\sqrt{\nu_{k}-\lambda}\xi}\psi_{k}, \quad \phi_{k},\psi_{k}\in V_{k}
\end{equation}
\item[(c)]
$\phi,\psi$ in (a) and $\phi_{k},\psi_{k}$ in (b) are uniquely determined by $u$. If $N=\infty$ and $u$ is polynomially bounded as $\xi\to\infty$ then $\psi=0$ in \eqref{eqn1} and $\psi_{k}=0$
for all $k$ in \eqref{eqn3}.
\item[(d)]
Assume $N<\infty$ and $u$ extends to a solution on $Z^N_G$, or $N=\infty$ and $u$ is polynomially bounded.
Then there is a constant $C$ such that for all $M\leq N$ we have
\begin{align}
\|u_{x=M}\|_Y + \|(\partial_\xi u)_{x=M} \|_Y &\leq C e^{-cM}\|u\|_{Z^1}\label{expdecaymu<nu}\\
\intertext{if $\lambda<\nu,\text{ where } c= \sqrt{\nu-\lambda}\,\lmin$, and}
\|\Pi^\perp u_{x=M}\|_Y + \|(\Pi^\perp \partial_\xi u)_{x=M}\|_Y &\leq C e^{-cM}\|\Pi^\perp u\|_{Z^1}
\label{expdecaymu>nu}
\end{align}
if $\lambda <\nu_1,\text{ where } c= \sqrt{\nu_1-\lambda}\,\lmin$.
\item[(e)]
If $N<\infty$ and $u$ is an eigenfunction on $Z^N_G$ then, for the representation in \eqref{eqn3},
\begin{equation}
\label{eqn Bperp small}
\|\sum_{k=1}^\infty (\nu_k)^{1/4}\sqrt{\nu_k-\lambda}\psi_k \|_Y \leq C e^{-2cN} \|\Piperp u_{x=0}\|_Y
\end{equation}
if $\lambda<\nu_1$ for any $c<\sqrt{\nu_1-\lambda}\lmin$.
\end{itemize}
\end{lemma}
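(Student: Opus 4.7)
Proof plan.

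Parts (a), (b), (c) are standard separation of variables. Using \eqref{eqn-laplace-decomposed}, the equation $(\Delta_{Z^N}+\lambda)u=0$ decouples into the ODEs
\[
\left(\frac{d^2}{d\xi^2}+(\lambda-\nu_k)\right)u_k=0,\qquad u_k:=P_k u,
\]
where $P_k$ is the orthogonal projection of $L^2(Y)$ onto $V_k$. For $k=0$ we have $\lambda-\nu_k=\lambda-\nu$, whose sign produces the three cases of (a); for $k\ge1$ and $\lambda<\nu_1\le\nu_k$, the coefficient is negative, giving (b). Uniqueness of $\phi,\psi$ (and $\phi_k,\psi_k$) follows from the standard fact that an ODE solution is determined by initial values at $\xi=0$; for polynomial boundedness on $N=\infty$, the mode $e^{+\sqrt{\nu_k-\lambda}\xi}$ grows exponentially and cannot be dominated by any polynomial, forcing the coefficient to vanish.

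For part (d), the decay of $u_{x=M}$ for $\lambda<\nu$: think of the doubled cylinder $[0,2Nl_e]_\xi\times Y_e$ obtained by gluing via $\sigma$ across $\xi=Nl_e$. A single ODE solution $v(\xi)=Ae^{-c\xi}+Be^{c\xi}$ with $c=\sqrt{\nu_k-\lambda}$ lives on this whole interval. The coefficients $A,B$ are determined by $(v(0),v'(0))$, which are controlled by $\|v\|_{L^2([0,l_e])}$ via standard ODE estimates; this gives $|A|\le C\|v\|_{Z^1}$, but only $|B|\le C\|v\|_{Z^1}$, which alone would not yield decay at $\xi=Ml_e$. The improvement comes from re-expanding the same solution from the other endpoint $\xi=2Nl_e$, writing $v(\xi)=A'e^{-c(2Nl_e-\xi)}+B'e^{c(2Nl_e-\xi)}$; comparing coefficients yields $B=A'e^{-2cNl_e}$ and $A=B'e^{2cNl_e}$. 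Since $|A'|$ is itself bounded by $\|v\|_{Z^1}$ (estimating on a unit interval near the far end, which by the $\sigma$-matching has the same norm as on $Z^1$), we obtain $|B|\le Ce^{-2cNl_e}\|v\|_{Z^1}$, and then
\[
|v(Ml_e)|\le |A|e^{-cMl_e}+|B|e^{cMl_e}\le C e^{-cMl_e}\|v\|_{Z^1},
\]
uniformly for $M\le N$. Summing in $k$ with $c\ge\sqrt{\nu-\lambda}\,\lmin$ (case $\lambda<\nu$) or restricting to $k\ge1$ with $c\ge\sqrt{\nu_1-\lambda}\,\lmin$ (case $\lambda<\nu_1$), and handling the $\partial_\xi$ term in the same way, produces the two estimates \eqref{expdecaymu<nu} and \eqref{expdecaymu>nu}. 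The polynomially bounded case $N=\infty$ is immediate from (c) since then $\psi=0$, $\psi_k=0$.

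For part (e), we use the same doubled-cylinder picture applied to each $k\ge1$ mode separately. On the full edge, writing $\Piperp u|_{V_k}=\phi_k e^{-c_k\xi}+\psi_k e^{c_k\xi}$ with $c_k=\sqrt{\nu_k-\lambda}$, the argument above gives $\|\psi_k\|_Y^2\le C e^{-4c_k Nl_e}\|\phi_k'\|_Y^2$ where $\phi_k'$ is the decaying coefficient seen from the far end, and $\|\phi_k'\|_Y$ is controlled by a trace of $\Piperp u$ near the other endpoint, which in turn (again by the doubled-cylinder symmetry through $\sigma$) is controlled by $\|\Piperp u_{x=0}\|_Y$. Inserting the weight and using $c_k\ge \sqrt{\nu_1-\lambda}\,\lmin$:
\[
\Bigl\|\sum_{k\ge1}(\nu_k)^{1/4}\sqrt{\nu_k-\lambda}\,\psi_k\Bigr\|_Y^2=\sum_{k\ge1}\nu_k^{1/2}(\nu_k-\lambda)\|\psi_k\|_Y^2\le C\sum_{k\ge1}\nu_k^{1/2}(\nu_k-\lambda)e^{-4c_k Nl_e}\|\phi_k'\|_Y^2.
\]
For any $c<\sqrt{\nu_1-\lambda}\,\lmin$ the polynomial weight $\nu_k^{1/2}(\nu_k-\lambda)$ is absorbed into the exponential factor for large $k$, losing only an $\epsilon$ in the exponent, and the resulting $\sum_k\|\phi_k'\|_Y^2$ is bounded by $\|\Piperp u_{x=0}\|_Y^2$ by Parseval in $V^\perp$.

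The main obstacle is the global-to-local comparison at the far endpoint used in (d) and (e): one must rule out a large growing-exponential coefficient at $\xi=0$ by exploiting the closure of the edge via $\sigma$, while expressing everything in the single ``local'' norm $\|u\|_{Z^1}$ (respectively $\|\Piperp u_{x=0}\|_Y$) rather than the global norm on $Z^N_G$. Keeping the constants independent of $k$ (so that the weighted sum in (e) converges uniformly in $N$) is the delicate quantitative point.
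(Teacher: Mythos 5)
Your proposal matches the paper's proof in all essentials: parts (a)--(c) are the same separation-of-variables argument mode by mode, part (d) uses the closure of the edge (length $2Nl_e$, glued via $\sigma$) to control the growing-exponential coefficient by data near the far endpoint, and part (e) uses the identification $u_{x=2N}=\sigma u_{x=0}$ (your ``doubled-cylinder symmetry'') to bound $\psi_k$ by $e^{-2c_kN}$ times the near-end trace. The only stylistic difference is that for (d) the paper represents $u_k(M)$ directly by the two-point formula $u_k(M)=\frac{\sinh(M-a)\beta_k}{\sinh(b-a)\beta_k}u_k(b)+\frac{\sinh(b-M)\beta_k}{\sinh(b-a)\beta_k}u_k(a)$ with $a\in(0,1)$, $b\in(2N-1,2N)$, which avoids inverting the ODE to recover the exponential coefficients $A,B,A',B'$ from local $L^2$ data and therefore sidesteps the $k$-dependence of the constant in ``$|A|\le C\|v\|$'' that you correctly flag as the delicate quantitative point (the $\sqrt{c_k}$ growth is harmless and absorbable, but the $\sinh$ formula makes the $k$-uniformity transparent).
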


\begin{proof}
For $k\in\N_0$ let $\Pi_k:L^2(Y) \to V_k$ be the projection and let $u_k=\Pi_k u:[0,N)\to V_k$. By \eqref{eqn-laplace-decomposed}, $u_k$ satisfies the differential equation
\begin{equation}\label{eqn-eveqn-uk}
(\frac{d^2}{d\xi^2}+\lambda-\nu_k)u_k=0,
\end{equation}
and this has the solutions given by the formulas in (a) and by the summands in (b). Clearly, $u_k$ and therefore $\phi,\psi,\phi_k,\psi_k$ are determined by $u$, and if $u$ is polynomially bounded then so is $u_k=\Pi_k u$ for each $k$, so (c) follows. This immediately gives (d) in the case $N=\infty$.
If $N<\infty$ and $u$ extends to a solution on $Z^N_G$ then $u$ may be regarded as function $[0,2N]\to L^2(Y)$. We now use the coordinate $x\in[0,2N]$. If $\nu_k>\lambda$ then one can express the solution of \eqref{eqn-eveqn-uk} by its values at $a,b$, for any $0<a<b<2N$: Write $\beta_k=\sqrt{\nu_k-\lambda}L$, then
$$u_k(x) = \frac{\sinh (x-a)\beta_k}{\sinh (b-a)\beta_k} u_k(b) +
\frac{\sinh (b-x)\beta_k}{\sinh (b-a)\beta_k} u_k(a). $$
For any $a\in (0,1)$ and $b\in (2N-1,2N)$ one obtains easily at $x=M\leq N$:
$$ \|u_k(M)\|_{V_k} \leq C e^{-\sqrt{\nu_k-\lambda}\lmin M}\left( \|u_k(a)\|_{V_k} + \|u_k(b)\|_{V_k}\right).$$
Summing over $k=0,1,2,\dots$ in the case $\lambda<\nu$ yields the estimate on $u_{x=M}$ in \eqref{expdecaymu<nu}. The estimate on the derivative and estimate \eqref{expdecaymu>nu} are obtained similarly.

(e) follows similarly to (d), using $\Piperp u _{x=0}= \sum_k \phi_k + \psi_k$, $\Piperp u_{x=2N} = \sum_k e^{-\sqrt{\nu_k-\lambda}N}\phi_k + e^{\sqrt{\nu_k-\lambda}N } \psi_k$ and $u_{x=2N} = \sigma u_{x=0}$.
\end{proof}
It is clear from the proof that the behavior of solutions is different for $\lambda\geq\nu_1$. For the purpose of this paper, we will always consider $\lambda<\nu_1$. Define $\lambdamax,\alphamax$ by
\begin{equation}
\label{eqn def lambdamax}
\lambdamax = \nu+\alphamax^2,\quad \lambdamax\in(\nu,\nu_1)\text{ arbitrary},
\end{equation}
then the estimates will always be uniform for $\lambda\leq\lambdamax$.

We define the 'boundary data' of a function $u$ on $Z^N$ as
\begin{equation}
\label{eqn def bd data}
u^0 := \Pi u_{|x=0},\quad u^1 := \Pi (\partial_\xi u)_{|x=0}.
\end{equation}
Clearly, if $u$ is an eigenfunction then $(u^0,u^1)$ determine $\Pi u$ uniquely:
Instead of the representation \eqref{eqn1}, \eqref{eqn2}, \eqref{eqn2a} for the leading part of an eigenfunction we use the basis
\begin{equation*}
\calC_\lambda (\xi) =
\begin{cases}
\cos \sqrt{\lambda-\nu}\xi & (\lambda>\nu) \\
1                          & (\lambda=\nu) \\
\cosh\sqrt{\nu-\lambda}\xi & (\lambda<\nu)
\end{cases}
,\quad
\calS_\lambda(\xi) =
\begin{cases}
\frac{\sin \sqrt{\lambda-\nu}\xi}{\sqrt{\lambda-\nu}} & (\lambda>\nu) \\
\xi                          & (\lambda=\nu) \\
\frac{\sinh\sqrt{\nu-\lambda}\xi} {\sqrt{\nu-\lambda}} & (\lambda<\nu)
\end{cases}
\end{equation*}
of solutions to $(d_\xi^2 + \lambda-\nu)v=0$. This is more useful than the exponential basis for $\lambda$ near $\nu$ since it depends analytically on $\lambda$ even at $\lambda=\nu$. For a solution $u$ of $(\Delta_{Z^N}+\lambda)u=0$ we have
\begin{equation}
\label{eqn Pi u from u0u1}
 \Pi u = \calC_\lambda u^0 +  \calS_\lambda u^1.
\end{equation}

\subsection{Distance between subspaces and spectral approximation}
\label{subsec dist subspaces}
It will be convenient to use the notion of distance between subspaces. Although it is standard, we recall its definition and basic properties here.

If $V,W$ are closed subspaces of a Hilbert space $H$ then define
$$ \dist(V,W) := \inf_{u\in V\setminus\{0\}} \frac{\dist(u,W)}{\|u\|},$$
where $\dist(u,W)=\inf\{\|u-w\|:\, w\in W\}$.
$\dist$ is not a distance function since it is asymmetric (it does satisfy the triangle inequality, however).

It is elementary to check the following properties (see \cite{HelSjo:MWSCLI}, for example):

\begin{lemma}
\label{lemma dist}
\begin{enuma}
\item $ \dist(V,W) = \|P_WP_V-P_V\| $
where $P_V,P_W$ are the orthogonal projections to $V,W$, respectively.
\item $\dist(V,W)\leq 1,\ \text{ and } <1 \text{ iff } \Pi_{W|V}:V\to W\text{ is injective.}$

Here, $\Pi_{W|V}$ is the restriction of $P_W$ to $V$. In particular $\dim V\leq \dim W$ if $\dist(V,W)<1$.
\item
If  $\dist(V,W)<1$ and $\dist (W,V)<1$ then $P_{V|W}:V\to W$ is an isomorphism and $\dist(V,W)=\dist(W,V)$. In this case we write
$$\distsymm(V,W) := \dist(V,W) = \dist (W,V).$$
\item
If $V_i$, $i\in I$, are pairwise orthogonal then
\begin{equation}
\label{eqn dist orthogonal}
\dist(\bigoplus_i V_i, W) \leq \sum_i \dist (V_i,W).
\end{equation}
\item
If $u_i$, $i=1,\dots,K$ are pairwise orthogonal and $\dist(\Span\{u_i\},W)<(1+\dim W)^{-1}$ for each $i$ then $K\leq\dim W$.
\end{enuma}
\end{lemma}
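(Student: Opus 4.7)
The plan is to reduce all five parts to two basic Pythagorean identities: for $u \in V$, since $u - P_W u \perp W$ one has $\dist(u, W) = \|(I - P_W) u\|$ and $\|u\|^2 = \|P_W u\|^2 + \|(I - P_W) u\|^2$. Parts (a), (b) and (d) then follow by routine operator-norm manipulations, part (e) reduces to (b) and (d) by a dimension count, and the real content is the symmetry statement in (c), which I would handle via singular values.

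For (a) I would observe that for $u \in V$ one has $(I - P_W) u = (P_V - P_W P_V) u$, and that the operator $P_V - P_W P_V$ annihilates $V^\perp$, so its operator norm on $H$ equals its supremum over $V$. Part (b) is immediate: the bound $\leq 1$ comes from taking $w = 0$ in the definition, while $\dist(V, W) = 1$ forces (in finite dimensions, where the supremum is attained) $\|P_W u\| = 0$ for some unit $u \in V$ by the second Pythagorean identity, which is exactly failure of injectivity of $P_W|_V$; the converse is the same computation read backwards. Part (d) is a triangle-inequality bound: for $u = \sum u_i$ with $u_i \in V_i$ pairwise orthogonal,
\begin{equation*}
\|(I - P_W) u\| \leq \sum_i \|(I - P_W) u_i\| \leq \sum_i \dist(V_i, W)\, \|u_i\| \leq \left(\sum_i \dist(V_i, W)\right) \|u\|,
\end{equation*}
using $\|u_i\| \leq \|u\|$ from orthogonality.

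The main obstacle is (c). I would write
\begin{equation*}
\dist(V, W)^2 = \sup_{u \in V \setminus \{0\}} \frac{\|(I - P_W) u\|^2}{\|u\|^2} = 1 - \inf_{u \in V \setminus \{0\}} \frac{\|P_W u\|^2}{\|u\|^2} = 1 - s_{\min}(P_W|_V)^2,
\end{equation*}
where $s_{\min}$ is the smallest singular value of $P_W|_V : V \to W$ with the induced inner products. A short adjoint check, $\la P_W u, w\ra_W = \la u, w\ra_H = \la u, P_V w\ra_V$ for $u \in V$, $w \in W$, shows $(P_W|_V)^\ast = P_V|_W$, so these two operators share their non-zero singular values. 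The analogous identity for $\dist(W, V)$ then yields $\dist(V, W) = \dist(W, V)$, and the hypothesis that both are $< 1$ forces $s_{\min} > 0$; in finite dimensions this makes both $P_W|_V$ and $P_V|_W$ bijections. The delicate point is keeping track of the two different inner product structures on $V$ and $W$ while identifying the adjoint.

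Finally, for (e), suppose for contradiction $K \geq \dim W + 1$. Applying (d) to $U := \Span\{u_1, \ldots, u_{\dim W + 1}\}$ gives
\begin{equation*}
\dist(U, W) \leq \sum_{i=1}^{\dim W + 1} \dist(\Span\{u_i\}, W) < \frac{\dim W + 1}{1 + \dim W} = 1;
\end{equation*}
but then (b) forces $\dim U \leq \dim W$, contradicting $\dim U = \dim W + 1$ which follows from pairwise orthogonality of the $u_i$.
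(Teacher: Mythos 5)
Your proposal is correct. The paper itself treats (a)--(d) as elementary facts deferred to a reference and only writes out the proof of (e), which is word-for-word the dimension-count argument you give; your Pythagorean/operator-norm arguments for (a), (b), (d) and the adjoint/singular-value argument for (c) are the expected elementary proofs. Two small remarks. First, the paper's displayed definition has $\inf_{u\in V\setminus\{0\}}$ where a $\sup$ must be meant (otherwise (a) is false whenever $V\cap W\neq\{0\}$); you silently corrected this. Second, in (c) the phrase ``share their non-zero singular values'' does not by itself give $s_{\min}(P_W|_V)=s_{\min}(P_V|_W)$, since a priori one of the two could have extra zero singular values if $\dim V\neq\dim W$; the clean order is to first use both hypotheses $\dist(V,W)<1$ and $\dist(W,V)<1$ together with (b) to get $\dim V=\dim W$ and that both $P_W|_V$ and $P_V|_W$ are bijective, after which the full multisets of singular values coincide and $\dist(V,W)=\dist(W,V)$ follows. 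You do invoke the hypotheses to get $s_{\min}>0$ immediately afterwards, so the ingredients are all present---it is only a presentational reordering, not a gap.
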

(Proof of (e), for example: Let $K'=\dim W$. If $K>K'$ then $\dist(\Span\{u_1,\dots,u_{K'+1}\},W)<(K'+1)/(K'+1)=1$, so $K'+1<K'$ by (d) and(b), a contradiction.)

We will use the following standard spectral approximation lemma. We include a proof for completeness.
\begin{lemma}[Spectral Approximation Lemma]\label{lemmaSAL}
Let $A$ be a selfadjoint operator in a Hilbert space $\calH$. For an interval $I\subset \R$ let $\Eig_I(A)$ be the spectral  subspace of $A$ corresponding to the spectral interval $I$.

Let $\lambda_0\in\R$, $\epsilon,\delta>0$. If $W\subset\ \Dom(A)$ is a linear subspace satisfying
\begin{equation} \label{eqnest1}
\|(A-\lambda_{0})u\| \leq \epsilon \|u\| \quad \forall u\in W
\end{equation}
then
\begin{equation}
\label{eqn distWEig}
\dist (W,\Eig_{(\lambda_0-\delta,\lambda_0+\delta)}(A)) \leq \frac\epsilon\delta.
\end{equation}
In particular, if $\epsilon<\delta$ then $\dim\Eig_{(\lambda_0-\delta,\lambda_0+\delta)}(A) \geq \dim W$.
\end{lemma}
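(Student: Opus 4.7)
The plan is to use the spectral theorem directly: decompose each $u \in W$ into its spectral components inside and outside the interval $I := (\lambda_0-\delta, \lambda_0+\delta)$, and show that the outside component must be small because of the hypothesis \eqref{eqnest1}.

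More concretely, let $P = P_{\Eig_I(A)}$ be the orthogonal projection onto $\Eig_I(A)$ and $P^\perp = \Id - P$. Both $P$ and $P^\perp$ commute with $A$ and with the bounded function $(A-\lambda_0)$ in the strong sense appropriate for unbounded operators, so for $u \in W \subset \Dom(A)$ we may decompose
\begin{equation*}
(A-\lambda_0)u = (A-\lambda_0)Pu + (A-\lambda_0)P^\perp u,
\end{equation*}
and the two summands are orthogonal. On $\operatorname{Ran}(P^\perp)$, the spectral theorem gives $\|(A-\lambda_0)v\| \geq \delta \|v\|$, since the spectrum of $A$ restricted to that range lies outside $(\lambda_0-\delta,\lambda_0+\delta)$. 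Combining these with \eqref{eqnest1} yields
\begin{equation*}
\epsilon^2 \|u\|^2 \geq \|(A-\lambda_0)u\|^2 \geq \|(A-\lambda_0)P^\perp u\|^2 \geq \delta^2 \|P^\perp u\|^2.
\end{equation*}
Hence $\|P^\perp u\| \leq (\epsilon/\delta)\|u\|$, and since $\dist(u, \Eig_I(A)) \leq \|u - Pu\| = \|P^\perp u\|$, taking the infimum over $u \in W \setminus\{0\}$ of $\dist(u,\Eig_I(A))/\|u\|$ produces \eqref{eqn distWEig}.

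For the dimension statement, if $\epsilon < \delta$ then $\dist(W,\Eig_I(A)) < 1$, and part (b) of Lemma \ref{lemma dist} gives $\dim W \leq \dim \Eig_I(A)$. No step is really an obstacle here; the only subtlety is to justify the orthogonal splitting of $(A-\lambda_0)u$ and the lower bound $\|(A-\lambda_0)P^\perp u\| \geq \delta \|P^\perp u\|$ through the functional calculus, which is standard once one writes $A = \int \lambda\, dE_\lambda$ and uses that $u \in \Dom(A)$ implies $P^\perp u \in \Dom(A)$.
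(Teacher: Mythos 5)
Your proof is correct and follows the same route as the paper: both use the spectral theorem to bound the component of $u$ outside the spectral interval $I$ by $\|(A-\lambda_0)u\|/\delta$, then invoke Lemma \ref{lemma dist}(b) for the dimension claim. The paper phrases the key inequality directly via the spectral integral $\delta^2\int_{|\lambda|\geq\delta}d(u,E_\lambda u)\leq\|Au\|^2$ rather than through the orthogonal decomposition of $(A-\lambda_0)u$, but this is only a cosmetic difference.
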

An important consequence is that existence of  spectral gaps implies good approximation of eigenfunctions: If $A$ has no spectrum in $\{\lambda:\epsilon\leq|\lambda-\lambda_0|<\delta\}$ then $W$ is $\epsilon/\delta$-close to $\Eig_{(\lambda_0-\epsilon,\lambda_0+\epsilon)}(A)$.
\begin{proof}
We may assume $\lambda_0=0$. Let $A=\int \lambda\, dE_\lambda$ be the spectral resolution of $A$.
Let $P_\delta$ be the the projection to $\Eig_{(-\delta,\delta)}(A)$. If $u\in\Dom(A)$ is arbitrary then
\begin{equation}
\label{eqn Pdelta estimate}
\delta^2\|u-P_\delta u\|^2 \leq  \|Au\|^2
\end{equation}
since
$ \delta^2 \int_{|\lambda|\geq\delta} d(u,E_\lambda (u))
\leq \int_{|\lambda|\geq\delta} \lambda^2 d(u,E_\lambda (u))
\leq \int_\R \lambda^2 d(u,E_\lambda (u)) $. Therefore, if $w\in W$ then $\|w-P_\delta w\|\leq \frac\epsilon\delta \|w\|$, which was to be shown.
The last claim follows from Lemma \ref{lemma dist}b).
\end{proof} % SAL

\section{Scattering theory background} \label{sec scatt}
Here we collect the facts from scattering theory that we need. A good reference for this material is \cite{Gui:TSQVB} (this may not be the earliest source).

Scattering theory is the spectral theory
of an elliptic operator on a non-compact space which has a 'simple' structure at
infinity, that is, is asymptotically equal to an operator on a space for
which the spectral decomposition may be written down fairly explicitly.
Among the central goals of scattering theory are
the determination of the absolutely continuous spectrum
and of generalized eigenfunctions correponding to it.

We will need fairly explicit spectral information on the operator  $-\Delta_{X^\infty}$. Since $X^\infty$ is cylindrical at infinity (that is, outside  a compact subset), we take as explicit model $-\Delta_{Z^\infty}'$, where the prime means that we impose Neumann boundary conditions at
$x=0$ (Dirichlet would be equally possible), in addition to
the boundary conditions coming from the D/N decomposition (resp. Robin data) of  $\partial Y$,  in order to make the operator essentially self-adjoint.
The spectral theory of $-\Delta_{Z^\infty}'$ is easy to obtain, using separation of variables, i.e. \eqref{eqn-laplace-decomposed}. We use the $\xi$-coordinate on $[0,\infty)$ for simplicity. It will be replaced by $xL$ at the end.

Since $-\frac{d^2}{d \xi^2}$ on $[0,\infty)$, with Neumann condition at $\xi=0$, has absolutely continuous spectrum
$[0,\infty)$, with generalized eigenfunctions $\cos \alpha \xi$ corresponding
to the spectral parameter $\alpha^2$, $\alpha\geq 0$, the decomposition \eqref{eqn-laplace-decomposed} shows that the absolutely continuous spectrum of the model is
$$\specabs (-\Delta_{Z^\infty}') = \bigcup_{k\geq 0} [\nu_k,\infty) = [\nu_0,\infty)=[\nu,\infty),$$
and that the functions
$$ %E_0^{(k)}(\phi,\lambda) =
\cos (\sqrt{\lambda-\nu_k}\,\xi)\phi\quad\text{for }k,\phi\text{ satisfying }
\quad \nu_k\leq\lambda,\quad\phi\in V_k$$
span the generalized eigenfunctions of $-\Delta_{Z^\infty}'$ with eigenvalue $\lambda$. That is, each such function $U$ satisfies $-\Delta_{Z^\infty}'U=\lambda U$ and is polynomially bounded as $\xi\to\infty$, and any function with these properties is in the linear span of these functions.

The scattering theory for spaces with cylindrical ends shows that this picture
carries over to $-\Delta_{X^\infty}$, except for the possible appearance of discrete spectrum,
and a 'phase shift' and exponentially decaying error term in the generalized
eigenfunctions.

First, we have the description of the spectrum:

\begin{theorem}
(\cite{Gui:TSQVB}) \label{thmscatt1}
The operator $-\Delta_{X^\infty}$ is essentially self-adjoint on $C_0^\infty(X^\infty)$. Its unique self-adjoint extension (still denoted $-\Delta_{X^\infty}$) has the following properties:
\begin{enuma}
\item
The pure point spectrum is a discrete subset of $[0,\infty)$.
\item
The singularly continuous spectrum is empty.
\item
The absolutely continuous spectrum is $[\nu,\infty)$.
\end{enuma}
\end{theorem}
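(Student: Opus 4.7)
The plan is to follow the classical scattering-theoretic treatment of Laplacians on manifolds with cylindrical ends, as in the cited reference \cite{Gui:TSQVB}. The structural input is the decomposition of $\Xinfty$ into a compact core together with finitely many half-infinite product cylinders $[T,\infty)\times Y_e$, on which the Laplacian has the explicit separated form \eqref{eqn-laplace-decomposed}. Essential self-adjointness of $-\DeltaXinfty$ with the prescribed boundary conditions follows from completeness of $\Xinfty$ combined with the well-posedness assumption on the D/N (or Robin) decomposition: on the compact core this is built into the hypotheses, and on a cylindrical end separation of variables reduces it to essential self-adjointness of $-L^{-2}\partial_x^2$ on the half-line, which is classical. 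To prove (c), I would apply Dirichlet bracketing along a hypersurface $\{x=T\}$ in each cylinder: the resulting decoupled operator $A_{\rm Dir}$ is the orthogonal sum of a Laplacian on a compact manifold (with purely discrete spectrum) and Dirichlet half-cylinders, for which \eqref{eqn-laplace-decomposed} yields $\sigma_{\rm ess}(A_{\rm Dir}) = [\nu,\infty)$. Since the resolvent difference $(-\DeltaXinfty+1)^{-1} - (A_{\rm Dir}+1)^{-1}$ is compact by a standard Krein-type argument, Weyl's theorem gives $\sigma_{\rm ess}(-\DeltaXinfty) = [\nu,\infty)$.

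To obtain (a) and (b) I would invoke a Mourre estimate with conjugate operator $A = \tfrac12(\chi(x)\partial_x + \partial_x\chi(x))$, where $\chi$ is smooth, supported in the cylindrical part, and equals $x$ for $x$ large. A direct computation using \eqref{eqn-laplace-decomposed} shows that $i[-\DeltaXinfty, A]$ equals $-2L^{-2}\partial_x^2$ on the region where $\chi' = 1$, modulo compactly supported lower-order terms. On the spectral subspace for a small interval around any $\lambda \in (\nu,\infty)\setminus \{\nu_k\}_{k\geq 0}$, decomposing into transverse modes shows that this commutator is bounded below by $c(\lambda-\nu_K)\,\Id$ modulo compact operators, where $\nu_K$ is the largest threshold $\leq\lambda$. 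The abstract Mourre theorem then yields the limiting absorption principle on every compact interval $J \subset (\nu,\infty) \setminus\{\nu_k\}$ avoiding eigenvalues, which simultaneously gives absolute continuity of the spectral measure on $J$ (excluding singular continuous spectrum) and local finiteness of embedded eigenvalues. The thresholds themselves contribute only discretely: by Lemma \ref{lemmasepvar} any $L^2$ eigenfunction with eigenvalue $\nu_k$ must have its $\Pi_k$-component constant along each cylindrical end, hence vanishing, and the reduced problem for the remaining modes is Fredholm on suitable weighted $L^2$ spaces, giving a finite-dimensional eigenspace at each $\nu_k$. Combining these statements across all thresholds and energy intervals proves (a) and (b).

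The principal technical obstacle is the threshold analysis: the Mourre commutator estimate degenerates as $\lambda \to \nu_k$, so ruling out accumulation of embedded eigenvalues just above a threshold requires either a threshold-adapted conjugate operator or a direct weighted resolvent construction (for which the b-calculus framework of \cite{HasMazMel:SFMWCCT} provides one systematic approach). For the applications in this paper the statements (a)--(c) are used as a black box quoted from \cite{Gui:TSQVB}; the finer information below $\nu_1$ that is actually needed will be furnished by the explicit scattering solutions constructed in the subsequent theorems.
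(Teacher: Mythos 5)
The paper does not prove Theorem~\ref{thmscatt1} at all: it is quoted directly from \cite{Gui:TSQVB} and used as a black box, a point the paper itself makes explicit in the introduction (``We use existence of the scattering matrix for manifolds with cylindrical ends as a `black box'\,''). So there is no internal proof to compare your sketch against. What you have written is a plausible reconstruction of the standard scattering-theoretic argument in the literature, and you correctly identify the structural ingredients: essential self-adjointness from the product structure on the ends, essential spectrum via Dirichlet bracketing and Weyl's theorem, and a Mourre commutator estimate for (a) and (b).

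That said, your sketch has a real gap that you yourself flag but do not close, and which is precisely the nontrivial content of the theorem. The Mourre estimate with your conjugate operator gives a limiting absorption principle and local finiteness of embedded eigenvalues only on compact intervals $J\subset(\nu,\infty)$ that stay away from the thresholds $\nu_k$; near a threshold the commutator lower bound collapses. Your threshold analysis (Fredholmness of the reduced problem in weighted spaces) controls the multiplicity of an eigenvalue sitting exactly at $\nu_k$, but does not rule out a sequence of eigenvalues accumulating at $\nu_k$ from above or at $\nu$ from below — and ``the pure point spectrum is a discrete subset'' is exactly the assertion that no such accumulation occurs. To actually close this you would need either a threshold-adapted conjugate operator, a meromorphic continuation of the weighted resolvent across the thresholds (as in \cite{Gui:TSQVB} or the b-calculus treatment of \cite{HasMazMel:SFMWCCT}), or a rescaled commutator estimate of Agmon--Herbst--Skibsted type. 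Also, a minor logical note: Weyl's theorem identifies the \emph{essential} spectrum, not the absolutely continuous spectrum; (c) follows only once (a) and (b) are established, so the order in your write-up should make that dependence explicit. For the purposes of this paper, however, none of this needs to be proved: citing Guillopé is the intended move, and the finer spectral information actually used is re-derived constructively below $\nu_1$ in Sections~\ref{sec scatt}--\ref{secestef}.
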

Discreteness includes finite multiplicity.

We now discuss the generalized eigenfunctions (or 'scattering solutions') $E$,
\begin{equation}\label{Eeqn}
-\Delta_{X^\infty}E=\lambda E,\quad \lambda=\nu+\alpha^2
\end{equation}
for spectral values $\lambda\in[\nu,\nu_1)$. One could also consider values $\nu\geq\nu_1$, this would mean using higher scattering matrices.

By Lemma \ref{lemmasepvar}, $E_{|Z^\infty} = \Pi E + \Piperp E$ where $E$ has the explicit form \eqref{eqn2} or \eqref{eqn2a} and $\Piperp E$ is a sum of exponentially decreasing and exponentially increasing terms. By polynomial boundedness, the latter must vanish.

\begin{theorem}[\cite{Gui:TSQVB}] \label{thmscatt2}\
\begin{enuma}
\item ($\lambda>\nu)$
Assume that $\lambda\in(\nu,\nu_1)$ is not an $L^2$-eigenvalue of $-\DeltaXinfty$.
For each $\phi\in V$ there is
a unique bounded solution $E=E_{\alpha,\phi}$ of \eqref{Eeqn}
satisfying (with $\alpha=\sqrt{\lambda-\nu}$)
\begin{equation}\label{scattsoln}
\Pi E_{\alpha,\phi} = e^{-i\alpha \xi}\phi + e^{i\alpha \xi}\phi'\quad\text{ for some }\phi'\in V.
\end{equation}
\item (Scattering matrix)
This defines a linear map
$$S(\alpha): V\to V,\quad \phi\mapsto\phi',$$
called the {\em scattering matrix}.  $S(\alpha)\in \End(V_0)$ is holomorphic in $\alpha$ and extends meromorphically to $\alpha\in\C\setminus\{\alpha\in\R:\ |\alpha|\geq \sqrt{\nu_1-\nu}\}$, holomorphic for real $\alpha$.
Furthermore:
\begin{enumi}
\item $S(\alpha)$ is unitary for $\alpha$ real.
%\item $S(\alpha)$ is hermitian for $\alpha$ imaginary. (NOT NEEDED?)
\item $S(\alpha)S(-\alpha)=I$ for all $ \alpha$.
\item $S(0)$ is an involution, and $S'(0)$ commutes with $S(0)$.
\end{enumi}
\item ($\lambda=\nu$)
The space $\calS_\alpha=\{E_{\alpha,\phi}:\,\phi\in V\}$ depends holomorphically on $\alpha$ for $\alpha$ as in a) and extends holomorphically as in b). For $\alpha=0$ it is described as follows:

Let $V_\pm$ be the $\pm 1$ eigenspaces of $S(0)$. For all $\Phi_+\in V_+$, $\Psi_-\in V_-$ there is a unique generalized eigenfunction $E=E_{0,\Phi_+,\Psi_-}\in\calS_0$ satisfying
\begin{equation}
\label{scattsolnu}
\Pi E_{0,\Phi_+,\Psi_-} = \Phi + \xi\Psi_-, \quad \Phi = \Phi_+ + \frac i2 S'(0)\Psi_-,
\end{equation}
Here, $\frac i2 S'(0)\Psi_-\in V_-$.
\end{enuma}
The functions in (a) and (c) are all the generalized eigenfunctions, up to addition of possible $L^2$-eigenfunctions. %Furthermore, they satisfy the estimates
%\begin{equation}\label{eqnpiperpEest}
%\| \Pi^\perp E_{\alpha,\phi} \|_{Z^\infty} \leq C\|\phi \|\ \text{ resp. }\ \| \Pi^\perp E_{0,\Phi_+,\Psi_-} \|_{Z^\infty} \leq C \|\Phi_+\| + \|\Psi_-\|.
%\end{equation}
\end{theorem}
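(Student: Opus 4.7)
My plan is to follow the standard scattering-theory construction for manifolds with cylindrical ends (as in \cite{Gui:TSQVB}): build $E_{\alpha,\phi}$ via the outgoing resolvent, derive the algebraic properties of $S(\alpha)$ from a boundary-pairing (Wronskian) identity, and then handle the threshold case $\alpha=0$ by a careful Taylor expansion. Concretely, for part (a), let $\chi$ be a smooth cutoff on $X^\infty$ equal to $1$ far out on each cylindrical end and vanishing on the compact core, and set $E_{\mathrm{in}} := \chi(\xi)\,e^{-i\alpha\xi}\phi$, so that $f := (-\Delta_{X^\infty}-\lambda)E_{\mathrm{in}}$ is compactly supported. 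The outgoing resolvent $R_+(\lambda) := \lim_{\epsilon\downarrow 0}(-\Delta_{X^\infty}-\lambda-i\epsilon)^{-1}$, which exists on compactly supported data by the limiting absorption principle (this is where the hypothesis that $\lambda$ is not an $L^2$-eigenvalue enters), produces $v := -R_+(\lambda)f$ whose $\Pi$-leading part on each end is of the form $e^{i\alpha\xi}\phi'$ modulo an exponentially decaying remainder. Then $E_{\alpha,\phi} := E_{\mathrm{in}} + v$ satisfies \eqref{scattsoln}. For uniqueness, if $E$ is a polynomially bounded solution of $-\Delta E=\lambda E$ with $\Pi E = e^{i\alpha\xi}\psi$ and $\Pi^\perp E$ exponentially decaying, I compute the Wronskian $W(\xi) := \int_Y(\partial_\xi E\cdot\overline E - E\cdot\overline{\partial_\xi E})$ on a cylindrical end: it is constant in $\xi$ (by self-adjointness of $-\Delta_Y$) and in the limit $\xi\to\infty$ reduces to $2i\alpha\|\psi\|^2$ after the oscillatory cross terms cancel; on the other hand Green's identity on $\{\xi\leq T\}$ gives $W(T)=0$. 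For $\alpha\neq 0$ this forces $\psi=0$ and $E\in L^2$, contradicting the hypothesis.

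For part (b), linearity of $\phi\mapsto\phi'$ is automatic, and holomorphy and meromorphic continuation of $S(\alpha)$ inherit from those of $R_+(\lambda)$. Unitarity (i) comes from the same boundary pairing applied to $(E_{\alpha,\phi_1},E_{\alpha,\phi_2})$: Green's identity yields $0$ on the left (both are eigenfunctions with the same real eigenvalue), and the $T\to\infty$ limit of the cylindrical boundary term reduces to $2i\alpha(\langle S(\alpha)\phi_1,S(\alpha)\phi_2\rangle-\langle\phi_1,\phi_2\rangle)=0$. Identity (ii) follows from uniqueness in (a): rewriting $\Pi E_{\alpha,\phi} = e^{-i(-\alpha)\xi}S(\alpha)\phi + e^{i(-\alpha)\xi}\phi$ shows that $E_{\alpha,\phi}$ is a bounded solution whose incoming part at spectral parameter $-\alpha$ is $S(\alpha)\phi$, so it must equal $E_{-\alpha,S(\alpha)\phi}$, yielding $S(-\alpha)S(\alpha)=I$. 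Setting $\alpha=0$ in (ii) gives $S(0)^2=I$; differentiating (ii) at $\alpha=0$ gives $S'(0)S(0)-S(0)S'(0)=0$, proving (iii).

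For part (c), I Taylor-expand $\Pi E_{\alpha,\phi} = e^{-i\alpha\xi}\phi + e^{i\alpha\xi}S(\alpha)\phi$ using $S(\alpha) = S(0)+\alpha S'(0)+O(\alpha^2)$. For $\Phi_+\in V_+$ (so $S(0)\Phi_+=\Phi_+$) one finds $\Pi E_{\alpha,\Phi_+} = 2\Phi_+ + O(\alpha)$, so $F:=\lim_{\alpha\to 0}\tfrac12 E_{\alpha,\Phi_+}$ exists and has $\Pi F = \Phi_+$. For $\phi_-\in V_-$ (so $S(0)\phi_-=-\phi_-$) the leading term vanishes and one computes $\Pi E_{\alpha,\phi_-} = \alpha\bigl(S'(0)\phi_- - 2i\xi\phi_-\bigr)+O(\alpha^2)$; setting $\Psi_-:=-2i\phi_-$ (so $\phi_- = \tfrac{i}{2}\Psi_-$), the limit $G:=\lim_{\alpha\to 0}\tfrac{1}{\alpha}E_{\alpha,\phi_-}$ exists and has $\Pi G = \xi\Psi_- + \tfrac{i}{2}S'(0)\Psi_-$. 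By (iii), $S'(0)$ preserves $V_-$, so $\tfrac{i}{2}S'(0)\Psi_-\in V_-$, as required. The desired scattering solution is $E_{0,\Phi_+,\Psi_-}:=F+G$, whose $\Pi$-part is $\Phi_+ + \tfrac{i}{2}S'(0)\Psi_- + \xi\Psi_-$, matching \eqref{scattsolnu}. Holomorphy of the family $\calS_\alpha$ across $\alpha=0$ follows because both $F$ and $G$ arise as holomorphic sections in the rescaled parameterization $(\Phi_+,\Psi_-)$.

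The final completeness assertion is a dimension count: by Lemma~\ref{lemmasepvar} the space of bounded solutions of $-\Delta E = \lambda E$ modulo $L^2$-eigenfunctions is $\dim V$-dimensional (uniquely determined by the incoming $\Pi$-leading data), and this matches the dimension of the family constructed in (a) and (c). The main technical obstacle is the threshold analysis at $\alpha=0$: the parameterization $\phi\mapsto E_{\alpha,\phi}$ degenerates on $V_-$ (where it vanishes to order $\alpha$), and one must keep track that the subspace $\calS_\alpha$ itself depends holomorphically through $\alpha=0$. The crucial algebraic input $[S(0),S'(0)]=0$ is what guarantees that $S'(0)$ preserves the $V_\pm$ decomposition, allowing the closed-form formula \eqref{scattsolnu} to split cleanly into a $V_+$ and a $V_-$ contribution; without it the limit as $\alpha\to 0$ would mix the two eigenspaces in an uncontrolled way.
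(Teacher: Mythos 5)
Your proof is correct and follows the same approach as the paper: the paper cites parts (a), (b)(i)--(ii), and the holomorphy statements from \cite{Gui:TSQVB}, sketching only the derivation of (b)(iii) by setting $\alpha=0$ and differentiating (ii), and the rescaled parametrization that yields \eqref{scattsolnu} (carried out in Lemma \ref{lemma scatt subspace}); your arguments for those two points are identical. Your sketch of the cited parts (outgoing resolvent plus Wronskian/boundary-pairing) is the standard scattering-theory route used in the reference.
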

Note that if $\nu+\alpha^2$ is an $L^2$-eigenvalue, the function $E_{\alpha,\phi}$ resp. $E_{0,\Phi_+,\Psi_-}$ is not uniquely determined by fixing its leading part, since the leading part of an $L^2$-eigenfunction is zero. However, it is determined by the additional requirements that it lie in $\calS_\alpha$ and that this space depends continuously on $\alpha$ (for $\alpha\neq0$ this means that $\alpha\mapsto E_{\alpha,\phi}$ depends continuously on $\alpha$).
\begin{proof}
This is mostly standard. Note that b)iii) follows from ii) by setting $\alpha=0$, and from differentiating ii) at $\alpha=0$, which gives $S'(0)S(0)-S(0)S'(0)=0$. This implies that $S'(0)$ preserves $V_\pm$, and therefore the last claim in c). Equation  \eqref{scattsolnu} will be explained below.
%Let us also prove the formula in c) since it is less well-known:
%Let $\calS_\alpha$ be the subspace of $C^\infty((0,\infty),V)$ given by the $\Pi E$ parts of generalized eigenfunctions $E$ for $\lambda=\nu+\alpha^2$, for $0\leq\alpha<\sqrt{\nu'-\nu}$. By general principles, this is a $\dim V$-dimensional space for each $\alpha$, varying continuously with $\alpha$. For $\alpha>0$ we have $\calS_\alpha=\{e_{\alpha,\phi}:\ \phi\in V\}$, $e_{\alpha,\phi} := e^{-i\alpha \xi}\phi + e^{i\alpha \xi} S(\alpha)\phi$.
%
%Given $\Phi_+\in V_+, \Psi_-\in V_-$ consider $\phi=\phi(\alpha) = \frac12 \Phi_+ + \frac i{2\alpha} \Psi_-$. Writing $S(\alpha)=S(0) + \alpha S'(0) + O(\alpha^2)$,
%we see
%$$ S(\alpha)\phi = \frac12 \Phi_+ - \frac i{2\alpha}\Psi_- + \frac i2 S'(0)\Psi_- + O(\alpha),\quad \alpha\to 0$$
%and then
%$$ e_{\alpha,\phi} = \frac {e^{-i\alpha\xi} + e^{i\alpha\xi}}2 \Phi_+
%                + \frac{e^{-i\alpha \xi}-e^{i\alpha \xi}}{2\alpha}i\Psi_- + e^{i\alpha\xi} \frac i2 S'(0)\Psi_- + O(\alpha),$$
%and this converges to $\Phi_+  + \xi \Psi_- + \frac i2S'(0)\Psi_-$ as $\alpha\to 0$.
%Therefore, $\calS_0=\lim_{\alpha\to0}\calS_{\alpha}$ includes the space of the functions $E_{0,\Phi_+,\Psi_-}$, and since both spaces have dimension $\dim V$, they must be equal.
\end{proof}

In our context, we use the variable $x$, where $\xi = xL$. Therefore
\begin{align}\label{eqnEalpha}
\Pi E_{\alpha,\phi} &= e^{-i\alpha x L }\phi + e^{i\alpha x L} S(\alpha)\phi\\
\Pi E_{0,\Phi_+,\Psi_-} &= (\Phi_+ + \frac i2 S'(0)\Psi_-) + x L\Psi_-,
\label{eqnEPhiPsi}
\end{align}
respectively.

Theorem \ref{thmscatt2} can be reformulated in terms of 'scattering subspaces'; we will use this since it allows a more uniform treatment of the cases $\alpha\neq0$ and $\alpha=0$:

Recall the notation $u^0=\Pi u_{x=0}$, $u^1=\Pi \partial_\xi u_{x=0}$.
Define the 'scattering subspace'  for $\alpha\in [-\alphamax,\alphamax]$ by
\begin{equation}
\label{eqn def scatt subspace}
\calL_{\alpha} := \{(E^0,E^1):\, (\DeltaXinfty + \nu+\alpha^2)E=0,\ E \text{ polynomially bounded}\} \subset V\times V.
\end{equation}
Since $\Pi E$  determines and is determined by $(E^0,E^1)$, part a) of Theorem \ref{thmscatt2} may be reformulated as follows:
\begin{equation}
\label{eqn calL for alpha positive}
\calL_\alpha = \{\left(\;(I+S(\alpha))\phi, -i\alpha (I-S(\alpha))\phi\;\right):\ \phi\in V \}\quad\text{ for }\alpha\neq 0\text{ not an }L^2-\text{eigenvalue},
\end{equation}
and to each $(E^0,E^1)\in\calL_\alpha$ there is a unique scattering solution $E$ with eigenvalue $\nu+\alpha^2$.

In particular, $\calL_\alpha$ is a  $\dim V$-dimensional subspace of $V\times V$ for these $\alpha$; since $\calL_\alpha=\Pi\calS_\alpha$ it extends analytically to all $|\alpha|<\sqrt{\nu_1-\nu}$. However, in general the continuation to $\alpha=0$ cannot be obtained by setting $\alpha=0$ on the right of \eqref{eqn calL for alpha positive} since this yields $V_+\times\{0\}$, a space of lower dimension (unless $S(0)=\Id$),
so this cannot be $\calL_0$.  In other words, the parametrization of $\calL_\alpha$ by $\phi$ does not extend uniformly to $\alpha=0$. Therefore, we will also need another parametrization which works (and is analytic) also at $\alpha=0$:

\begin{lemma}
\label{lemma scatt subspace}
For $\alpha>0$,
\begin{equation}
\label{eqn calL rescaled}
\begin{aligned}
\calL_\alpha = \{(E^0,E^1):\
& E^0 = \rho_+ + \tfrac i2 S'(0)\rho_- + \alpha R^0(\alpha)\rho \\
& E^1 = \rho_- - \tfrac 12 \alpha S'(0)\rho_- + \alpha^2 R^1(\alpha)\rho,\\
& \rho\in V\}
\end{aligned}
\end{equation}
for certain families $R^0(\alpha),R^1(\alpha)$ of  endomorphisms of $V$, depending analytically on  $\alpha$ for $|\alpha|\leq\alphamax$.

If $S(\alpha)=S(0)$ for all $\alpha$ then $\calL_\alpha=\{(\rho_+,\rho_-):\rho\in V\}$ for all $\alpha$.
\end{lemma}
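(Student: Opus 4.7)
The plan is to derive \eqref{eqn calL rescaled} from the $\phi$-parametrization \eqref{eqn calL for alpha positive} by an explicit $\alpha$-dependent change of variables on $V$ that becomes singular at $\alpha = 0$ precisely on the component where \eqref{eqn calL for alpha positive} degenerates. Concretely, for $\alpha > 0$ not an $L^2$-eigenvalue of $-\DeltaXinfty$, I would set $\phi = \tfrac{1}{2}\rho_+ + \tfrac{i}{2\alpha}\rho_-$ (equivalently $\rho_+ = 2\phi_+$, $\rho_- = -2i\alpha\phi_-$); this is a linear bijection of $V$ for each such $\alpha$, so the image of the resulting map $\rho \mapsto (E^0,E^1)$ is still $\calL_\alpha$. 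The $1/\alpha$ factor will cancel: in $E^0$ against the fact that $(I+S(\alpha))\phi_- = O(\alpha)$ near $\alpha = 0$, and in $E^1$ against the prefactor $-i\alpha$ in front of $I - S(\alpha)$.

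The main computation then uses the holomorphic expansion $S(\alpha) = S(0) + \alpha S'(0) + \alpha^2 S_2(\alpha)$, valid on $|\alpha| \leq \alphamax$, together with $S(0)\phi_\pm = \pm\phi_\pm$, to obtain
\begin{align*}
 (I+S(\alpha))\phi &= 2\phi_+ + \alpha S'(0)\phi + \alpha^2 S_2(\alpha)\phi, \\
 -i\alpha(I-S(\alpha))\phi &= -2i\alpha\phi_- + i\alpha^2 S'(0)\phi + i\alpha^3 S_2(\alpha)\phi.
\end{align*}
Substituting the formula for $\phi$ and collecting powers of $\alpha$ --- with the single key identity $i\alpha^2 \cdot \tfrac{i}{2\alpha} = -\tfrac{\alpha}{2}$ converting the mixed term in $E^1$ into $-\tfrac{\alpha}{2}S'(0)\rho_-$ --- produces exactly \eqref{eqn calL rescaled} with
\begin{align*}
 R^0(\alpha)\rho &= \tfrac{1}{2}S'(0)\rho_+ + \tfrac{\alpha}{2}S_2(\alpha)\rho_+ + \tfrac{i}{2}S_2(\alpha)\rho_-, \\
 R^1(\alpha)\rho &= \tfrac{i}{2}S'(0)\rho_+ + \tfrac{i\alpha}{2}S_2(\alpha)\rho_+ - \tfrac{1}{2}S_2(\alpha)\rho_-,
\end{align*}
both manifestly holomorphic on $|\alpha|\leq\alphamax$ because $S_2$ is. This establishes \eqref{eqn calL rescaled} on the dense open subset of $(0,\alphamax]$ consisting of $\alpha$ that are not $L^2$-eigenvalues.

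For the extension to all $|\alpha|\leq\alphamax$ I would check $\alpha = 0$ directly: the right-hand side of \eqref{eqn calL rescaled} specialises to $(\rho_+ + \tfrac{i}{2}S'(0)\rho_-, \rho_-)$, which by \eqref{scattsolnu} is the boundary data of $E_{0,\rho_+,\rho_-}$, so the right-hand set equals $\calL_0$ by Theorem \ref{thmscatt2}(c). Here one uses that $S'(0)$ preserves $V_\pm$ (from $[S'(0),S(0)] = 0$ in Theorem \ref{thmscatt2}(b)iii)), so $\tfrac{i}{2}S'(0)\rho_- \in V_-$ and the expression for $E^0$ is consistent with the decomposition in \eqref{scattsolnu}. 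Both $\calL_\alpha$ (by Theorem \ref{thmscatt2}(c)) and the right-hand side of \eqref{eqn calL rescaled} are then holomorphic families of $\dim V$-dimensional subspaces of $V \times V$ --- the latter because the parametrizing linear map is analytic in $\alpha$ and is injective at $\alpha = 0$ ($E^1 = 0$ forces $\rho_- = 0$, then $E^0 = 0$ forces $\rho_+ = 0$), hence in a neighborhood --- so agreement on a dense set forces agreement everywhere. The special case is immediate: if $S(\alpha) \equiv S(0)$ then $S'(0) = 0$ and $S_2 \equiv 0$, so $R^0 = R^1 = 0$ and \eqref{eqn calL rescaled} collapses to $\{(\rho_+,\rho_-):\rho\in V\}$.

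The main obstacle, modest but genuine, is keeping track of the $1/\alpha$ cancellations and the associated sign bookkeeping --- in particular the relative sign between the $\tfrac{i}{2}S'(0)\rho_-$ contribution to $E^0$ and the $-\tfrac{1}{2}\alpha S'(0)\rho_-$ contribution to $E^1$, which comes from the extra factor of $i$ in the $i\alpha^2 \cdot \tfrac{i}{2\alpha} = -\tfrac{\alpha}{2}$ step. Once this is under control, the rest is a routine expansion plus a standard analyticity argument.
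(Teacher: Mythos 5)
Your argument is essentially the paper's own proof: the same Taylor expansion $S(\alpha)=S(0)+\alpha S'(0)+\alpha^2 S_2(\alpha)$, the same rescaling $\rho_+=2\phi_+$, $\rho_-=-2i\alpha\phi_-$ applied to the parametrization \eqref{eqn calL for alpha positive}, and the same collection of powers of $\alpha$, with the resulting $R^0,R^1$ matching what the paper produces tacitly. You additionally write out $R^0$, $R^1$ explicitly and append a density/analyticity argument to cover $L^2$-eigenvalues and the point $\alpha=0$, which the paper leaves implicit (and is not strictly required by the statement, which only asserts \eqref{eqn calL rescaled} for $\alpha>0$).
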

\begin{proof}
Write $S(\alpha)=S(0)+\alpha T + \alpha^2 R(\alpha)$, $T:=S'(0)$, by Taylor's formula.  By \eqref{eqn calL for alpha positive}, $(E^0,E^1)\in V\times V$ is in $\calL_\alpha$ (for $\alpha>0$) iff, for some $\phi\in V$,
\begin{align*}
E^0&=(I+S(\alpha))\phi && = 2\phi_+ + \alpha T \phi_- + \alpha T \phi_+ + \alpha^2 R(\alpha)\phi\\
E^1&=-i\alpha(I-S(\alpha))\phi && = -2i\alpha \phi_- + i\alpha^2 T \phi_- + i\alpha^2 T \phi_+ + i\alpha^3 R(\alpha)\phi.
\end{align*}
Here we used $S(0)\phi=\phi_+-\phi_-$ and $(T\phi)_\pm=T\phi_\pm$. Write
\begin{equation}
\label{eqn rescale phi}
\rho_+ = 2\phi_+,\ \rho_- = -2i\alpha\phi_-,
\end{equation}
then this becomes \eqref{eqn calL rescaled}, with suitable $R^0,R^1$.
The last statement is clear from this derivation.
\end{proof}

In particular,
\begin{equation}
\label{eqn L0}
\calL_0= \{(\rho_+ + \frac i2 S'(0)\rho_-,\rho_-):\, \rho\in V\},
\end{equation}  and this explains \eqref{scattsolnu}.

\section{Scattering solutions and matching conditions}
\label{sec scatt matching}
In this section we analyze for which scattering solutions $E$ the leading part $\Pi E$ satisfies the matching condition at $x=N$.

\subsection{The case $\lambda=\nu$}
\begin{lemma}
\label{lemmabcEthreshold}
Let $\Phi_+\in V_+$, $\Psi_-\in V_-$. If $N>\frac1{2\lmin}\|S'(0)\|$ then $\Pi E_{0,\Phi_+,\Psi_-}$, given by \eqref{eqnEPhiPsi}, satisfies the matching conditions \eqref{eqn matching u} iff
\begin{equation}
\label{eqnbcEthreshold}
\sigma \Phi_+=\Phi_+,\quad \Psi_-=0.
\end{equation}
\end{lemma}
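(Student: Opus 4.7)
The plan is to apply the matching conditions of Lemma \ref{lemmabc} directly to the explicit formula \eqref{eqnEPhiPsi}, $\Pi E = \Phi + \xi \Psi_-$ with $\Phi = \Phi_+ + \tfrac{i}{2} S'(0)\Psi_-$, and then to eliminate the ``free'' parameter $\Phi_+$ by pairing the resulting identity against $\Psi_-$, exploiting that the two eigenspaces $V_+,V_-$ of $S(0)$ are automatically orthogonal. The ``if'' direction is immediate: if $\Psi_- = 0$ and $\sigma \Phi_+ = \Phi_+$, then $\Pi E = \Phi_+ \in V^+$ is constant in $\xi$, so both $(\Pi E)^- = 0$ and $(\partial_\xi \Pi E)^+ = 0$ hold trivially at $x = N$.

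For the converse, $\partial_\xi \Pi E = \Psi_-$ is constant, so the second condition in \eqref{eqn matching u} yields $\Psi_-^+ = 0$, i.e.\ $\Psi_- \in V^-$. Since $L$ commutes with $\sigma$ (both are diagonal with respect to the half-edge splitting, with $\sigma$ permuting within each edge where $L$ is scalar), the first matching condition reduces to
\[
\Phi_+^- + \tfrac{i}{2}(S'(0)\Psi_-)^- + N L \Psi_- = 0.
\]
Now I take the $V$-inner product with $\Psi_-$. Because $V_+ \perp V_-$ (as eigenspaces of the self-adjoint involution $S(0)$) and $\Psi_- \in V^-$, the first term contributes $\la \Phi_+^-, \Psi_-\ra = \la \Phi_+, \Psi_-\ra = 0$; similarly $\la (S'(0)\Psi_-)^-, \Psi_-\ra = \la S'(0)\Psi_-, \Psi_-\ra$, so the equation becomes
\[
N \la L \Psi_-, \Psi_-\ra = -\tfrac{i}{2} \la S'(0)\Psi_-, \Psi_-\ra.
\]
Bounding the left side below by $N\lmin \|\Psi_-\|^2$ and the right side above by $\tfrac{1}{2}\|S'(0)\|\|\Psi_-\|^2$, the hypothesis $N > \tfrac{1}{2\lmin}\|S'(0)\|$ forces $\Psi_- = 0$; substituting back into the displayed equation gives $\Phi_+^- = 0$, i.e.\ $\sigma \Phi_+ = \Phi_+$.

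The only real obstacle is recognizing the pairing trick. A naive norm estimate on the displayed equation yields only $\|\Psi_-\| \leq C \|\Phi_+^-\|$, which is too weak because $\Phi_+^-$ is a priori unconstrained. Pairing with $\Psi_-$ instead annihilates the $\Phi_+$ contribution entirely, thanks to the $S(0)$-spectral decomposition $V = V_+ \oplus V_-$. As a sanity check (not needed for the proof), $\la S'(0)\Psi_-, \Psi_-\ra$ is purely imaginary: differentiating the unitarity relation $S(-\alpha) = S(\alpha)^*$ at $\alpha = 0$ gives $S'(0)^* = -S'(0)$, so both sides of the reduced equation are actually real.
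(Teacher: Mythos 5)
Your proof is correct and follows essentially the same route as the paper: write out the two matching conditions, pair the first against $\Psi_-$, and use both orthogonal decompositions ($V^\pm$ for $\sigma$, $V_\pm$ for $S(0)$) so that only the $S'(0)$ and $L$ terms survive, after which the size hypothesis on $N$ forces $\Psi_-=0$. The paper presents the pairing as ``$\Phi+NL\Psi_-\in V^+$ is orthogonal to $\Psi_-\in V^-$'' rather than projecting to $V^-$ first, but this is the same computation; your closing remark that $\la S'(0)\Psi_-,\Psi_-\ra$ is purely imaginary (from $S'(0)^*=-S'(0)$) is a nice consistency check not in the paper.
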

\begin{proof}
The matching conditions \eqref{eqn matching u} for $\Pi E_{0,\Phi_+,\Psi_-}$ are
\begin{equation}
\label{eqnbc1}
 (I-\sigma) (\Phi_+ + \frac i2 S'(0) \Psi_- + NL\Psi_-) = 0, \quad (I+\sigma)\Psi_-=0.
\end{equation}
This is clearly satisfied if \eqref{eqnbcEthreshold} holds. Conversely, if
\eqref{eqnbc1} holds then $\Phi_+ + \frac i2 S'(0) \Psi_- + NL\Psi_-$ and $\Psi_-$ are orthogonal, so
\begin{equation}
\label{eqn aaa}
 0=\langle\Phi_+ + \frac i2 S'(0) \Psi_- + NL\Psi_-,\Psi_-\rangle=\langle\frac i2 S'(0)\Psi_-,\Psi_-\rangle + N\langle L\Psi_-,\Psi_-\rangle,
\end{equation}
and Cauchy-Schwarz gives
$$ \|\frac i2 S'(0)\Psi_-\|\cdot \|\Psi_-\| \leq \frac12 \|S'(0)\|\cdot \|\Psi_-\|^2.$$
Since $\lmin \|\Psi_-\|^2 \leq (L\Psi_-,\Psi_-)$, this and \eqref{eqn aaa} imply $\Psi_-=0$ if $N>\frac1{2\lmin} \|S'(0)\|$, and then \eqref{eqnbc1} gives $\sigma \Phi_+=\Phi_+$.
\end{proof}

\subsection{The case $\lambda>\nu$}
First, we express the matching conditions in terms of the scattering matrix.
\begin{lemma} \label{lemmabcE}
Let $\alpha>0$, $\phi\in V$. Then $\Pi E_{\alpha,\phi}$, given by \eqref{eqnEalpha}, satisfies the matching conditions \eqref{eqn matching u} iff
\begin{equation} \label{eqnbcE}
(I-e^{i\alpha N2L}\sigma S(\alpha)) \phi = 0.
\end{equation}
\end{lemma}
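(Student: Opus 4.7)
The calculation is essentially a bookkeeping exercise, using \eqref{eqnEalpha} together with the structural fact that $\sigma$ and $L$ commute (since $L$ is constant, equal to $l_e$, on the pair $\{Y_{\bar e},Y_{\sigma(\bar e)}\}$ arising from each edge $e$). The plan is therefore to substitute the explicit expression for $\Pi E_{\alpha,\phi}$ into the matching conditions \eqref{eqn matching u} and manipulate the resulting identities in $V$ until they assume the claimed form.

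First, I would abbreviate $M := e^{i\alpha N L}\in\End(V)$ and record, from \eqref{eqnEalpha} and $\partial_\xi = L^{-1}\partial_x$,
\begin{align*}
\Phi &:= \Pi E_{\alpha,\phi}\bigl|_{x=N} = M^{-1}\phi + M\,S(\alpha)\phi,\\
\Psi &:= \Pi\partial_\xi E_{\alpha,\phi}\bigl|_{x=N} = i\alpha\bigl(-M^{-1}\phi + M\,S(\alpha)\phi\bigr).
\end{align*}
The two matching conditions in \eqref{eqn matching u} read $(I-\sigma)\Phi = 0$ and $(I+\sigma)\Psi = 0$. Since $\sigma L = L\sigma$, and hence $\sigma M = M\sigma$, each of $(I\mp\sigma)$ commutes with $M^{\pm 1}$. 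Dividing the second equation by $i\alpha$ (which is nonzero by hypothesis) and using $(I-\sigma)\phi = 2\phi^-$, $(I+\sigma)\phi = 2\phi^+$, the two conditions become
\[
M^{-1}\phi^- + M\,(S(\alpha)\phi)^- = 0,\qquad -M^{-1}\phi^+ + M\,(S(\alpha)\phi)^+ = 0,
\]
equivalently
\[
(S(\alpha)\phi)^- = -M^{-2}\phi^-,\qquad (S(\alpha)\phi)^+ = M^{-2}\phi^+.
\]

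Adding these two identities and using $\phi^+ - \phi^- = \sigma\phi$, together with the fact that $M$ preserves $V^\pm$, yields the single equation $S(\alpha)\phi = M^{-2}\sigma\phi$. Applying $\sigma M^2$ on the left and invoking $\sigma^2 = I$ and $[\sigma,M]=0$, this rearranges to $\phi = M^{2}\sigma S(\alpha)\phi$, i.e.\ \eqref{eqnbcE}. Conversely, starting from \eqref{eqnbcE} and left-multiplying by $M^{-2}\sigma$ recovers $S(\alpha)\phi = M^{-2}\sigma\phi$, and then the splitting $V = V^+ \oplus V^-$ (which $M$ preserves) forces the $\pm$-components to separate exactly into the two displayed matching conditions.

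There is no real obstacle here; the only subtle point worth flagging is the commutativity $[\sigma,L] = 0$, without which one could not bring $M^{\pm 1}$ past $(I\mp\sigma)$ and the clean identification with \eqref{eqnbcE} would fail. Everything else is formal manipulation.
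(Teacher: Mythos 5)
Your proof is correct and takes essentially the same approach as the paper's: substitute the explicit formula \eqref{eqnEalpha} into the matching conditions at $x=N$, use the commutation $\sigma L = L\sigma$ to pull the exponentials past $(I\mp\sigma)$, and collapse the two conditions into the single operator equation \eqref{eqnbcE}. The paper packages the computation slightly more compactly by evaluating the single vector $\BC_\alpha(E_{\alpha,\phi})$ from \eqref{eqn def MC alpha} and showing it equals $e^{-i\alpha NL}(\phi - e^{i\alpha N 2L}\sigma S(\alpha)\phi)$, whereas you keep the $\pm$ components separate and recombine by addition; the two organizations are interchangeable and both hinge on the same key point you flag, namely $[\sigma,L]=0$.
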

\begin{proof}
Let $\psi=S(\alpha)\phi$. Then by \eqref{eqn def MC alpha}
\begin{equation}\label{eqn BCEcalc}
\begin{aligned}
\BC_\alpha(E_{\alpha,\phi}) &= [e^{-i\alpha NL} \phi + e^{i\alpha NL} \psi]^-
+ \frac i\alpha[-i\alpha e^{-i\alpha NL} \phi + i\alpha e^{i\alpha NL} \psi]^+
\\
&= e^{-i\alpha NL}\left([\phi + e^{2i\alpha NL}\psi]^- + [\phi - e^{2i\alpha NL}\psi]^+ \right) \\
&= e^{-i\alpha NL} (\phi - e^{2i\alpha NL}\sigma\psi).
\end{aligned}
\end{equation}
Here we used that $\sigma L=L\sigma$. (This reflects the fact that two half-edges corresponding to the same edge have the same length.) $\Pi E_{\alpha,\phi}$ satisfies the matching conditions iff $\BC_\alpha(E_{\alpha,\phi})=0$, so the claim follows.
\end{proof}

We now analyze the solutions $(\alpha,\phi)$ of equation \eqref{eqnbcE}, and in particular their asymptotic behavior as $N\to\infty$.
It is convenient to introduce the rescaled variable
$$z := \alpha N.$$
Let
$$ U(z,\alpha)=e^{iz2L}\sigma S(\alpha)$$
for $z>0$, where $\alpha \in[0,\alphamax]$.
We need to study the zero set
\begin{equation}
\label{eqn def Z}
Z = \{(z,\alpha)\in (0,\infty)\times [0,\alphamax]:\ \det(I-U(z,\alpha))=0\}
\end{equation}
and to each $(z,\alpha)\in Z$ the eigenspace $\ker(I-U(z,\alpha))$.
Intersecting $Z$ with the line $z=\alpha N$ then gives the solutions of \eqref{eqnbcE}.

The $U(z,\alpha)$ are unitary operators on $V$. Essential for the sequel is the 'monotonicity' in $z$:
\begin{equation}
\label{eqn Umonotone}
\frac1i \frac{\partial U}{\partial z} U^{-1} = 2L>0.
\end{equation}
We begin with the case $\alpha=0$, which should be regarded as the limit $N\to\infty$.

\begin{proposition}
\label{prop alpha=0}
The set of $z$ for which $(z,0)\in Z$ consists of a sequence $0<z_1<z_2<\dots\to\infty$. If $\calP_{z_i}$ denotes the kernel of $I-U(z_i,0)$ then
\begin{equation}\label{eqn asymptotics}
 \sum_{i: z_i < A} \dim \calP_{z_i} = \frac{\tr L}{\pi} A + O(1),\quad A\to\infty.
\end{equation}
\end{proposition}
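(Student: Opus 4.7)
The plan is to exploit the fact that $U(z,0) = e^{iz2L}\sigma S(0)$ is a one-parameter family of unitary operators on the finite-dimensional space $V$ whose generator $2L$ is strictly positive, so that the eigenvalues rotate monotonically around the unit circle and the counting becomes essentially a spectral flow computation.

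First I would establish discreteness. Since $z \mapsto U(z,0)$ is analytic, the function $f(z) := \det(I - U(z,0))$ is entire. It is not identically zero: if $U(z,0)v(z) = v(z)$ held along a smooth curve $v(z) \neq 0$, differentiating and pairing with $v$ would give
\begin{equation*}
\langle \dot U v, v\rangle = \langle (I-U)\dot v, v\rangle = \langle \dot v, (I-U^{-1})v\rangle = 0,
\end{equation*}
since $Uv=v$ implies $U^{-1}v=v$; but by \eqref{eqn Umonotone}, $\langle \dot U v,v\rangle = i\langle 2L v,v\rangle \geq 2i\lmin\|v\|^2 \neq 0$, a contradiction. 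Hence the zero set of $f$ is a discrete subset of $(0,\infty)$, giving the sequence $0 < z_1 < z_2 < \dots$. The fact that it accumulates only at $+\infty$ follows automatically from discreteness together with the Weyl-type asymptotics to be proved next.

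For the counting formula, I would invoke the analytic perturbation theory for one-parameter families of unitary operators (the results collected in the Appendix): on $V$, whose dimension I call $n$, the eigenvalues of $U(z,0)$ can be written as $e^{i\theta_1(z)}, \dots, e^{i\theta_n(z)}$ with each $\theta_j$ real-analytic on $\R$ (with possible relabelings at crossings), chosen continuously so that at any point where the $j$-th eigenvector is a unit vector $v_j$, one has
\begin{equation*}
\dot\theta_j(z) = \langle 2L v_j, v_j\rangle \geq 2\lmin > 0.
\end{equation*}
Thus each $\theta_j$ is strictly increasing. Summing gives $\sum_j \dot\theta_j(z) = \tr(2L) = 2\tr L$ wherever the decomposition is smooth, so
\begin{equation*}
\sum_{j=1}^n \theta_j(z) = 2 z\,\tr L + \text{const}.
\end{equation*}

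Now $(z,0)\in Z$ with multiplicity $\dim\calP_z$ precisely when the number of indices $j$ with $e^{i\theta_j(z)} = 1$ equals $\dim \calP_z$. By strict monotonicity, for each $j$ the number of $z \in (0,A)$ with $\theta_j(z) \in 2\pi\Z$ equals $\frac{\theta_j(A)-\theta_j(0)}{2\pi} + O(1)$, with the $O(1)$ bounded independently of $A$. Summing over $j$ and using the trace formula,
\begin{equation*}
\sum_{i:\, z_i < A} \dim\calP_{z_i} = \sum_{j=1}^n \frac{\theta_j(A)-\theta_j(0)}{2\pi} + O(1) = \frac{2A\tr L}{2\pi} + O(1) = \frac{\tr L}{\pi} A + O(1),
\end{equation*}
which is the claim.

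The main obstacle I expect is handling eigenvalue crossings rigorously: the individual functions $\theta_j$ may only be chosen real-analytically after a permutation, and at a crossing point the identification of eigenvectors requires care. This is exactly what the Appendix on one-parameter families of unitary operators is designed for, and the strict positivity of $2L$ ensures that the monotonicity of each branch persists through crossings, so no multiplicity is lost or double-counted. The other minor point is verifying that the $O(1)$ error in the counting per branch is genuinely uniform, which follows because each $\theta_j$ is monotone and the contribution from the first and last partial period is bounded by $1$.
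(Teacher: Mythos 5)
Your proof is correct and follows essentially the same route as the paper: the paper simply invokes Lemma~\ref{lemma counting ev} from the Appendix with $D(z)=2L$ constant, which gives $\bigl|\sum_{z_i<A}\dim\calP_{z_i} - \tfrac{1}{2\pi}\int_0^A \tr(2L)\,dz\bigr| < \dim V$ and hence the claim immediately. What you have done is, in effect, reprove the special case of that lemma that is needed here, by tracking the eigenvalue phases $\theta_j$ and observing $\sum_j\dot\theta_j = \tr(2L)$. This is exactly the spectral-flow mechanism underlying Lemma~\ref{lemma counting ev}, so your argument is a correct and self-contained substitute for the citation; you could shorten it to a one-line appeal to that lemma, as the paper does.
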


This is obvious in the case $L=lI$, i.e.\ if all edges of the graph have equal lengths, since then
$\calP_{z}$ is just the eigenspace of $\sigma S(0)$ with eigenvalue $e^{-iz2l}$.
\begin{proof}
By monotonicity, equation \eqref{eqn Umonotone}, we can apply Lemma \ref{lemma counting ev} to the unitary family $z\mapsto U(z,0)$ with $D(z)=2L$, and this gives the result.
\end{proof}
The structure of $Z$ and the eigenspaces is then given as follows.
\begin{proposition}\label{propperturbedef}
Let $z_1,z_2,\dots$ be as in the previous proposition.
For each $i\in\N$ there are $r_i\in \N$ and pairwise different real analytic functions $z_i^\rho$, $\rho=1,\dots,r_i$, defined on  $[0,\alphamax]$, such that
\begin{equation}
\label{eqn initial}
z_i^\rho(0)=z_i \quad\text{for each } i,\rho
\end{equation}
 and
\begin{equation}
\label{eqn Z description}
Z = \bigcup_{i,\rho} \{(z_i^\rho(\alpha),\alpha):\ \alpha\in [0,\alphamax]\}.
\end{equation}
There is a constant $C_0$ such that
\begin{equation}
\label{eqn a' bound}
|\frac{d}{d\alpha}z_i^\rho(\alpha)| \leq C_0 \quad\text{ for all }i,\rho,\alpha.
\end{equation}

Furthermore, for each $i,\rho$ there is a real analytic family of orthogonal projections $P_{i}^\rho(\alpha)$ on $V$,
$\alpha\in [0,\alphamax]$, such that for each $(z,\alpha)\in Z$
\begin{equation}
\label{eqn proj for Z}
\sum_{i,\rho: z_i^\rho(\alpha)=z} P_i^\rho(\alpha) = \text{ the projection to }
\ker (I-U(z,\alpha)).
\end{equation}
\end{proposition}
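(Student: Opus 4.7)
The strategy is to apply the perturbation theory of real-analytic families of unitary operators (Theorem \ref{thm unitary perturb} of the Appendix) locally near each point of $Z$, and then use the monotonicity \eqref{eqn Umonotone} together with the implicit function theorem to parametrize each local branch of $Z$ as a graph $z = z_i^\rho(\alpha)$ over $\alpha \in [0,\alphamax]$. Fix $i\in\N$ and work in a neighborhood of $(z_i,0)$. By Proposition \ref{prop alpha=0}, $1$ is an isolated eigenvalue of $U(z_i,0)$ of multiplicity $m_i:=\dim\calP_{z_i}$. The perturbation theorem then yields, in a neighborhood of $(z_i,0)$, real-analytic functions $\theta_1,\dotsc,\theta_{m_i}$ of $(z,\alpha)$ and real-analytic families of pairwise orthogonal rank-one projections $Q_j(z,\alpha)$ such that $U Q_j = e^{i\theta_j} Q_j$, $\theta_j(z_i,0)=0$, and $\sum_j Q_j(z_i,0)$ equals the orthogonal projection onto $\calP_{z_i}$.

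Next I would exploit the monotonicity to convert $\theta_j(z,\alpha)=0$ into a graph $z=z_i^{(j)}(\alpha)$. Using $\partial_z U = 2iLU$, differentiating $U v_j = e^{i\theta_j} v_j$ (for a unit eigenvector $v_j$ in the range of $Q_j$) and pairing with $v_j$ gives the standard identity
\begin{equation*}
\partial_z \theta_j = 2\langle L v_j, v_j\rangle \geq 2\lmin > 0,
\end{equation*}
so the implicit function theorem produces a real-analytic solution $z_i^{(j)}(\alpha)$ near $\alpha=0$ with $z_i^{(j)}(0)=z_i$. Implicit differentiation together with $|\partial_\alpha \theta_j|\leq C\|S'(\alpha)\|$ (obtained from the perturbation formula and the holomorphy of $S(\alpha)$ on a neighborhood of $[0,\alphamax]$) gives the uniform bound \eqref{eqn a' bound}. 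To obtain pairwise distinct branches, group the $z_i^{(j)}$ by equality as germs of real-analytic functions; label the resulting equivalence classes $\rho=1,\dotsc,r_i$ with representative $z_i^\rho$ and define
\begin{equation*}
P_i^\rho(\alpha) := \sum_{j:\, z_i^{(j)}=z_i^\rho} Q_j(z_i^\rho(\alpha),\alpha),
\end{equation*}
which is an orthogonal projection depending real-analytically on $\alpha$, and $\sum_\rho P_i^\rho(0)$ equals the projection onto $\calP_{z_i}$.

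Finally I would globalize: the entire local argument depends only on the fact that $1$ is an eigenvalue of $U(z,\alpha)$ and on the monotonicity, both of which are present at every point of $Z$. Therefore each branch $z_i^\rho$ extends by analytic continuation throughout $\alpha\in[0,\alphamax]$, the bound \eqref{eqn a' bound} keeping it in a bounded region of the $z$-axis so that it cannot escape on the compact interval. For \eqref{eqn Z description}, every $(z,\alpha)\in Z$ lies on a local branch by the same perturbation argument applied at $(z,\alpha)$, and tracing this branch back to $\alpha=0$ (using uniformity) identifies it with some $z_i^\rho$. The projection identity \eqref{eqn proj for Z} is then immediate: at any $(z,\alpha)\in Z$, $\ker(I-U(z,\alpha))$ decomposes orthogonally as the direct sum of the ranges of $P_i^\rho(\alpha)$ over those $(i,\rho)$ with $z_i^\rho(\alpha)=z$, since these index precisely the local branches passing through the point. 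The main obstacle is the globalization step: in general, two-parameter perturbation theory can produce monodromy (Puiseux branching) when eigenvalues cross. Here the resolution is that the strict inequality $\partial_z\theta_j>0$ makes the locus $\theta_j=0$ transverse in $z$ at every point of $Z$, so any collisions in $(z,\alpha)$-space only amount to coincidences among the $z_i^{(j)}$ rather than singularities of the parametrization, and the monodromy is confined to the labeling of the $Q_j$ within each branch.
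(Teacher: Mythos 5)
Your overall plan — invoke Theorem \ref{thm unitary perturb} locally, obtain each branch as a graph $z=z_i^\rho(\alpha)$ via monotonicity in $z$, derive the derivative bound by differentiating the eigenvalue equation and pairing with the eigenvector, then globalize — is the same strategy the paper uses. However, there is a genuine gap in the way you set up the local step.

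You claim that near $(z_i,0)$ the perturbation theorem furnishes jointly real-analytic eigenvalue functions $\theta_j(z,\alpha)$ and jointly real-analytic eigenprojections $Q_j(z,\alpha)$, and then you run the implicit function theorem on $\theta_j(z,\alpha)=0$. But Theorem \ref{thm unitary perturb} does not assert this, and in fact it cannot: for a real-analytic \emph{two}-parameter family of unitary (or Hermitian) operators, eigenvalues and eigenprojections are in general not even continuously chosen as functions of both parameters jointly (the standard obstruction — you name it yourself at the end: Puiseux branching at crossings). The theorem instead states something strictly weaker and exactly tailored to avoid this: that the locus $\{U(z,\alpha)\text{ has eigenvalue }1\}$ near $(z_i,0)$ is a finite union of analytic curves $z=z_j(\alpha)$, and that the \emph{one-parameter} projection families $P_j(\alpha)$ (defined along each curve) are analytic. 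So your formula $z'=-\partial_\alpha\theta_j/\partial_z\theta_j$ rests on a quantity $\partial_\alpha\theta_j$ that need not exist.

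The fix is to bypass $\theta_j$ entirely, which is what the paper does: with $z(\alpha)=z_i^\rho(\alpha)$ and $\phi(\alpha)\in\operatorname{Ran}P_i^\rho(\alpha)$ a normalized analytic choice of eigenvector \emph{along the branch}, differentiate the one-parameter identity $U(z(\alpha),\alpha)\phi(\alpha)=\phi(\alpha)$ in $\alpha$ and pair with $\phi$. The term $\langle U\phi',\phi\rangle=\langle\phi',\phi\rangle$ cancels, and using $\phi=U^{-1}\phi$ and \eqref{eqn Umonotone} one gets
\begin{equation*}
z'(\alpha) \;=\; -\,\frac{\bigl\langle e^{iz2L}\tfrac1i\sigma S'(\alpha)\phi,\phi\bigr\rangle}{\langle 2L\phi,\phi\rangle},
\end{equation*}
which is bounded by $\|S'(\alpha)\|/(2\lmin)$ uniformly in $i,\rho,\alpha$. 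This yields \eqref{eqn a' bound} without ever invoking joint analyticity. The remainder of your argument (grouping coinciding branches into $z_i^\rho$, summing the corresponding $P$'s, patching to $[0,\alphamax]$ using the a priori bound on $|z'|$, and verifying \eqref{eqn Z description}, \eqref{eqn proj for Z} since every point of $Z$ lies on one of the branches produced by the theorem) is sound once the local step is corrected.
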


Thus, $Z$ is the union of graphs of functions of $\alpha$ with bounded derivatives, see Figure \ref{figure2}. The 'non-linear eigenvalues' $z_i$ at $\alpha=0$ may bifurcate into various $z_i^\rho$ as $\alpha$ increases.  The sum in \eqref{eqn proj for Z} is over the various branches that meet at $(z,\alpha)$, so for almost all $(z,\alpha)$ it has only one term (in particular $P_i^\rho$ is uniquely determined).
\begin{figure}
\includegraphics{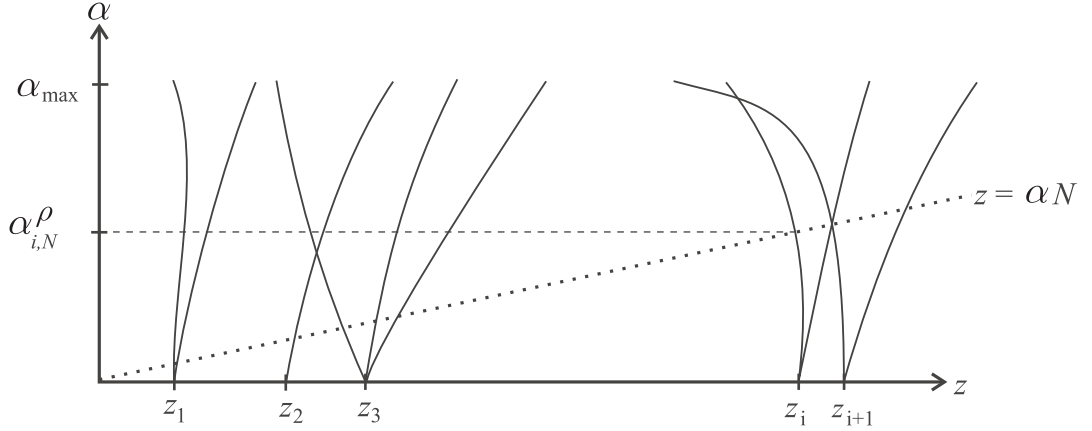}
\caption{The solutions of $\det(I-U(z,\alpha))=0$}
\label{figure2}
\end{figure}
\begin{proof}
Theorem \ref{thm unitary perturb} shows that $Z$ is everywhere locally a union of graphs of analytic functions, and these can be patched to functions on all of $[0,\alphamax]$. The theorem gives also the projections. Applying the theorem at $(z_i,0)$ gives \eqref{eqn initial}.

It remains to prove \eqref{eqn a' bound}.
If $z(\alpha)=z_i^\rho(\alpha)$ and $\phi(\alpha)\in\Ran P_i^\rho(\alpha)$ is normalized and chosen analytic in $\alpha$ then differentiating $U(z(\alpha),\alpha) \phi(\alpha) = \phi(\alpha)$ gives
$z' (\partial_z U) \phi + (\partial_\alpha U)\phi + U\phi' = \phi'$. Taking the scalar product with $\phi$ one obtains, since $\la U\phi',\phi\ra = \la\phi',U^{-1}\phi\ra=\la\phi',\phi\ra$, that $z'\la(\partial_z U)\phi,\phi\ra + \la(\partial_\alpha U)\phi,\phi\ra=0$ and hence, using $\phi=U^{-1}\phi$ and  \eqref{eqn Umonotone},
$$ z' = -\frac{\la e^{iz2L}\frac1i \sigma S'(\alpha)\phi,\phi\ra}{\la 2L\phi,\phi\ra},$$
which is uniformly bounded as claimed.
\end{proof}
We now study the solutions of \eqref{eqnbcE} and for this the intersection of $Z$ with the line $z=\alpha N$, for fixed $N$. The following is quite obvious from Figure \ref{figure2}.

\begin{theorem} \label{thmscattef1}
\begin{itemize}
\item[(a)]
Fix arbitrary $i,\rho$. Then there is a unique point
\begin{equation}
\label{eqn def alpha i rho N}
(\alpha_{i,N}^\rho,z_{i,N}^\rho) = \text{  intersection point of }
z = z_i^\rho(\alpha)\text{ and } z=\alpha N
\end{equation}
whenever $N\geq N_{0,i}:= C_0 + \frac{z_i}{\alphamax}$ with $C_0$ from \eqref{eqn a' bound}.

Furthermore, $z_{i,N}^\rho$ is real analytic in $\frac1N$, i.e.\ $z_{i,N}^\rho = Z_i^\rho(\frac1N)$ for a function $Z_i^\rho$ which is real analytic on $[0,(N_{0,i})^{-1}]$. Also,
$Z_i^\rho(0) = z_i$.
\item[(b)]
The pairs $(\alpha,\phi)$, $0<\alpha\leq\alphamax$, for which $\Pi E_{\alpha,\phi}$ satisfies the matching conditions \eqref{eqn matching u} are given by
\begin{align}
\alpha &= \alpha_{i,N}^\rho =\frac1N z_{i,N}^\rho \label{alphasol}\\
\phi &\in \calP_{\alpha,N}:=\bigoplus_{i,\rho:\,\alpha^{\rho}_{i,N}=\alpha} \Ran P_{i}^\rho(\alpha) \label{phisol}
\end{align}
for $i=1,2,\dots$ satisfying $N\geq N_{0,i}$ and $\rho=1,\dots,r_i$.
The $P_i^\rho$ are defined in Proposition \ref{propperturbedef}.
\end{itemize}
\end{theorem}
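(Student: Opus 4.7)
The plan is to deduce Theorem \ref{thmscattef1} directly from Proposition \ref{propperturbedef} and Lemma \ref{lemmabcE} by a simple intersection argument: part (a) will follow from the intermediate value theorem together with the implicit function theorem applied in the parameter $t=1/N$, and part (b) is then just the combination of these with the characterization of the matching condition via $U(\alpha N,\alpha)$.

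For part (a), I would study the real analytic function
\begin{equation*}
g_{i,N}^\rho(\alpha) := z_i^\rho(\alpha) - \alpha N
\end{equation*}
on $[0,\alphamax]$. By \eqref{eqn initial} we have $g_{i,N}^\rho(0) = z_i > 0$, and by the derivative bound \eqref{eqn a' bound},
\begin{equation*}
\frac{d}{d\alpha}g_{i,N}^\rho(\alpha) \leq C_0 - N,
\end{equation*}
which is strictly negative as soon as $N>C_0$. One checks that $g_{i,N}^\rho(\alphamax) \leq z_i + C_0\alphamax - N\alphamax \leq 0$ precisely when $N \geq N_{0,i} = C_0 + z_i/\alphamax$, so for such $N$ the intermediate value theorem plus monotonicity yields a unique zero $\alpha_{i,N}^\rho \in (0,\alphamax]$. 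To obtain analyticity in $t:=1/N$, I would apply the implicit function theorem to
\begin{equation*}
F(\alpha,t) := \alpha - t\, z_i^\rho(\alpha),
\end{equation*}
which satisfies $F(0,0)=0$ and $\partial_\alpha F(0,0)=1$, producing a local analytic branch $\alpha(t)$ with $\alpha(0)=0$. Since $\partial_\alpha F = 1 - t(z_i^\rho)'(\alpha) \geq 1 - C_0 t > 0$ throughout $t \in [0,N_{0,i}^{-1}]$ (using $N_{0,i}>C_0$), this branch extends analytically to the full interval and, by uniqueness, agrees with $\alpha_{i,N}^\rho$. Setting $Z_i^\rho(t) := z_i^\rho(\alpha(t))$ then gives the claimed real analytic function with $Z_i^\rho(0) = z_i$.

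Part (b) will then be immediate: by Lemma \ref{lemmabcE}, $\Pi E_{\alpha,\phi}$ satisfies the matching conditions exactly when $(I - U(\alpha N,\alpha))\phi = 0$, which is equivalent to $(\alpha N, \alpha) \in Z$ together with $\phi \in \ker(I - U(\alpha N,\alpha))$. By Proposition \ref{propperturbedef}, the first condition means $\alpha N = z_i^\rho(\alpha)$ for some $(i,\rho)$, and by part (a) this holds precisely when $\alpha = \alpha_{i,N}^\rho$ with $N \geq N_{0,i}$. Formula \eqref{eqn proj for Z} identifies the kernel with the direct sum $\calP_{\alpha,N}$, accommodating the possibility that several branches $(z_i^\rho(\cdot))$ meet at a common intersection point with the line $z=\alpha N$.

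The content of this theorem is genuinely light: all the serious spectral work is packed into Proposition \ref{propperturbedef} and the derivative bound \eqref{eqn a' bound}. The only slightly delicate point is verifying that the local IFT branch extends across the full interval $[0,N_{0,i}^{-1}]$; that extension rests on the very same derivative bound that secured uniqueness in the intersection argument, so no real obstacle arises.
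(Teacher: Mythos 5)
Your proof is correct and follows essentially the same route as the paper's: both deduce existence and uniqueness of the intersection from the derivative bound \eqref{eqn a' bound} together with the sign change of a scalar function (the paper works with $d(z)=z-z_i^\rho(tz)$, you with the equivalent $g(\alpha)=z_i^\rho(\alpha)-\alpha N$), obtain analyticity in $t=1/N$ via the implicit/inverse function theorem (justified throughout $[0,N_{0,i}^{-1}]$ by $1-t(z_i^\rho)'>0$), and reduce part (b) to Lemma \ref{lemmabcE} plus Proposition \ref{propperturbedef}. Your write-up is, if anything, slightly more explicit than the paper's about why the local IFT branch extends to the whole interval.
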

\begin{proof}
(a) Fix $i,\rho$. The $z$-coordinates of the intersection points \eqref{eqn def alpha i rho N} are the solutions of $z=z_i^\rho(tz)$, where $t=\frac1N$, that is, the zeroes of the function $d(z)=z-z_i^\rho(tz)$. We have $d(0)=-z_i<0$ and $d(\frac\alphamax t)= \frac\alphamax t - z_i^\rho(\alphamax) \geq 0$ for $t\leq \tmax:= (N_{0,i})^{-1}$ since $z_i^\rho(\alphamax)\leq z_i + C_0\alphamax$ by integration of the bound \eqref{eqn a' bound}. Therefore, $d$ has a zero $z$ for each $t\leq\tmax$. The zero $z$ is unique since
$d'(z)=1-t(z_i^\rho)'(tz)>0$ for $t<1/C_0$ (which is satisfied for $t\leq\tmax$).
Clearly, for $t=0$ the solution is $z=z_i$ by \eqref{eqn initial}, and the inverse function theorem gives the analytic dependence on $t$.

(b)
By Lemma \ref{lemmabcE}, $\Pi E_{\alpha,\phi}$ satisfies the matching conditions iff \eqref{eqnbcE} is satisfied. The claim then follows from Proposition \ref{propperturbedef}.
\end{proof}
A more precise analysis yields the uniform behavior of the functions $Z_i^\rho$ with respect to $i\to\infty$: $Z_i^\rho(t) = z_i + h_i^\rho(tz_i)$ for functions $h_i^\rho$ which vanish at zero and have bounds on their derivatives independent of $i$.
We will omit the proof since we don't need it. Instead, we prove the following slightly weaker consequence of Theorem \ref{thmscattef1} about special regimes of solutions:
\begin{theorem}
\label{thm alphaleadingterm}
Given $\alpha_0\in [0,\alphamax]$ and $\delta>0$, $C>0$, the values of $\alpha$ in \eqref{alphasol} in the interval
$|\alpha-\alpha_0| < CN^{-\delta}$ satisfy
\begin{equation}
\label{eqn alphaleadingterm b}
\alpha = \frac{b_i^\rho}N + O(\frac 1 {N^{1+\delta}})
\end{equation}
where $b_i^\rho = z_i^\rho(\alpha_0)$, that is, $b=b_i^\rho$ are the solutions of
\begin{equation}
\label{eqn birho}
\det(I-e^{ib2L}\sigma S(\alpha_0))=0,
\end{equation}
and only the range $|b_i^\rho - N\alpha_0| < C N^{1-\delta}$ is considered. The constant implied in \eqref{eqn alphaleadingterm b} is uniform in $i,\rho,N$ and $\delta$.
\end{theorem}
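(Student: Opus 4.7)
My plan is to deduce this essentially from the bound \eqref{eqn a' bound} on the derivatives of the branch functions $z_i^\rho$, treating the theorem as a first-order linearization of Theorem \ref{thmscattef1} around the point $\alpha=\alpha_0$ rather than around $\alpha=0$.

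First, I would recall from Theorem \ref{thmscattef1}(b) and \eqref{alphasol} that the admissible values of $\alpha$ are exactly the abscissae of the intersection points of the graphs $z=z_i^\rho(\alpha)$ with the line $z=\alpha N$; equivalently, $\alpha$ solves $N\alpha = z_i^\rho(\alpha)$ for some $i,\rho$. The numbers $b_i^\rho = z_i^\rho(\alpha_0)$ likewise describe the intersections with the horizontal line $z=$const at level $\alpha_0$, and by \eqref{eqn def Z} together with Proposition \ref{propperturbedef} these are exactly the positive solutions of \eqref{eqn birho}. So the content of the theorem is that, in the regime $|\alpha-\alpha_0|<CN^{-\delta}$, replacing $z_i^\rho(\alpha)$ by $z_i^\rho(\alpha_0)$ introduces only an error of order $N^{-\delta}$ in $z$, hence $N^{-1-\delta}$ in $\alpha$.

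The main estimate is then immediate from the mean value theorem applied to $z_i^\rho$: for any $\alpha$ with $|\alpha-\alpha_0|<CN^{-\delta}$, the bound \eqref{eqn a' bound} yields
\begin{equation*}
|z_i^\rho(\alpha) - z_i^\rho(\alpha_0)| \leq C_0 |\alpha-\alpha_0| < C_0 C N^{-\delta}.
\end{equation*}
If $\alpha$ satisfies $N\alpha = z_i^\rho(\alpha)$, substituting $b_i^\rho = z_i^\rho(\alpha_0)$ gives
\begin{equation*}
N\alpha = b_i^\rho + O(N^{-\delta}),
\end{equation*}
with an $O$-constant depending only on $C$ and the universal constant $C_0$ from \eqref{eqn a' bound}. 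Dividing by $N$ yields \eqref{eqn alphaleadingterm b}, and uniformity in $i,\rho,N,\delta$ is inherited from the uniformity of $C_0$.

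Finally, to pin down the range of $b_i^\rho$ that appears: the condition $|\alpha-\alpha_0|<CN^{-\delta}$ translates via $b_i^\rho = N\alpha + O(N^{-\delta})$ into $|b_i^\rho - N\alpha_0| \leq CN^{1-\delta} + O(N^{-\delta})$, which, by absorbing the subdominant term into a slightly larger constant (still called $C$), becomes $|b_i^\rho - N\alpha_0| < CN^{1-\delta}$. There is essentially no ``hard part'' here --- the theorem is a quantitative repackaging of Theorem \ref{thmscattef1} whose only input is the uniform Lipschitz bound \eqref{eqn a' bound}; the only minor care needed is in book-keeping of the constants so that the quantifier structure matches (``for every $C,\delta$ there is a constant implied in the $O$''), and in noting that the same $b_i^\rho$ may correspond to several branches $(i,\rho)$ when $z_i^\rho(\alpha_0)$ coincide, a multiplicity issue already accommodated by allowing the ranges of $i$ and $\rho$ to vary independently.
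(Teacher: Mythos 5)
Your proposal is correct and takes essentially the same approach as the paper: both proofs reduce the statement to the uniform Lipschitz bound \eqref{eqn a' bound}, substitute $z_i^\rho(\alpha_0)$ for $z_i^\rho(\alpha)$ in the fixed-point equation $N\alpha = z_i^\rho(\alpha)$, and translate the hypothesis $|\alpha-\alpha_0|<CN^{-\delta}$ into the stated range for $b_i^\rho$. The only cosmetic difference is that the paper works with the variable $t=1/N$ whereas you keep $N$ throughout; the interpretive remarks you add about $b_i^\rho$ being the intersections with the horizontal line $\alpha=\alpha_0$ and about coinciding values $z_i^\rho(\alpha_0)$ are correct and harmless.
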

In particular, for $\alpha_0=0,$ $\delta=1$ we get: The solutions $\alpha$ with $\alpha<C/N$ are of the form
\begin{equation}
\label{eqn alphaleadingterm a}
\alpha = \frac{z_i}N + O(\frac1{N^2})
\end{equation}
This is already clear from Theorem \ref{thmscattef1}a).
\begin{proof}
Let $t=1/N$. If $\alpha=\alpha_i^\rho(t)$ satisfies $|\alpha-\alpha_0|<Ct^\delta$ then $|z_i^\rho (\alpha) - z_i^\rho(\alpha_0)| < C_0C t^\delta$ by \eqref{eqn a' bound}. From $\alpha=tz_i^\rho(\alpha)$ it then follows that $|\alpha-tb_i^\rho| < C_0C t^{\delta+1}$, that is, \eqref{eqn alphaleadingterm b}. Finally, this estimate and $|\alpha-\alpha_0|<Ct^\delta$ imply $|tb_i^\rho - \alpha_0| < Ct^\delta (1+C_0t)$, and this yields the restricted range for $b_i^\rho$.
\end{proof}

We will also need the following stable version of Theorem \ref{thmscattef1}b).
\begin{theorem}
\label{thm stability}
Let $N>0$. Assume $\alpha\in (0,\alphamax)$ and $\phi\in V$ are such that   $\Pi E_{\alpha,\phi}$ satisfies the matching conditions up to an error
\begin{equation}
\label{eqn BCsmall}
 \| \BC_\alpha(E_{\alpha,\phi}) \|_{V} \leq \delta \| \phi \|_{V},
\end{equation}
for some $\delta>0$.
\begin{enuma}
\item
Then there is $i$, $\rho$ with
\begin{align}
|\alpha-\alpha_{i,N}^\rho| & \leq \frac2\lmin \frac\delta{N}.
\label{diffestimate1}
\end{align}
\item
Furthermore, there is $C>0$ such that, if $\delta < C^{-1}$ and $N>C$, then the sum $\calP_{\alpha,\delta,N}=\bigoplus_\beta \calP_{\beta,N}$, with $\beta$ ranging over $|\alpha-\beta|\leq N^{-1}\sqrt{\delta}$, is direct and, if $P$ is the projection to $\calP_{\alpha,\delta,N}$, then
\begin{equation}
\label{eqn diffestimate2}
\|\phi-P\phi\|_V \leq \delta^{1/(2+2\dim V)}\,\|\phi\|_V
\end{equation}
\end{enuma}
\end{theorem}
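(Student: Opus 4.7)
The plan is to recast the hypothesis in terms of the unitary family $U(z,\alpha)=e^{iz2L}\sigma S(\alpha)$. Since $e^{-i\alpha NL}$ is unitary, the computation \eqref{eqn BCEcalc} gives $\|\BC_\alpha(E_{\alpha,\phi})\|_V=\|(I-U(\alpha N,\alpha))\phi\|_V$, so \eqref{eqn BCsmall} reads
$$\|(I-U(\alpha N,\alpha))\phi\|_V\leq\delta\|\phi\|_V.$$
All subsequent work then takes place inside the unitary family $U$ and exploits its monotonicity \eqref{eqn Umonotone}.

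For (a), the spectral theorem produces a unit eigenvector $\phi_0$ of $U(\alpha N,\alpha)$ with eigenvalue $e^{i\theta_0}$ satisfying $|1-e^{i\theta_0}|\leq\delta$, hence $|\theta_0|\leq(\pi/2)\delta$. Keep $\alpha$ fixed and vary $z$: the identity $(\partial_z U)U^{-1}=i\,2L$ together with $2L\geq 2\lmin\, I$ forces the analytic branch of eigenvalues through $(\alpha N,e^{i\theta_0})$ to satisfy $\theta'(z)\geq 2\lmin$, so shifting $z$ by at most $|\theta_0|/(2\lmin)$ makes the eigenvalue hit $1$ at some $z^*$ with $(z^*,\alpha)\in Z$, i.e.\ $z^*=z_i^\rho(\alpha)$ for some $(i,\rho)$. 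Writing $N(\alpha-\alpha_{i,N}^\rho)=(N\alpha-z^*)+(z_i^\rho(\alpha)-z_i^\rho(\alpha_{i,N}^\rho))$ and using $N\alpha_{i,N}^\rho=z_i^\rho(\alpha_{i,N}^\rho)$ together with the slope bound $|(z_i^\rho)'|\leq C_0$ from \eqref{eqn a' bound} yields $|\alpha-\alpha_{i,N}^\rho|\leq|\theta_0|/(2\lmin(N-C_0))$, which gives \eqref{diffestimate1} once $N$ is large enough compared with $C_0$.

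For (b), expand $\phi=\sum_j c_j\phi_j$ in an orthonormal eigenbasis $U(\alpha N,\alpha)\phi_j=e^{i\theta_j}\phi_j$. The hypothesis gives $\sum_j|c_j|^2|1-e^{i\theta_j}|^2\leq\delta^2\|\phi\|^2$, so a Chebyshev-type argument shows that the component $\phi^{\rm fr}$ of $\phi$ coming from indices with $|1-e^{i\theta_j}|\geq\eta$ has norm $\leq(\delta/\eta)\|\phi\|$. Choosing $\eta$ proportional to $\sqrt\delta$, part (a) applied to each remaining $\phi_j$ locates an $\alpha_{i,N}^\rho$ within distance $\lesssim\eta/(\lmin N)\leq N^{-1}\sqrt\delta$ of $\alpha$, so each corresponding $\Ran P_i^\rho(\alpha_{i,N}^\rho)$ sits inside $\calP_{\alpha,\delta,N}$.

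The hardest step is to show that the span of these near-unit eigenvectors is itself close to $\calP_{\alpha,\delta,N}$: one cannot identify individual $\phi_j$'s with individual vectors in $\Ran P_i^\rho(\alpha_{i,N}^\rho)$, because eigenvalue clustering makes the standard holomorphic perturbation estimate for simple eigenprojections blow up. I would handle this by induction on $\dim V$: use Theorem \ref{thm unitary perturb} of the Appendix to diagonalize $U(z,\alpha)$ analytically along one branch $z=z_i^\rho(\beta)$, apply a Cauchy-integral estimate to the total spectral projection onto an arc just large enough to isolate the remaining cluster, peel off that branch and recurse on its orthogonal complement. Each recursion absorbs a potential near-degeneracy at the cost of an additional square-root factor, so $\dim V$ iterations produce the exponent $1/(2+2\dim V)$ in \eqref{eqn diffestimate2}. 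The smallness $\delta<C^{-1}$ and largeness $N>C$ guarantee that all shifted $\alpha$-values stay inside $(0,\alphamax)$ with $N\geq N_{0,i}$ at every step; the main technical obstacle throughout is obtaining these estimates uniformly, without assuming any a priori separation of the eigenvalues of $U$.
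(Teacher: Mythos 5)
The paper's proof is much shorter than yours: after the same initial reduction, it does not fix $\alpha$ and vary $z$, but instead views the hypothesis as $\|(I-U(z))\phi\|\leq\delta\|\phi\|$ for the \emph{single-parameter} family $U(z):=e^{iz2L}\sigma S(z/N)$, i.e.\ the restriction of $U(z,\alpha)$ to the constraint line $\alpha=z/N$. This choice is the key simplification: for this family the zero set $\calZ$ of $z\mapsto\det(I-U(z))$ is \emph{exactly} $\{N\alpha_{i,N}^\rho\}$ and the kernel spaces $W(z)$ are \emph{exactly} the $\calP_{z/N,N}$, so both (a) and (b) are literally \eqref{eqn dist to Z} and \eqref{eqn dist to sum of W} of Lemma~\ref{lemma stability} once one checks monotonicity ($\frac1i U'U^{-1}=2L+\frac1{iN}US^{-1}S'U^{-1}\geq\lmin\,I$ for large $N$). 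The paper then simply cites that lemma (proved in \cite{Gri:MUF}) and stops.

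Your proof of (a) works, though it takes a detour: by freezing $\alpha$ you first find $z^*$ with $(z^*,\alpha)\in Z$ and then have to convert, via the slope bound \eqref{eqn a' bound}, to a distance from the diagonal $z=\alpha N$. This is fine but is more work than the paper's choice of family, which avoids the conversion altogether.

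Your proof of (b) has a genuine gap stemming from the same suboptimal choice. When you apply (a) to each near-unit eigenvector $\phi_j$ of $U(\alpha N,\alpha)$, the eigenspaces you can hope to approach by analytic perturbation in $z$ at fixed $\alpha$ are $\Ran P_i^\rho(\alpha)$ (the kernels of $I-U(z_i^\rho(\alpha),\alpha)$), whereas the target spaces $\calP_{\alpha_{i,N}^\rho,N}$ are built from $\Ran P_i^\rho(\alpha_{i,N}^\rho)$ at the \emph{different} parameter $\alpha_{i,N}^\rho$. You never address the transition from one to the other; it is controllable because $P_i^\rho$ is real analytic in $\alpha$ and $|\alpha-\alpha_{i,N}^\rho|$ is small, but it is an additional estimate that must be made explicit and cannot just be elided. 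Moreover the recursive ``peel one branch and recurse'' argument as you sketch it would naturally produce an exponent like $\delta^{2^{-(M+1)}}$ from $M$ nested square roots, not $\delta^{1/(2(M+1))}$; to get the stated exponent one needs a sharper argument, which is precisely the content of Lemma~\ref{lemma stability}. The cleaner route is to use the constrained family $U(z)=U(z,z/N)$, after which (b) is an immediate consequence of \eqref{eqn dist to sum of W} with no transition step.
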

\begin{proof}
By \eqref{eqn BCEcalc} we have
$e^{i\alpha NL} \BC_\alpha(E_{\alpha,\phi})=\left(I - e^{i\alpha N2L}\sigma S(\alpha)\right)\phi,$
so \eqref{eqn BCsmall} implies $\|(I - e^{i\alpha N2L}\sigma S(\alpha))\phi\| \leq \delta\|\phi\|$. Write $z=\alpha N$ and
\begin{equation}
\label{eqn Uperturbed def}
U(z)=e^{iz2L}\sigma S(\frac z N),
\end{equation}
then this reads
\begin{equation}
\label{eqn  eqn I-U small}
\|(I-U(z))\phi\| \leq \delta\,\|\phi\|.
\end{equation}
Now $z\mapsto U(z)$ is a monotone unitary family in the sense of \eqref{eqn monotone}, \eqref{eqn dminmax def} since, with a prime denoting derivative in $z$,
$$ \frac1i U'U^{-1} = 2L + \frac 1 {iN} U S^{-1}S' U^{-1}$$
where $U$ is evaluated at $z$ and $S$ at $z/N$, and for $N$ sufficiently large and $\alpha=z/N$ bounded this is positive with bounds as in \eqref{eqn dminmax def}, where $\dmin$ can be taken as $\lmin$.

Therefore, we can apply Lemma \ref{lemma stability} with $\epsilon= \delta$. \eqref{diffestimate1} follows from \eqref{eqn dist to Z} (remember $\alpha=z/N$) and \eqref{eqn diffestimate2} from \eqref{eqn dist to sum of W}.
\end{proof}

\section{Elliptic estimates} \label{secestef}
We first state some fairly standard elliptic estimates on the 'compact part'
$$X^0 := \bigcup_{v\in V} X_v$$
of $X^\infty$. Since we consider the scattering theory results in Section \ref{sec scatt} as a black box in this article, we derive them from those results. In a more thorough and systematic treatment, they could be derived directly from the theory of elliptic boundary value problems and then used to derive the scattering theory results. However, the boundary value problem is slightly non-standard since it involves a non-local (pseudo-differential) boundary operator.

The boundary $\partial X^0$ splits in two parts: The part where the cylinders $Z^N$ are attached, which we may identify with $Y$ (which we sometimes also call $\{x=0\}$), and the complement $\partial X^0 \setminus Y$. At the latter, we have boundary conditions given by the D/N decomposition (resp. Robin data). At $Y$ we will now impose boundary conditions motivated from the scattering theory.

For $u\in C^\infty(X^0)$ and $\lambda<\nu_1$ let
$$\Bperp(\lambda)u := \Piperp\partial_\xi  u_{|Y} - Q(\lambda) (\Piperp u_{|Y}),$$
where for $\phi_k\in V_k$, $k\in\N$,
$$ Q(\lambda) (\sum_{k=1}^\infty \phi_k) := - \sum_{k=1}^\infty \sqrt{\nu_k - \lambda} \phi_k,$$
if the sums converge. $Q(\lambda)$ is the $\Piperp$ part of the Dirichlet to Neumann operator for exponentially decreasing solutions of $(\Delta_{Z^\infty}+\lambda)\utilde = 0$, see \eqref{eqn3}.
Thus, $ \Bperp(\lambda)u=0$ iff $\Piperp\utilde$ has no exponentially increasing part, where $\utilde$ is the unique function on $Z^\infty$ satisfying $(\Delta_{Z^\infty}+\lambda)\utilde=0$ and having the same value and normal derivative at $x=0$ as $u$ at $Y$.

Consider the operator $\Delta_{X^0}$ with domain $\Dom(\Delta_{X^0})\subset H^2(X^0)$ defined by the D/N boundary conditions (resp. Robin data) at $\partial X^0\setminus Y$.
In order to obtain a selfadjoint extension of $\Delta_{X^0}$ we need in addition to impose boundary conditions at $Y$. In addition to the condition $\Bperp(\lambda)u=0$ we need a condition involving $u^0:=\Pi u_{|Y}$, $u^1:= \Pi \partial_\xi u_{|Y}$. It is well-known and easy to check that selfadjoint boundary conditions correspond to subspaces $\calL\subset V\times V$ which are {\em Lagrangian}, i.e.\ such that $\dim\calL=\dim V$ and
$$ (u^0,u^1), (v^0,v^1)\in \calL \Longrightarrow \langle u^0,v^1\rangle - \langle v^0,u^1\rangle = 0.$$
Thus, for $\calL$ Lagrangian and any $\lambda$ the operator $\Delta_{X^0}$ is selfadjoint on the domain $\{u\in\Dom(\Delta_{X^0}:\, \Bperp(\lambda)=0,\, (u^0,u^1)\in\calL\}$.

We have the following elliptic estimates.
\begin{lemma}
\label{lemma basic ell estimate}
Let $\alpha\in [0,\alphamax]$. Assume $\lambda=\nu+\alpha^2$ is not an $L^2$ eigenvalue of $-\DeltaXinfty$. Let $\calL_\alpha$ be the scattering subspace \eqref{eqn def scatt subspace}, and let $\calL_\alpha'$ be a Lagrangian subspace of $V\times V$ such that $\calL_\alpha\cap\calL'_\alpha=\{0\}$.
Also, let $P_{\calL_\alpha,\calL'_\alpha}:V\times V\to \calL_\alpha$ be the projection along $\calL_\alpha'$.
There is a constant $C$ so that if $u\in C^\infty(X^0)$ satisfies the D/N (resp. Robin) boundary conditions at $\partial X^0\setminus Y$ and
\begin{equation}
\label{eqn ell system}
\begin{aligned}
(\Delta + \lambda)u &= f & \text{ in } X^0\\
\Bperp(\lambda)u     &= g & \text{ at } Y\\
P_{\calL_\alpha,\calL'_\alpha}(u^0,u^1)           & = h
\end{aligned}
\end{equation}
then
\begin{equation}
\label{eqn basic ell estimate}
\|u\|_{H^2(X^0)} \leq C (\; \|f\|_{L^2(X^0)} + \|g\|_{H^{1/2}(Y)} + \|h\|_{V\times V}\;).
\end{equation}
If $\alpha$ is varied and the $\calL_\alpha'$ depend continuously on $\alpha$ then the constant $C$ can be chosen independent of $\alpha$.
\end{lemma}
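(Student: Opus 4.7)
The plan is to prove the lemma in three standard stages: establish a Fredholm-type a priori estimate with an added $\|u\|_{L^2(X^0)}$ term on the right by checking that \eqref{eqn ell system} is elliptic in the Shapiro--Lopatinskii sense; show that the corresponding homogeneous problem has trivial kernel, crucially using the non-$L^2$-eigenvalue hypothesis; and then combine these via a compactness argument to remove the $\|u\|_{L^2(X^0)}$ term.

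For the Fredholm estimate, the boundary conditions on $\partial X^0\setminus Y$ are the standard local Dirichlet/Neumann/Robin conditions, for which ellipticity is classical. At $Y$ the boundary condition splits along $L^2(Y)=V\oplus V^\perp$. On the $\Piperp$-part, $\Bperp(\lambda)=\Piperp\partial_\xi - Q(\lambda)\Piperp$ has the same principal symbol (in the $Y$-cotangent variables) as the Dirichlet-to-Neumann-type operator $\partial_\xi - \sqrt{\Delta_Y+\nu-\lambda}$, precisely the boundary operator selecting exponentially decaying modes on the half-cylinder, which is well known to be Lopatinskii. On the $\Pi$-part we impose the finite-rank Lagrangian projection $P_{\calL_\alpha,\calL'_\alpha}(u^0,u^1)=h$, whose ellipticity is exactly the transversality hypothesis $\calL_\alpha\cap\calL'_\alpha=\{0\}$. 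Standard elliptic theory for boundary value problems with pseudodifferential boundary operators (or, alternatively, direct analysis by separation of variables in a cylindrical collar of $Y$) then yields
\begin{equation*}
\|u\|_{H^2(X^0)} \leq C\bigl(\|f\|_{L^2(X^0)} + \|g\|_{H^{1/2}(Y)} + \|h\|_{V\times V} + \|u\|_{L^2(X^0)}\bigr).
\end{equation*}

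For the uniqueness step, suppose $f=g=h=0$. I would extend $u$ to a function $\tilde u$ on $\Xinfty$ by placing on each cylindrical end the unique solution of $(\DeltaXinfty+\lambda)\tilde u=0$ whose Cauchy data at $\{x=0\}$ match those of $u$; smoothness across $Y$ is then automatic. Mode by mode, $\Bperp u=0$ kills the exponentially increasing components of $\Piperp\tilde u$, so $\Piperp\tilde u$ is exponentially decaying and lies in $L^2$. Since $\lambda\geq\nu$, the leading part $\Pi\tilde u=\calC_\lambda u^0+\calS_\lambda u^1$ from \eqref{eqn Pi u from u0u1} is polynomially bounded. Hence $\tilde u$ is polynomially bounded and $(u^0,u^1)\in\calL_\alpha$ by the definition \eqref{eqn def scatt subspace}. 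Combined with $(u^0,u^1)\in\calL'_\alpha$ coming from $h=0$ and the transversality of $\calL_\alpha,\calL'_\alpha$, this forces $(u^0,u^1)=0$. Then $\Pi\tilde u\equiv 0$, so $\tilde u=\Piperp\tilde u\in L^2(\Xinfty)$ is an $L^2$-eigenfunction of $-\DeltaXinfty$ at eigenvalue $\lambda$; by assumption there is none, so $\tilde u=0$ and $u=0$.

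Removal of the $\|u\|_{L^2(X^0)}$ term now follows by the usual contradiction: a hypothetical sequence $u_n$ with $\|u_n\|_{H^2}=1$ but vanishing right-hand sides would, by the compact embedding $H^2\hookrightarrow L^2$ applied to differences together with the Fredholm estimate, converge in $H^2$ to a nonzero element of the kernel, contradicting the previous step. Uniformity in $\alpha$ follows because the principal symbols entering the ellipticity estimate and the data $P_{\calL_\alpha,\calL'_\alpha}$ depend continuously on $\alpha$; the uniqueness proof holds at every $\alpha$ satisfying the hypothesis, so a limit argument along a hypothetical blow-up sequence $\alpha_n\to\alpha_*$ would again produce a nonzero kernel element at $\alpha_*$, which is impossible. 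The principal obstacle will be the clean justification of Shapiro--Lopatinskii for the mixed non-local/finite-rank boundary condition at $Y$ with constants that track continuously as $\alpha$ varies; once that is in place, the scattering-theoretic uniqueness is the real conceptual input but is short.
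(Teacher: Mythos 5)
Your uniqueness step, which is the conceptual heart of the lemma, matches the paper's exactly: use $g=0$ to extend $u$ to a polynomially bounded solution on $X^\infty$ (so $(u^0,u^1)\in\calL_\alpha$), use $h=0$ and transversality $\calL_\alpha\cap\calL'_\alpha=\{0\}$ to kill the Cauchy data, and conclude $u=0$ from the absence of $L^2$-eigenvalues at $\lambda$. Where you diverge is in the functional-analytic scaffolding around this uniqueness fact and in the uniformity argument. You set up a Shapiro--Lopatinskii a priori estimate with an extra $\|u\|_{L^2}$ term, then remove it by a compactness/contradiction argument, and get uniformity in $\alpha$ by a blow-up along $\alpha_n\to\alpha_*$. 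The paper instead proves directly that the map $u\mapsto\bigl((\Delta+\lambda)u,\Bperp(\lambda)u,P_{\calL_\alpha,\calL'_\alpha}(u^0,u^1)\bigr)$ is a bijection between the relevant Banach spaces --- surjectivity comes from the Fredholm alternative applied to the selfadjoint operator $\Delta_{X^0}$ on the domain defined by the homogeneous boundary conditions (selfadjointness is exactly what the Lagrangian condition on $\calL'_\alpha$ gives, as noted just before the lemma) --- and then invokes the open mapping theorem; uniformity is obtained by a direct perturbation argument, rewriting the $\alpha$-system as the $\alpha_0$-system plus small error and absorbing. Your route is valid but more work, and, as you anticipate, the cleanest justification that your mixed $\Piperp$-pseudodifferential/$\Pi$-finite-rank boundary operator satisfies Lopatinskii is a bit delicate: strictly speaking the Lopatinskii condition at nonzero $Y$-cotangent vectors only sees the $\Piperp\partial_\xi-Q(\lambda)$ piece, while the finite-rank $\Pi$ condition affects Fredholmness (index) rather than ellipticity per se; the paper sidesteps this by leaning on selfadjointness. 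So your phrasing ``whose ellipticity is exactly the transversality hypothesis'' is slightly misattributed (transversality enters in the uniqueness/index, not in the symbol-level Lopatinskii check), but the overall structure and conclusion are correct. Both your blow-up argument for uniformity and the paper's absorption argument work; the paper's is a bit more explicit and quantitative.
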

See Section \ref{subsec L2 eigenvalues} for a replacement in case $\nu+\alpha^2$ is an $L^2$-eigenvalue.
\begin{proof}
If $u$ is a solution of the homogeneous problem, i.e. $f=0$, $g=0$, $h=0$, then $g=0$ implies that $u$ extends to a solution of $(\Delta+\lambda)u=0$ on $X^\infty$ with bounded $\Piperp$ part and hence that $u$ is a scattering solution. Then $(u^0,u^1)\in\calL_\alpha$ by definition of $\calL_\alpha$. $h=0$ implies $(u^0,u^1)\in\calL'_\alpha$, hence $(u^0,u^1)=0$ since $\calL_\alpha\cap\calL'_\alpha=\{0\}$ and therefore $u\equiv 0$ by uniqueness of scattering solutions.

Therefore, the map $u\mapsto ((\Delta+\lambda)u,\Bperp(\lambda)u, P_{\calL_\alpha,\calL'_\alpha}(u^0,u^1)),\  \Dom(\Delta_{X^0}) \to L^2(X^0)\times H^{1/2}(Y)\times (V\times V)$ is injective. It is also surjective, since for $g=0$, $h=0$ one has a solution $u$ for any $f$ by the Fredholm alternative (since the operator $\Delta_{X^0}+\lambda$ is selfadjoint on the domain which consists of those $u$ satisfying homogeneous boundary conditions, has closed range and is injective), and arbitrary $g,h$ can be removed by replacing $u$ by $u-v$, where $v$ is any function in $\Dom(\Delta_{X^0})$ satisfying $\Bperp(\lambda)v=g$, $P_{\calL_\alpha,\calL'_\alpha}(v^0,v^1)=h$. $v$ exists by standard arguments, since $X^0$ is of product type near $Y$ by assumption. Since $\Dom(\Delta_{X^0})$ is complete with the $H^2$-norm the open mapping theorem gives \eqref{eqn basic ell estimate}.

To show that $C$ can be chosen independent of $\alpha$ it suffices to show that it can be chosen locally uniformly with respect to $\alpha$. This can be proved as follows: Fix $\alpha_0$ and let $C_0$ be the constant for $\alpha=\alpha_0$. Suppose $u$ satisfies \eqref{eqn ell system} for some $\alpha$ near $\alpha_0$. This can be rewritten $(\Delta+\lambda_0)u=f+(\lambda_0-\lambda)u,\ \Bperp(\lambda_0)u = g + (\Bperp(\lambda_0)-\Bperp(\lambda))u, P_{\calL_{\alpha_0},\calL'_{\alpha_0}} (u^0,u^1) = h + (P_{\calL_{\alpha_0},\calL'_{\alpha_0}}-P_{\calL_\alpha,\calL'_\alpha})(u^0,u^1)$.
Estimate \eqref{eqn basic ell estimate} with these data  yields $
\|u\|_{H^2(X^0)} \leq C_0 (\; \|f\|_{L^2(X^0)} + \|g\|_{H^{1/2}(Y)} + \epsilon\|u\|_{H^2(X^0)})$, where $\epsilon\to 0$ as $\alpha\to\alpha_0$ (independently of $u$), since all operators on the right in \eqref{eqn ell system} depend continuously on $\alpha$. For $\epsilon<1/2C_0$ the last term can be absorbed into the left hand side, and the claim follows.
\end{proof}

Using \eqref{eqn basic ell estimate} with $u=E$ a scattering solution we get
\begin{equation}
\label{eqn E bounded by E0E1}
\|E\|_{H^2(X^0)}\leq C\|(E^0,E^1)\|_{V\times V}\quad\text{ for any scattering solution }E.
\end{equation}

When applying Lemma \ref{lemma basic ell estimate} we will need the following estimate which shows that the exponentially increasing part, $\Bperp(\lambda) u$, of an eigenfunction on $\XNG$ is very small.

\begin{lemma}
\label{lemma eigenfcn est}
Let $(\DeltaXNG + \mu)u=0$, $\mu<\nu_1$. Then
\begin{equation}
\label{eqn Bperp estimate}
\|\Bperp(\mu) u\|_{H^{1/2}(Y)} \leq Ce^{-cN}\|u\|_{H^2(X^0)}.
\end{equation}
\end{lemma}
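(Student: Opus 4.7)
The entire content of the estimate is already packed into Lemma \ref{lemmasepvar}(e); the proof is essentially unwinding definitions and identifying the $H^{1/2}(Y)$ norm with the weighted $L^2$ norm appearing there.

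First I would restrict $u$ to the cylindrical region $Z^N_G\subset X^N_G$, on which it is an eigenfunction of $-\Delta_{Z^N_G}$ with eigenvalue $\mu<\nu_1$. By Lemma \ref{lemmasepvar}(b) the $\Pi^\perp$ part admits the representation
\begin{equation*}
\Pi^\perp u = \sum_{k=1}^\infty \bigl(e^{-\sqrt{\nu_k-\mu}\,\xi}\phi_k + e^{\sqrt{\nu_k-\mu}\,\xi}\psi_k\bigr),\qquad \phi_k,\psi_k\in V_k,
\end{equation*}
where the sum is orthogonal in $L^2(Y)$ for each fixed $\xi$. Next I would evaluate the three ingredients of $\Bperp(\mu)u$ at $\xi=0$: one gets $\Pi^\perp u_{|Y}=\sum_k(\phi_k+\psi_k)$, $\Pi^\perp\partial_\xi u_{|Y}=\sum_k\sqrt{\nu_k-\mu}(-\phi_k+\psi_k)$, and $Q(\mu)(\Pi^\perp u_{|Y})=-\sum_k\sqrt{\nu_k-\mu}(\phi_k+\psi_k)$. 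The $\phi_k$ contributions cancel and
\begin{equation*}
\Bperp(\mu)u \;=\; 2\sum_{k=1}^\infty \sqrt{\nu_k-\mu}\,\psi_k.
\end{equation*}

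The next step is to identify the $H^{1/2}(Y)$ norm of this expression. Using the spectral characterization $\|f\|_{H^{1/2}(Y)}^2\asymp\sum_k(1+\nu_k)^{1/2}\|f_k\|_Y^2$ for the eigendecomposition $f=\sum_k f_k$ with $f_k\in V_k$, and the fact that $k\geq 1$ forces $\nu_k\geq\nu_1>0$ (so $(1+\nu_k)^{1/2}$ and $(\nu_k)^{1/2}$ are comparable up to a constant depending only on $\nu_1$), I obtain
\begin{equation*}
\|\Bperp(\mu)u\|_{H^{1/2}(Y)}^2 \;\leq\; C\sum_{k=1}^\infty (\nu_k)^{1/2}(\nu_k-\mu)\,\|\psi_k\|_Y^2 \;=\; C\Bigl\|\sum_{k=1}^\infty (\nu_k)^{1/4}\sqrt{\nu_k-\mu}\,\psi_k\Bigr\|_Y^2,
\end{equation*}
the last equality by orthogonality of the $V_k$.

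Now I apply Lemma \ref{lemmasepvar}(e), picking any $c<\sqrt{\nu_1-\lambdamax}\,\lmin$, to bound the right-hand side by $Ce^{-2cN}\|\Pi^\perp u_{x=0}\|_Y$. Finally, the standard trace theorem on the compact manifold $X^0$ gives $\|\Pi^\perp u_{x=0}\|_Y\leq\|u_{x=0}\|_Y\leq C\|u\|_{H^2(X^0)}$, and combining the inequalities yields \eqref{eqn Bperp estimate}.

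There is no real obstacle here, since the delicate exponential estimate has already been done in Lemma \ref{lemmasepvar}(e); the only care needed is in matching the $(\nu_k)^{1/4}$ weight built into that lemma with the $H^{1/2}$ weight $(1+\nu_k)^{1/4}$, which is harmless because the sum runs over $k\geq 1$.
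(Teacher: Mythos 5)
Your proof is correct and follows essentially the same route as the paper: compute $\Bperp(\mu)u$ from the separated-variables representation of Lemma \ref{lemmasepvar}(b), compare the $H^{1/2}(Y)$ norm of the result with the weighted $L^2$ quantity $\|\sum_k(\nu_k)^{1/4}\sqrt{\nu_k-\mu}\,\psi_k\|_Y$ using the spectral characterization of Sobolev norms via $\Delta_Y$, invoke \eqref{eqn Bperp small}, and finish with the trace theorem. You are a bit more explicit than the paper in spelling out the cancellation of the $\phi_k$ terms (obtaining the factor $2$, which the paper drops) and the comparison of $(1+\nu_k)^{1/2}$ with $\nu_k^{1/2}$ for $k\geq 1$, but these are exactly the small constants the paper's $C$ is meant to absorb.
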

\begin{proof}
If $\Piperp u= \sum_{k=1}^\infty e^{-\sqrt{\nu_k-\mu}\xi}\phi_k + e^{\sqrt{\nu_k-\mu}\xi}\psi_k$ with $\phi_k,\psi_k\in V_k$ then $\Bperp(\mu)u = \sum_{k=1}^\infty \sqrt{\nu_k-\mu} \psi_k$.
Since $\Delta_Y$ is an elliptic operator of order $2$, with eigenvalues $\nu_k$, it follows that
$\|\Bperp(\mu)u\|_{H^{1/2}(Y)} \leq \|\sum_{k=1}^\infty ({\nu_k})^{1/4} \sqrt{\nu_k-\mu} \psi_k\|_{L^2(Y)}$, and then \eqref{eqn Bperp small} together with the trace theorem gives the claim.
\end{proof}

We will need that scattering solutions for different spectral values whose leading parts satisfy the matching conditions are almost orthogonal on $X^N$:
\begin{lemma}
\label{lemma E almost orthogonal}
Let $\alpha,\alpha'\in A_N$ and $\phi\in \calP_{\alpha,N},\phi'\in \calP_{\alpha',N}$. If $\alpha\neq\alpha'$ then the restrictions of $E=E_{\alpha,\phi}, E'=E_{\alpha',\phi'}$ to $X^N$ are almost orthogonal, i.e.
$$ |\la E,E'\ra_{X^N}| \leq Ce^{-cN}\|E\|_{X^N}\|E'\|_{X^N}.$$
\end{lemma}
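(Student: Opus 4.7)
The approach is Green's identity on $X^N$. Since $E,E'$ solve the eigenvalue equations $(-\Delta-\lambda)E=0$, $(-\Delta-\lambda')E'=0$ on $X^N$ and both satisfy the same homogeneous D/N (or Robin) conditions on $\partial X^N\setminus\{x=N\}$, integration by parts collapses the bulk integral into the cross-sectional boundary at $x=N$:
\[
(\lambda-\lambda')\langle E,E'\rangle_{X^N}=\int_{\{x=N\}}\bigl(E\,\partial_\xi\overline{E'}-\overline{E'}\,\partial_\xi E\bigr).
\]
Using that $\Pi$ is a self-adjoint projection on $L^2(Y)$, I decompose each factor into its $\Pi$ and $\Piperp$ components; the $\Pi/\Piperp$ cross terms integrate to zero and the boundary integral splits as $I_\Pi+I_{\Piperp}$.

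The key observation is that $I_\Pi$ \emph{vanishes identically}. Both $\Pi E$ and $\Pi E'$ satisfy the matching conditions \eqref{eqn matching u} at $x=N$, so $\Pi E|_{x=N},\Pi E'|_{x=N}\in V^+$ while $(\partial_\xi\Pi E)|_{x=N},(\partial_\xi\Pi E')|_{x=N}\in V^-$. Because $\sigma$ is a self-adjoint involution the decomposition $V=V^+\oplus V^-$ is orthogonal, and each summand of $I_\Pi$ is then a pairing $\langle V^+,V^-\rangle_V$ or $\langle V^-,V^+\rangle_V$, which is zero. On the other hand, $I_{\Piperp}$ is exponentially small: scattering solutions are polynomially bounded on $X^\infty$, so by Lemma \ref{lemmasepvar}(c) the exponentially growing modes in their $\Piperp$-expansions are absent, and Lemma \ref{lemmasepvar}(d) yields
\[
\|\Piperp E|_{x=N}\|_Y+\|(\partial_\xi\Piperp E)|_{x=N}\|_Y\le Ce^{-cN}\|\Piperp E\|_{Z^1}
\]
with $c=\sqrt{\nu_1-\lambdamax}\,\lmin>0$, and likewise for $E'$; the $Z^1$-norms are controlled by $\|\phi\|_V,\|\phi'\|_V$ via the scattering bound \eqref{eqn E bounded by E0E1}. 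Cauchy--Schwarz then gives $|I_{\Piperp}|\le Ce^{-cN}\|\phi\|_V\|\phi'\|_V$.

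Combining the two pieces, $|\langle E,E'\rangle_{X^N}|\le \tfrac{Ce^{-cN}}{|\lambda-\lambda'|}\|\phi\|_V\|\phi'\|_V$. To put this in the stated form I will need two polynomial-in-$N$ inputs: a lower bound $\|E\|_{X^N}\ge c\sqrt{N}\|\phi\|_V$, obtained by integrating $\|\Pi E\|_Y^2=2\|\phi\|^2+2\operatorname{Re}\bigl(e^{2i\alpha\xi}\langle\phi,S(\alpha)\phi\rangle\bigr)$ over $Z^N$ and using unitarity of $S(\alpha)$ (the oscillatory remainder is $O(1/\alpha)$ and thus harmless on the scale $\alpha\gtrsim 1/N$ guaranteed for $\alpha\in A_N$ by Theorem \ref{thmscattef1}); and a polynomial lower bound $|\lambda-\lambda'|\ge cN^{-p}$ coming from Proposition \ref{propperturbedef}---either $\alpha,\alpha'$ correspond to distinct indices $i$, giving $|\alpha-\alpha'|\gtrsim 1/N$ by discreteness of $\{z_i\}$, or they share $i$ with distinct branches $\rho,\rho'$, in which case $Z_i^\rho-Z_i^{\rho'}$ is a non-vanishing analytic function of $t=1/N$ at $t=0$ and contributes a polynomial bound. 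Absorbing these polynomial factors into a slightly smaller exponential rate completes the proof. The conceptual heart of the argument is the exact cancellation of $I_\Pi$ produced by the matching conditions together with $V^+\perp V^-$; this is the mechanism that converts an $O(N)$-sized norm product on the right into exponential smallness on the left. The main technical obstacle is the final bookkeeping: securing the polynomial lower bound on $|\lambda-\lambda'|$ uniformly in $N$ from the branching behavior of the $Z_i^\rho$.
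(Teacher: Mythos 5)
Your setup is the same as the paper's: Green's formula reduces $(\lambda-\lambda')\langle E,E'\rangle_{X^N}$ to the boundary term at $x=N$, and the matching conditions together with $V^+\perp V^-$ kill the $\Pi$ part of that boundary term. Up to this point you are in full agreement with the paper. The gap is in what you do with the $\Piperp$ term.

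You estimate $I_{\Piperp}$ as $O(e^{-cN})$ and then \emph{divide by} $|\lambda-\lambda'|$, which forces you to establish (i) a polynomial lower bound $|\lambda-\lambda'|\gtrsim N^{-p}$ and (ii) a lower bound $\|E\|_{X^N}\gtrsim \sqrt N\|\phi\|_V$, and then to absorb the resulting polynomial factors into the exponential. Neither input is secured. For (i): when $i\ne i'$ you invoke ``discreteness of $\{z_i\}$,'' but discreteness gives no uniform gap $|z_i-z_{i'}|\ge c$, and the Weyl count of Proposition~\ref{prop alpha=0} controls only averages, not gaps; when $i=i'$, $\rho\ne\rho'$, the function $Z_i^\rho-Z_i^{\rho'}$ \emph{vanishes} at $t=0$ (both branches equal $z_i$ there), so it is not ``non-vanishing at $t=0$,'' and the order of vanishing — hence the degree $p$ of your polynomial bound — depends on $(i,\rho,\rho')$ and is not uniform over the $O(N)$ relevant indices. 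So the ``absorb into a smaller exponential rate'' step is unjustified. For (ii): your integral computation gives $\|\Pi E\|_{Z^N}^2 = 2N l\|\phi\|^2 + O(\alpha^{-1})\|\phi\|^2$; since the smallest admissible $\alpha\approx z_1/N$ can make $\alpha^{-1}$ comparable to $N$ (with a constant depending on $z_1$), the oscillatory remainder is \emph{not} negligible a priori and you have not shown it doesn't cancel the main term.

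The paper sidesteps both difficulties with a short algebraic observation you miss: writing $\Piperp E=\sum_k e^{-\sqrt{\nu_k-\lambda}\,\xi}\phi_k$ (no increasing modes by Lemma~\ref{lemmasepvar}(c)) and similarly for $E'$, the boundary pairing becomes
\begin{equation*}
\sum_{k\ge1}\bigl(\sqrt{\nu_k-\lambda}-\sqrt{\nu_k-\lambda'}\bigr)\,
\bigl\langle e^{-\sqrt{\nu_k-\lambda}LN}\phi_k,\,e^{-\sqrt{\nu_k-\lambda'}LN}\phi_k'\bigr\rangle,
\end{equation*}
and the identity $\sqrt{\nu_k-\lambda}-\sqrt{\nu_k-\lambda'}=(\lambda'-\lambda)/(\sqrt{\nu_k-\lambda}+\sqrt{\nu_k-\lambda'})$ produces an explicit factor $(\lambda'-\lambda)$ that cancels the one on the left. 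No lower bound on $|\lambda-\lambda'|$ is needed at all, and what remains is bounded by $Ce^{-cN}\|\Piperp E_{x=0}\|_Y\|\Piperp E'_{x=0}\|_Y$, which is controlled by $\|E\|_{X^0}\|E'\|_{X^0}\le\|E\|_{X^N}\|E'\|_{X^N}$ directly — so the $\sqrt N$ lower bound on $\|E\|_{X^N}$ is also unnecessary. You should replace the divide-and-absorb step with this cancellation; as written, your proof has a genuine gap at exactly the point that makes the lemma non-trivial.
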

\begin{proof}
With $\lambda=\nu+\alpha^2,\ \lambda'=\nu+(\alpha')^2$ we have by Green's formula
\begin{align*}
(\lambda-\lambda')\la E,E'\ra_{X^N} & = \la\lambda E,E'\ra_{X^N} - \la E,\lambda'E'\ra_{X^N} = -\la\Delta E,E'\ra_{X^N} + \la E,\Delta E'\ra_{X^N}\\
                         & = \la E_{x=N},\partial_\xi E'_{x=N}\ra_Y - \la\partial_\xi E_{x=N},E'_{x=N}\ra_Y
\end{align*}
Since $\Pi E$, $\Pi E'$ satisfy the matching conditions at $x=N$, the $\Pi$ part of the latter scalar products vanishes, and only the $\Piperp$ part remains. Writing $\Piperp E=\sum_{k=1}^\infty e^{-\sqrt{\nu_k-\lambda}Lx}\phi_k$, $\phi_k\in V_k$, and similarly for $\Piperp E'$, we get
\begin{align*}
(\lambda-\lambda')\la E,E'\ra_{X^N} & = \sum_{k=1}^\infty (\sqrt{\nu_k-\lambda}-\sqrt{\nu_k-\lambda'}) \la e^{-\sqrt{\nu_k-\lambda}LN}\phi_k,e^{-\sqrt{\nu_k-\lambda'}LN}\phi_k'\ra \\
& = (\lambda'-\lambda) \sum_{k=1}^\infty (\sqrt{\nu_k-\lambda}+\sqrt{\nu_k-\lambda'})^{-1} \la e^{-\sqrt{\nu_k-\lambda}LN}\phi_k,e^{-\sqrt{\nu_k-\lambda'}LN}\phi_k'\ra,
\end{align*}
and since the latter sum is bounded by $$Ce^{-cN} \sqrt{\sum_k \|\phi_k\|^2}\sqrt{\sum_k \|\phi_k'\|^2} = Ce^{-cN} \|\Piperp E_{x=0}\|_Y\|\Piperp E'_{x=0}\|_Y$$ and since $\|E_{x=0}\|\leq C \|E\|_{X^0}$ by elliptic regularity, the claim follows.
\end{proof}

\section{Proof of the Main Theorem}
\label{sec main proofs}
In this section we prove the main theorems. We treat separately the following cases: Eigenvalues on $\XNG$ arising from $L^2-$eigenvalues on $\Xinfty$ below the essential spectrum; eigenvalues on $\XNG$ arising from the continuous spectrum on $\Xinfty$ but away from the threshold, i.e.\ bigger than $\nu+e^{-cN}$ for suitable $c>0$; eigenvalues on $\XNG$ arising from the threshold.

In each case, the proof proceeds in two steps: In Step 1, we construct the eigenvalues on $\XNG$ from approximate eigenfunctions constructed from the (generalized) eigenfunctions on $\Xinfty$. In Step 2 we show that in this way all eigenfunctions are obtained.

We assume at first there $-\DeltaXinfty$ has no $L^2$-eigenvalues in $[\nu,\lambdamax]$. The modifications needed in case there are such eigenvalues are described in Section \ref{subsec L2 eigenvalues}.

We use the following notation: For a selfadjoint operator $A$ and $I\subset\R$ let
$\Eig_I(A)$ be the spectral subspace for the spectral interval $I$. If $A$ has only discrete spectrum in $I$, this is the span of the eigenfunctions of $A$ with eigenvalues in $I$. Also, let
$$ \Eig_{I,N} := \Eig_I(-\DeltaXNG).$$
We will  construct approximate spectral subspaces $\Eigapp_{I,N}$, defined in each case separately, using a cutoff function defined as follows.
Choose $\chi\in C^\infty(\R)$ satisfying $\chi(x)=1$ for $x\leq -\frac12$ and $\chi(x)=0$ for $x\geq -\frac13$ and set
\begin{equation} \label{chiNdef}
\chi_{N}(x)=\chi(x-N).
\end{equation}
Thus, if $u$ is a function on $X^N$ then $\chi_N u$ equals $u$ for $x\leq N-1$ and is identically zero for $x\geq N-\frac13$.

Note that one could not hope to approximate individual eigenfunctions on $\XNG$ by approximate eigenfunctions if the eigenvalues lie very close together.

\subsection{Eigenvalues arising from $L^2$ eigenvalues on $\Xinfty$}
\label{subsec ef L2}

For $I\subset (0,\nu)$ let
\begin{equation}
\label{eqn def Eigapp lambda<nu}
\Eigapp_{I,N} := \Span\{\chi_N u:\, u\in \Eig_I(-\DeltaXinfty)\}.
\end{equation}
\begin{theorem}
\label{thm ev < nu}
For any $c_0>0$ there is $N_0>0$ such that for $N\geq N_0$ the eigenvalues $\mu\leq\nu-c_0$ of $-\DeltaXNG$ lie within $Ce^{-cN}$ of the $L^2$-eigenvalues of $-\DeltaXinfty$.

For each $L^2$-eigenvalue $\lambda<\nu$ of $-\DeltaXinfty$ we have
\begin{equation}
\label{eqn Eig close lambda<nu}
\distsymm(\Eig_{I,N},\Eigapp_{\{\lambda\},N}) \leq Ce^{-cN},\quad I=(\lambda-Ce^{cN},\lambda+Ce^{-cN}).
\end{equation}
Here $c=\sqrt{\nu-\lambdamax}$ where $\lambdamax$ is the largest eigenvalue of $-\DeltaXinfty$ less than $\nu$.
\end{theorem}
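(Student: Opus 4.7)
The plan is to execute the two-step strategy from Case~I of the introduction, using the cutoff $\chi_N$ to move smoothly between $X^\infty$ and $\XNG$. For the construction step, take any $L^2$-eigenfunction $U$ of $-\DeltaXinfty$ with eigenvalue $\lambda<\nu$. Since $U$ is polynomially bounded (in fact $L^2$), part (c) of Lemma~\ref{lemmasepvar} forces the growing modes in both the leading and transverse parts to vanish, and then \eqref{expdecaymu<nu} applied on each half-edge cylinder gives $\|U_{x=M}\|_Y + \|(\partial_\xi U)_{x=M}\|_Y \leq Ce^{-cN}\|U\|$ uniformly for $M\in[N-1,N]$, with $c = \sqrt{\nu-\lambda}\,\lmin$. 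Thus $\chi_N U$ is identically zero near $x=N$, extends by zero to a smooth function on $\XNG$ satisfying the boundary conditions, and obeys $(-\DeltaXNG-\lambda)(\chi_N U) = -[\Delta,\chi_N]U$. The commutator is supported in $\{N-\tfrac12\leq x\leq N-\tfrac13\}$ where $U$ and $\nabla U$ are $O(e^{-cN})$, so the error has $L^2$-norm at most $Ce^{-cN}\|U\|$ while $\|\chi_N U\|$ is within $Ce^{-cN}$ of $\|U\|$.

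Choose $I=(\lambda-Ce^{-cN},\lambda+Ce^{-cN})$ and let $\delta$ be half the distance from $\lambda$ to the nearest other point of $\specdisc(-\DeltaXinfty)$ in $[0,\nu)$. The Spectral Approximation Lemma~\ref{lemmaSAL} applied to $W=\Eigapp_{\{\lambda\},N}$ gives $\dist(\Eigapp_{\{\lambda\},N},\Eig_{I,N})\leq Ce^{-cN}$. In particular the map $U\mapsto \chi_N U$ is injective on $\Eig_{\{\lambda\}}(-\DeltaXinfty)$ and $\dim \Eigapp_{\{\lambda\},N}\leq \dim \Eig_{I,N}$. For the reverse direction, let $u$ be any eigenfunction of $-\DeltaXNG$ with eigenvalue $\mu\leq\nu-c_0$. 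Part (d) of Lemma~\ref{lemmasepvar} applied on $X^N_G$ bounds $u$ and $\partial_\xi u$ by $Ce^{-cN}\|u\|$ near $x=N$, so $\chi_N u$ extends by zero to a smooth function on $X^\infty$ with $(-\DeltaXinfty-\mu)(\chi_N u) = -[\Delta,\chi_N] u$ of size $Ce^{-cN}\|u\|$. Since by Theorem~\ref{thmscatt1} the spectrum of $-\DeltaXinfty$ in $[0,\nu-c_0/2]$ is purely discrete, another application of Lemma~\ref{lemmaSAL} forces $\mu$ to lie within $Ce^{-cN}$ of an $L^2$-eigenvalue of $-\DeltaXinfty$, proving the first claim of the theorem.

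To upgrade this to the symmetric estimate \eqref{eqn Eig close lambda<nu}, apply the reverse direction to a basis of $\Eig_{I,N}$ and exploit the spectral gap of $-\DeltaXinfty$ around $\lambda$: the image under $u\mapsto\chi_N u$ is exponentially close to $\Eig_{\{\lambda\}}(-\DeltaXinfty)$, and the inverse cutoff (projection back to $\XNG$ via $\chi_N$) is a near-isometry on this exponentially decaying eigenspace. This yields $\dist(\Eig_{I,N},\Eigapp_{\{\lambda\},N})\leq Ce^{-cN}$, and hence by Lemma~\ref{lemma dist}(c) the symmetric distance bound, along with the equality of dimensions $\dim\Eigapp_{\{\lambda\},N}=\dim\Eig_{I,N}$. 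The main obstacle is bookkeeping the spectral gap uniformly in $N$: one must ensure that the width of $I$ is chosen after $\lambda$ (and hence independent of $N$) while still separating $\lambda$ from all other points of $\specdisc(-\DeltaXinfty)$, and one must verify that the decay rate $c$ can be taken uniform, determined by $\sqrt{\nu-\lambdamax}$ where $\lambdamax$ is the largest $L^2$-eigenvalue below $\nu$, so that the $O(e^{-cN})$ errors really are $N$-independent in their constants.
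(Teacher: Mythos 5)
Your proposal is correct and follows essentially the same two-step cutoff argument as the paper's proof: construct approximate eigenfunctions on $\XNG$ by truncating $L^2$-eigenfunctions of $-\DeltaXinfty$ and applying Lemma~\ref{lemmaSAL}, then reverse the cutoff to show every eigenfunction of $-\DeltaXNG$ with eigenvalue $\leq\nu-c_0$ must be exponentially close to one so constructed, and finally symmetrize the distance estimate via Lemma~\ref{lemma dist}(c). The only cosmetic difference is that you invoke the fixed spectral gap of $-\DeltaXinfty$ around $\lambda$ already in the construction step to obtain $\dist(\Eigapp_{\{\lambda\},N},\Eig_{I,N})\leq Ce^{-cN}$ directly, whereas the paper contents itself with $<1$ there and recovers the full exponential bound only after Step~2 through Lemma~\ref{lemma dist}(c); both routes lead to the same conclusion.
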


\begin{proof}
\noindent{\bf Step 1:}
Show that the approximate eigenfunctions are actually such:
\begin{equation}
\label{eqn Eigapp close to Eig lambda<nu}
\dist(\Eigapp_{\{\lambda\},N},\Eig_{I,N}) <1.
\end{equation}
In particular, $-\DeltaXNG$ has at least $\dim \Eigapp_{\{\lambda\},N}$ many eigenvalues in $I$.

Proof: Let $W=\Eigapp_{\{\lambda\},N}$.
For $w=\chi_{N}u\in W $ one has
$$\Delta w = \chi_{N}(\Delta u) + 2\grad\chi_{N}\grad u + (\Delta \chi_{N})u
= -\lambda \chi_{N}u + 2L^{-2}\chi_{N}'\partial_{x}u + L^{-2}\chi_{N}'' u,$$
and since $\chi_{N}',\chi_{N}''$ are supported in $(N-1,N)$, one obtains from \eqref{expdecaymu<nu}, applied with $M\in (N-1,N)$, that
\begin{equation} \label{eqnwexpest}
\| (\Delta_{X^N_G}+\lambda)w \|_{X^N_G} \leq C e^{-cN} \|w\|_{X^N_G}.
\end{equation}
Since $W\subset \Dom (\Delta_{X^N_G})$ we may apply the Spectral Approximation Lemma \ref{lemmaSAL} to the operator $A=-\Delta_{X^N_G}$, with $\lambda_0=\lambda$ and $\epsilon=C e^{-cN}, \delta=2\epsilon$, and this gives \eqref{eqn Eigapp close to Eig lambda<nu}.
\medskip

\noindent{\bf Step 2:}
Show that any eigenvalue $\mu$ of $-\DeltaXNG$ is in some $I=(\lambda-Ce^{cN},\lambda+Ce^{-cN})$ and that
\begin{equation}
\label{eqn Eig close to Eigapp lambda<nu}
\dist(\Eig_{I,N},\Eigapp_{\{\lambda\},N}) \leq Ce^{-cN}.
\end{equation}
Proof:
Let $u$ be an eigenfunction of $-\Delta_{X^N_G}$, with eigenvalue $\mu\leq\nu-\gamma$.
Then $w=\chi_{N}u \in \Dom (\Delta_{X^\infty})$ satisfies (with $c_1=\sqrt{\gamma}$)
$$ \| (\Delta_{X^\infty} + \mu) w \|_{X^\infty} \leq C e^{-c_1N} \|w\|_{X^\infty}.$$
This follows from exponential decay of $w$ and is proved in the same way as \eqref{eqnwexpest}.
This implies $|\mu-\lambda|\leq C e^{-c_1 N}$ for some $\lambda\in\spec(-\DeltaXinfty)$ (in particular, $c_1$ may be replaced by $c$). Since $\mu\leq\nu-\gamma$, $\lambda$ must be an $L^2$ eigenvalue of $-\DeltaXinfty$. Now apply the Spectral Approximation Lemma \ref{lemmaSAL} to $A=-\DeltaXinfty$, $W=\Span\{w\}$, with $\lambda_0=\mu$ and $\epsilon=C e^{-cN}$, $\delta= \dist(\mu,\spec(-\DeltaXinfty)\setminus\{\lambda\})$. Since the interval $(\mu-\delta,\mu+\delta)$ intersects the spectrum of $-\DeltaXinfty$ only in $\lambda$, we get $\dist(\Span\{w\},\Eig_{\{\lambda\}}(-\DeltaXinfty))<C e^{-cN}$, and this implies (using exponential decay again)
\begin{equation}
\label{eqn u close to Eigapp lambda<nu}
\dist(\Span\{u\},\Eigapp_{\{\lambda\},N}) \leq Ce^{-cN}.
\end{equation}
Finally, applying this to an orthonormal basis of eigenfunctions of $-\DeltaXNG$ with eigenvalues in $I$, we get \eqref{eqn Eig close to Eigapp lambda<nu} from \eqref{eqn dist orthogonal}.

\noindent{\bf End of proof:} The first statement of the theorem is contained in Step 2, and \eqref{eqn Eigapp close to Eig lambda<nu} and \eqref{eqn Eig close to Eigapp lambda<nu} together imply \eqref{eqn Eig close lambda<nu} by Lemma \ref{lemma dist}c).
\end{proof}

\subsection{Eigenvalues arising from the interior of the continuous spectrum}
\label{subsec cont spec}

To define the approximate eigenspaces, recall Theorem \ref{thmscattef1}.
Let $$A_N = \{\alpha_{i,N}^\rho:\, i=1,2,\dots;\rho=1,\dots,r_i,\ \alpha_{i,N}^\rho\in(0,\alphamax]\},\quad \Lambda_N = \{\nu+ \alpha^2:\,\alpha\in A_N\}.$$
Recall that $\alpha\in A_N$ iff $\Pi E_{\alpha,\phi}$ satisfies the matching conditions for some $\phi\neq 0.$
The corresponding space of $\phi$ is $\calP_{\alpha,N}$ from \eqref{phisol}.
Therefore, the function
\begin{equation}
\label{eqn def Etilde}
\Etilde_{\alpha,\phi} = \begin{cases}
                            E_{\alpha,\phi} & \text{ on } X^0\\
                            \Pi E_{\alpha,\phi} + \chi_N \Piperp E_{\alpha,\phi} & \text{ on } Z^N
\end{cases}
\end{equation}
is smooth on $\XNG$ and in the domain of $\DeltaXNG$.

For $I\subset\R$ let
$$\Eigapp_{I,N} := \Span\{\Etilde_{\alpha,\phi},\ \nu+\alpha^2\in I,\ \alpha\in A_N,\ \phi\in \calP_{\alpha,N}\}.$$
Also, if $I=(A,B)$ and $\delta>0$ then let $$I_\delta := (A-\delta,B+\delta).$$

We call  $\dim\calP_{\alpha,N}$ the multiplicity of $\alpha$ resp. of $\lambda=\nu+\alpha^2$.

\begin{theorem}
\label{thm main general theorem lambda>nu}
Assume $-\DeltaXinfty$ has no $L^2$-eigenvalues in $[\nu,\lambdamax]$. The numbers $\lambda\in \Lambda_N$ are the approximate eigenvalues of $-\DeltaXNG$, with approximate eigenfunctions linear combinations of $\Etilde_{\alpha,\phi}$, $\phi\in\calP_{\alpha,N}$. The errors are of order $e^{-cN}$.

More precisely, given a sufficiently small $c_0>0$ there are constants $C,c>0$ such that:
\begin{enuma}
\item
Let $\mu_i$ be the eigenvalues bigger than $\nu+e^{-2c_0N}$ of $-\DeltaXNG$, arranged in increasing order and counted with multiplicity. Also, let $\lambda_i$ be the elements of $\Lambda_N$, arranged in increasing order and counted with multiplicity. Then, for all $i$,
\begin{equation}
\label{eqn evdistance}
|\lambda_i-\mu_i| \leq Ce^{-cN}.
\end{equation}
\item
Let $I\subset (\nu+Ce^{-cN},\lambdamax]$. If there is no $\lambda\in \Lambda_N$ in $I_{2Ce^{-cN}}\setminus I$ then
\begin{equation}
\label{eqn Eigclose lambda>nu}
\distsymm (\Eig_{I',N},\Eigapp_{I,N}) \leq  C e^{-cN},\quad I' = I_{Ce^{-cN}} .
\end{equation}
\end{enuma}
\end{theorem}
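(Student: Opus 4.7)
The plan follows the Case II outline from the introduction: first construct approximate eigenfunctions from scattering data and use the Spectral Approximation Lemma to produce true eigenvalues of $-\DeltaXNG$, then show that every eigenfunction in the relevant spectral window arises this way. Combining the two directions via Lemma~\ref{lemma dist}(c), together with a dimension count enabled by the gap hypothesis, will give \eqref{eqn Eigclose lambda>nu}, and matching up ordered lists will give \eqref{eqn evdistance}.

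\emph{Step 1.} Fix $\alpha\in A_N$ and $\phi\in\calP_{\alpha,N}$. By Theorem~\ref{thmscattef1}(b) the leading part $\Pi E_{\alpha,\phi}$ already satisfies the matching conditions at $x=N$, so in $\Etilde_{\alpha,\phi}$ the cutoff $\chi_N$ only affects $\Piperp E_{\alpha,\phi}$, which decays exponentially by \eqref{expdecaymu>nu}. A routine computation of the commutator $[\Delta,\chi_N]$ then gives $\|(\DeltaXNG+\nu+\alpha^2)\Etilde_{\alpha,\phi}\|\le Ce^{-cN}\|\Etilde_{\alpha,\phi}\|$ with $c$ depending only on $\nu_1-\lambdamax$. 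Lemma~\ref{lemma E almost orthogonal}, together with an orthonormal choice within each $\calP_{\alpha,N}$, keeps the Gram matrix of the $\Etilde_{\alpha,\phi}$'s nondegenerate up to an exponentially small error, so Lemma~\ref{lemmaSAL} applied to $W=\Eigapp_{I,N}$ yields $\dist(\Eigapp_{I,N},\Eig_{I',N})\le Ce^{-cN}$, and in particular $\dim\Eig_{I',N}\ge\dim\Eigapp_{I,N}$.

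\emph{Step 2.} Let $u$ be a normalized eigenfunction of $-\DeltaXNG$ with eigenvalue $\mu=\nu+\beta^2$, $\beta>e^{-c_0N}$. Fix a Lagrangian complement $\calL_\beta'$ of $\calL_\beta$, set $h=P_{\calL_\beta,\calL_\beta'}(u^0,u^1)$, and let $E=E_{\beta,\phi_0}$ be the unique scattering solution with $(E^0,E^1)=h$. Then $u-E$ solves \eqref{eqn ell system} with $f=0$, $g=\Bperp(\mu)u$, and third component $0$, so Lemma~\ref{lemma basic ell estimate} combined with Lemma~\ref{lemma eigenfcn est} gives $\|u-E\|_{H^2(X^0)}\le Ce^{-cN}\|u\|_{H^2(X^0)}$. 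Representing $\Pi u-\Pi E$ by its values at $\xi=0$ via \eqref{eqn Pi u from u0u1} and using the boundedness of $\calC_\beta,\calS_\beta$ on $[0,NL]$ (up to a factor $\beta^{-1}$ coming from the $\calS_\beta$ term), this closeness propagates to $x=N$; since $\BC_\beta(u)=0$ one obtains $\|\BC_\beta(E_{\beta,\phi_0})\|_V\le Ce^{-cN}\beta^{-1}\|\phi_0\|_V$. Provided $c_0$ is chosen small enough that $c-c_0>0$, Theorem~\ref{thm stability} then produces $\alpha_{i,N}^\rho\in A_N$ within distance $Ce^{-c'N}$ of $\beta$ and a vector in the corresponding $\calP_{\alpha,N}$ within $Ce^{-c'N}$ of $\phi_0$, so $u$ is within $Ce^{-c'N}$ of $\Eigapp_{\{\nu+\alpha^2\},N}$.

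\emph{Main obstacle and conclusion.} The delicate balance is between the exponential gain $e^{-cN}$ from Lemma~\ref{lemma eigenfcn est} and the loss $\beta^{-1}\le e^{c_0N}$ incurred when propagating the compact-side estimate across the length-$N$ cylinder to the matching interface at $x=N$; this tension is exactly what forces the hypothesis $\mu>\nu+e^{-2c_0N}$ in (a), while eigenvalues nearer the threshold require the separate Case III treatment. Once Step 2 is in hand, the gap hypothesis that no $\lambda\in\Lambda_N$ lies in $I_{2Ce^{-cN}}\setminus I$ ensures the $\nu+\alpha^2$ produced by Step 2 actually lies in $I$ whenever $\mu\in I'$, so the assignments of Steps 1 and 2 are mutually inverse modulo exponentially small errors. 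Lemma~\ref{lemma dist}(b),(c) then forces equality of dimensions and yields \eqref{eqn Eigclose lambda>nu}; finally, lining up the ordered eigenvalues $\mu_i$ with the corresponding $\lambda_i$ through this near-bijection gives \eqref{eqn evdistance}.
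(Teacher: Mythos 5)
Your outline reproduces the paper's two-step strategy (construct approximate eigenfunctions and apply the Spectral Approximation Lemma, then show every eigenfunction is captured by combining the elliptic estimate with the stability analysis of the matching condition, and finally invoke Lemma~\ref{lemma dist}) and correctly identifies the key tension ($e^{-cN}$ gain from Lemma~\ref{lemma eigenfcn est} versus $\beta^{-1}\le e^{c_0N}$ loss in $\BC_\beta$). However, Step~1 as written contains a genuine gap.

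You apply Lemma~\ref{lemmaSAL} directly to $W=\Eigapp_{I,N}$ and conclude $\dist(\Eigapp_{I,N},\Eig_{I',N})\le Ce^{-cN}$. This cannot work: the Spectral Approximation Lemma requires $\|(A-\lambda_0)w\|\le\epsilon\|w\|$ for a \emph{single} $\lambda_0$ and all $w\in W$, but $\Eigapp_{I,N}$ mixes approximate eigenfunctions for different $\lambda\in\Lambda_N$ which may be spread across all of $I\subset(\nu,\lambdamax]$. For a combination $w=\sum_\lambda w_\lambda$ one only gets $\|(A-\lambda_0)w\|\lesssim(\sup_{\lambda\in I}|\lambda-\lambda_0|)\|w\|$, which is $O(1)$, not $O(e^{-cN})$. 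The paper avoids this by applying the SAL one $\lambda$ at a time, getting $\dist(\Eigapp_{\{\lambda\},N},\Eig_{J,N})\le Ce^{-cN}/\delta$ for each $\lambda$, then using Lemma~\ref{lemma E almost orthogonal} together with an almost-orthogonal version of \eqref{eqn dist orthogonal} to sum, paying the cardinality $|\Lambda_N|=O(N)$. One then chooses $\delta\sim Ne^{-cN}$ so that the net bound is $<1$. Notice that Step~1 is only meant to yield $\dist(\Eigapp_{I,N},\Eig_{I',N})<1$ — i.e.\ the dimension inequality; the exponential bound on this distance emerges only after combining with Step~2 via Lemma~\ref{lemma dist}(c).

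Two further points are glossed over. In Step~2 you need to pass from exponential closeness of the boundary data $(u^0,u^1)$ to $\bigoplus\calF_{\alpha,N}$ (not a single $\calF_{\alpha,N}$ — branch crossings may force a direct sum over $\alpha$'s within a small window, which is what Theorem~\ref{thm stability}(b) delivers) to exponential closeness of $u$ to $\Eigapp$ in $L^2(\XNG)$. This requires the estimate $\|v\|_{X^N}\le CN\|(v^0,v^1)\|_{V\times V}$ for a difference of a scattering solution and an eigenfunction on $\XNG$, obtained from \eqref{expdecaymu>nu} and a variant of \eqref{eqn E bounded by E0E1}; you do not mention it, and without it the exponential closeness on $X^0$ does not automatically control the contribution of the length-$N$ cylinders.
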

See Section \ref{subsec L2 eigenvalues} for the modifications needed in case there are $L^2$-eigenvalues in $[\nu,\lambdamax]$.

The statement in b) is complicated due to the possible crossings of the branches $z_i^\rho$ for different $i$ in Figure \ref{figure2}. These do not occur on the line $z=\alpha N$ for bounded $i$ (corresponding to fixed $k$ as in Theorem \ref{maintheorem_Gepsilon}), and one obtains:
\begin{corollary}
\label{cor bounded i}
The eigenvalues of $-\DeltaXNG$ form clusters of width $Ce^{-cN}$ around the $\lambda\in \Lambda_N$. For any $C_0>0$ there are $c>0,N_0$ such that for $N\geq N_0$ the clusters around the $\lambda\leq \nu+C_0 N^{-2}$ are disjoint and the span of eigenfunctions of $-\DeltaXNG$ corresponding to the $\lambda$-cluster has distance less than $e^{-cN}$ from
$$\Span\{\Etilde_{\alpha,\phi}: \phi\in \calP(\alpha)\}$$
where $\lambda=\nu+\alpha^2$, $\calP(\alpha)= \ker (I-e^{i\alpha N2L}\sigma S(\alpha))$.
\end{corollary}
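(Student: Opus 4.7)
The plan is to deduce the corollary directly from Theorem~\ref{thm main general theorem lambda>nu}; the only nontrivial work is to verify that, in the regime $\lambda\leq\nu+C_0 N^{-2}$, any two distinct elements of $\Lambda_N$ are separated by a fixed inverse power of $N$, which dominates $e^{-cN}$ for large $N$. This makes the hypothesis of part~(b) of that theorem applicable with $I$ a window of width $Ce^{-cN}$ around each such $\lambda$. The clustering statement itself is an immediate reformulation of part~(a). The identification $\calP_{\alpha,N} = \ker(I - e^{i\alpha N 2L}\sigma S(\alpha)) = \calP(\alpha)$ follows from \eqref{eqn proj for Z} together with $U(z,\alpha)=e^{iz2L}\sigma S(\alpha)$ evaluated at $z=\alpha N$. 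So the argument reduces to the separation estimate.

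To estimate the separation, I start from the observation that $\lambda\leq \nu+C_0 N^{-2}$ forces $z:=\alpha N\leq\sqrt{C_0}$. Since $\alpha_{i,N}^\rho = N^{-1}Z_i^\rho(N^{-1})$ with $Z_i^\rho(0)=z_i$ and $|(Z_i^\rho)'|\leq C_0'$ by \eqref{eqn a' bound}, only indices $i$ with $z_i\leq\sqrt{C_0}+O(N^{-1})$ can yield a point in this range; by Proposition~\ref{prop alpha=0} this is a finite set $\{1,\dots,i_0\}$, bounded in terms of $C_0$ alone. For two distinct pairs $(i,\rho)\neq(i',\rho')$ with $i,i'\leq i_0$ I distinguish two cases. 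When $i\neq i'$, the $z_i$ are distinct and hence separated by some fixed $\gamma>0$; the expansion $Z_i^\rho(t)=z_i+O(t)$ gives $|\alpha_{i,N}^\rho-\alpha_{i',N}^{\rho'}|\geq\gamma/(2N)$ for $N$ large, so the corresponding $\lambda$-values differ by at least $c/N^2$. When $i=i'$ but $\rho\neq\rho'$, the analytic functions $Z_i^\rho$ and $Z_i^{\rho'}$ are pairwise distinct (Theorem~\ref{thmscattef1}(a)) and agree at $t=0$, so their difference vanishes to some finite order $k_0\geq 1$ there; this yields $|Z_i^\rho(N^{-1})-Z_i^{\rho'}(N^{-1})|\geq cN^{-k_0}$, hence $|\alpha_{i,N}^\rho-\alpha_{i,N}^{\rho'}|\geq cN^{-(k_0+1)}$ and $|\lambda_{i,N}^\rho-\lambda_{i,N}^{\rho'}|\geq cN^{-(k_0+2)}$. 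Maximising $k_0$ over the finite collection of such pairs produces a uniform polynomial lower bound.

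In either case the $\lambda$-separation is bounded below by $cN^{-p}$ for some $p$ independent of $N$, which exceeds $4Ce^{-cN}$ once $N$ is large. For each $\lambda\in\Lambda_N$ with $\lambda\leq\nu+C_0 N^{-2}$ I may then choose $I$ to be a symmetric interval of width $Ce^{-cN}$ around $\lambda$ satisfying $I_{2Ce^{-cN}}\cap\Lambda_N=\{\lambda\}$, and Theorem~\ref{thm main general theorem lambda>nu}(b) delivers the desired approximation $\distsymm(\Eig_{I',N},\Eigapp_{I,N})\leq Ce^{-cN}$. Disjointness of all clusters in this regime is a byproduct of the same separation estimate.

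The principal obstacle is the case $i=i'$, $\rho\neq\rho'$: ensuring that coalescing branches remain separated by a polynomial (rather than super-polynomial) rate in $1/N$. Pairwise distinctness as real functions is not by itself enough; what rescues the argument is the joint analyticity in $t=N^{-1}$ provided by Theorem~\ref{thmscattef1}(a), which forces the differences of distinct branches to vanish to finite order at $t=0$.
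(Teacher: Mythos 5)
Your argument is correct and follows the same route as the paper: reduce to Theorem~\ref{thm main general theorem lambda>nu}, then establish the polynomial separation of the finitely many $\lambda\in\Lambda_N$ in this regime by exploiting the distinctness of the analytic functions $Z_i^\rho$ (hence finite order of vanishing of their pairwise differences at $t=0$). The paper's proof is more terse but invokes exactly this reasoning; your case split $i\neq i'$ versus $i=i',\rho\neq\rho'$ just spells out what the paper compresses into ``by Theorem~\ref{thmscattef1} and the fact that the $Z_i^\rho$ are different analytic functions.''
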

\begin{proof}
The first statement is just \eqref{eqn evdistance}. There are only a bounded number of $\lambda\in\Lambda_N$ satisfying $\lambda\leq \nu+C_0N^{-2}$ by the Weyl asymptotics \eqref{eqn asymptotics}. They are polynomially separated, i.e. there are $K\in\N, c>0$ (depending on $C_0$) so that $|\lambda-\lambda'|\geq cN^{-K}$ for any two different such values, by Theorem \ref{thmscattef1} and the fact that the $Z_i^\rho$ are different analytic functions. This implies the disjointness of clusters for large $N$  and that the separation condition in Theorem \ref{thm main general theorem lambda>nu}b) is satisfied for $I=\{\lambda\}$, and this gives the last claim.
\end{proof}
\begin{proof}[Proof of Theorem \ref{thm main general theorem lambda>nu}]
As always, we write $\lambda=\nu+\alpha^2$.

\noindent{\bf Step 1:} Show that the approximate eigenfunctions are actually such: For any intervals as in the theorem we have, for $C$ sufficiently large,
\begin{equation}
\label{eqn Eigapp close to Eig lambda>nu}
\dist (\Eigapp_{I,N},\Eig_{I',N}) < 1.
\end{equation}

Proof: Let $\lambda\in\Lambda_N$ and let $W_\lambda=\Eigapp_{\{\lambda\},N}$.
For $w=\Etilde_{\alpha,\phi}\in W_\lambda$ one has
$$\Delta w = -\lambda w + 2L^{-2}\chi_{N}'\partial_{x}\Piperp E_{\alpha,\phi} + L^{-2}\chi_{N}'' \Piperp E_{\alpha,\phi},$$
and since $\chi_{N}',\chi_{N}''$ are supported in $(N-1,N)$, one obtains from \eqref{expdecaymu>nu}, applied to $u=E_{\alpha,\phi}$ with $M\in (N-1,N)$, that
\begin{equation} \label{eqnwexpest2}
\| (\Delta_{X^N_G}+\lambda)w \|_{X^N_G} \leq C e^{-cN} \|w\|_{X^N_G}.
\end{equation}
Since $W_\lambda\subset \Dom (\Delta_{X^N_G})$ we may apply the Spectral Approximation Lemma \ref{lemmaSAL} to the operator $A=-\Delta_{X^N_G}$, with $\lambda_0=\lambda$ and $\epsilon=C e^{-cN}$ and $\delta>0$ to be chosen, and this gives
\begin{equation}
\label{eqn Eigapp close to Eig lambda>nu 1}
\dist(\Eigapp_{\{\lambda\},N},\Eig_{(\lambda-\delta,\lambda+\delta),N}) \leq C e^{-cN}/\delta.
\end{equation}
If $\delta=2C e^{-cN}$ then this is \eqref{eqn Eigapp close to Eig lambda>nu} with $I=\{\lambda\}$.

To obtain \eqref{eqn Eigapp close to Eig lambda>nu} for arbitrary $I$,  first observe that \eqref{eqn Eigapp close to Eig lambda>nu 1} implies $\dist(\Eigapp_{\{\lambda\},N},\Eig_{J,N}) \leq C e^{-cN}/\delta$ for any interval $J$ containing $(\lambda-\delta,\lambda+\delta)$. Next,
use Lemma \ref{lemma E almost orthogonal} together with a version of \eqref{eqn dist orthogonal} for almost orthogonal subspaces to conclude that
$\dist (\Eigapp_{I,N},\Eig_{I_\delta,N}) \leq \sum\limits_{\lambda\in I\cap\Lambda_N} (\dist(\Eigapp_{\{\lambda\},N},I_\delta) + Ce^{-cN})$. Since $\Lambda_N$ has at most $O(N)$ elements,
the left hand side is bounded by $NCe^{-cN}/\delta$. Hence, choosing $\delta=2NC e^{-cN}$ (or $e^{-c'N}$ with $c'$ smaller than $c$ and $N$ large) one obtains \eqref{eqn Eigapp close to Eig lambda>nu}.

\medskip

\noindent{\bf Step 2:} Show that each eigenvalue $\mu>\nu+e^{-2c_0N}$ of $-\DeltaXNG$ is exponentially close to some $\lambda\in \Lambda_N$ and that, under the assumptions of the theorem,
\begin{equation}
\label{eqn Eig close to Eigapp lambda>nu}
\dist (\Eig_{I',N},\Eigapp_{I,N}) \leq C e^{-cN}.
\end{equation}

Proof: Let $(\DeltaXNG+\mu)u=0$, $\mu=\nu+\beta^2$, so $\beta>e^{-c_0N}$.
For $E=E_{\alpha,\phi}$ recall the notation $(E^0,E^1)=(\Pi E_{x=0},\Pi\partial_\xi E_{x=0})\in V\times V$.
For $\alpha\in A_N$ denote
\begin{equation}
\label{eqn def calF_alpha}
\calF_{\alpha,N} = \{(E^0,E^1):\, E=E_{\alpha,\phi},\ \phi\in\calP_{\alpha,N}\}.
\end{equation}
\noindent{\bf Step 2a:}
$(u^0,u^1)$ is close to $\calL_\beta$ by the elliptic estimate:
\begin{equation}
\label{eqn u0u1 close to calL}
\dist(\Span\{(u^0,u^1)\}, \calL_\beta) \leq C e^{-cN}.
\end{equation}

Proof: Apply the basic elliptic estimate, Lemma \ref{lemma basic ell estimate}, as follows: Let $(E^0,E^1)$ be the orthogonal projection of $(u^0,u^1)$ to $\calL_\beta$. This corresponds to a scattering solution $E$ for spectral value $\mu$. Let $v=u-E$. From $(\Delta+\mu)E=0$, $\Bperp(\mu)E=0$ it follows that $v$ satisfies \eqref{eqn ell system} with $f=0$, $g=\Bperp(\mu)u$, $\calL_\beta'=$ the orthogonal complement of $\calL_\beta$ and $h=0$. From Lemma \ref{lemma eigenfcn est} it follows that $\|g\|_{H^{1/2}(Y)}\leq C e^{-cN}\|u\|_{H^2(X^0)}$, and then \eqref{eqn basic ell estimate} gives
\begin{equation}
\label{eqn v small in H2}
\|v\|_{H^2(X^0)} \leq Ce^{-cN} \|u\|_{H^2(X^0)},\quad v=u-E.
\end{equation}
This implies $\|u\|_{H^2(X^0)} \leq C\|E\|_{H^2(X^0)}$, and with \eqref{eqn E bounded by E0E1} we get
\begin{equation}
\label{eqn u u0u1 estimate}
\|u\|_{H^2(X^0)} \leq C\|(E^0,E^1)\|_{V\times V}.
\end{equation}
Next, the trace theorem implies $\|(v^0,v^1)\|_{V\times V} \leq C\|v\|_{H^2(X^0)}$, and so \eqref{eqn v small in H2} gives
\begin{equation}
\label{eqn v0v1 small}
\|(v^0,v^1)\|_{V\times V}\leq Ce^{-cN}\|(E^0,E^1)\|_{V\times V},
\end{equation}
which after writing $E=u-v$ and absorbing the $v$ term into the left hand side gives
$\|(v^0,v^1)\|_{V\times V}\leq Ce^{-cN}\|(u^0,u^1)\|_{V\times V},$ that is \eqref{eqn u0u1 close to calL}.
\medskip

\noindent{\bf Step 2b:}
Use the matching conditions to show: \eqref{eqn u0u1 close to calL} implies
\begin{align}
\label{eqn beta close to AN}
\dist(\beta,A_N) &\leq  C e^{-cN}\\
\label{eqn u0u1 close to sum of F}
\dist(\Span\{(u^0,u^1)\},\bigoplus_{\alpha\in A_N:|\alpha-\beta|<Ce^{-cN}} \calF_{\alpha,N}) & \leq C e^{-cN}.
\end{align}
The $\calF_{\alpha,N}$ are defined in \eqref{eqn def calF_alpha}.

Proof: Since $(\Delta_{X^N}+\mu)v=0$ one obtains from \eqref{eqn Pi u from u0u1}, using $|\cos{\beta N}|\leq 1,\ |\beta^{-1}\sin(\beta N)|\leq N$, that
$$ \|\BC_\beta(v)\|_V \leq \|(v^0\|_V+(N+\frac1\alpha)\|v^1)\|_{V}\leq e^{c_0N}\|(v^0,v^1\|_{V\times V}.$$
From $\BC_\beta(u)=0$ we have $\BC_\beta(E)=-\BC_\beta(v)$, and then \eqref{eqn v0v1 small}  gives
\begin{equation}
\label{eqn MC estimate for E}
\|\BC_\beta(E)\|_V = \|\BC_\beta(v)\|_V \leq Ce^{-c_1N} \|(E^0,E^1)\|_{V\times V}.
\end{equation}
where $c_1=c-c_0$.
If $E=E_{\beta,\phi}$ then clearly $\|(E^0,E^1)\|_{V\times V}\leq C\|\phi\|_V$, so
we can apply the Stability Theorem \ref{thm stability} with $\delta=Ce^{-c_1N}$ and obtain \eqref{eqn beta close to AN} with $c=c_1$, as well as
$\dist(\Span\{\phi\},\bigoplus_{\alpha'}\calP_{\alpha',N})\leq Ce^{-c_2N}$ where $c_2=c_1/(2+2\dim V)$ and the sum is over $\alpha'\in A_N$ satisfying $|\alpha-\alpha'|\leq Ce^{-c_1N/2}$. This implies that $(E^0,E^1)$ and hence, by \eqref{eqn v0v1 small}, $(u^0,u^1)$ has distance at most $Ce^{-c_2N}$ from $\bigoplus_{\alpha'} \calF_{\alpha',N}$, and hence \eqref{eqn u0u1 close to sum of F}, with $c=c_2$.

End of proof of Step 2: The estimate
$\|v\|_{X^N}\leq CN\|(v^0,v^1)\|_{V\times V}$ for eigensolutions on $X^N$ which are a difference of a scattering solution and an eigenfunction on $\XNG$ (use \eqref{expdecaymu>nu} and a modification of the derivation of \eqref{eqn E bounded by E0E1}) shows that \eqref{eqn u0u1 close to sum of F} implies
\begin{equation}
\label{eqn u close to Eigapp lambda>nu}
\dist(\{u\}, \Eigapp_{J,N}) \leq Ce^{-cN},\quad J=(\mu-Ce^{-cN},\mu+Ce^{-cN}).
\end{equation}
%Here, we used that $\alpha>e^{-c_0 N}$, and $c$ is smaller than the previous constants $c$.
Finally, we apply this to an orthonormal set of eigenfunctions $u$ with eigenvalues in $I'$. Lemma \ref{lemma dist}e) gives $\dist(\Eig_{I',N},\Eigapp_{I'',N})\leq Ce^{-cN}$ with $I''=(I')_{Ce^{-cN}}=I_{2Ce^{-cN}}$ and hence
\eqref{eqn Eig close to Eigapp lambda>nu}, since by assumption any $\lambda\in I''\cap\Lambda_N$ must already lie in $I$.

\medskip

{\bf End of proof of Theorem \ref{thm main general theorem lambda>nu}:}
\eqref{eqn Eigapp close to Eig lambda>nu} and \eqref{eqn Eig close to Eigapp lambda>nu} give part b) of the Theorem by Lemma \ref{lemma dist}b). Part a) then follows easily: Since $\Lambda_N$ has $O(N)$ elements, we may cover it by intervals $I_k$ of length at most $CNe^{-cN}$, satisfying the hypothesis of b) (note that any $\lambda\in\Lambda_N$ is at least $\nu+cN^{-2}$ by \eqref{eqn initial},\eqref{alphasol} since $z_1>0$). The $\mu_i$ must then be in the $Ce^{-cN}$-neighborhoods of the $I_k$ by b), and this implies a), with slightly smaller $c$.
\end{proof}

\subsection{Eigenvalues close to the threshold $\nu$}
Recall from Lemma \ref{lemmabcEthreshold} that $\Pi E_{0,\Phi_+,\Psi_-}$ satisfies the matching conditions at $x=N$ iff $\sigma\Phi_+=\Phi_+$ and $\Psi_-=0$. Denote
$$ V_+^+ := V_+\cap V^+, \quad \Phi_+^+=\text{ the projection to }V_+^+\text{ of }\Phi\in V.$$
Note that, since $\sigma$ and $S(0)$ do not commute in general, some care is needed with this notation. For example, usually $\Phi_+\neq \Phi_+^++\Phi_+^-$ (with $\Phi_+^-$ defined analogously).

For $\Phi\in V_+^+$ let
\begin{equation}
\label{eqn def Etilde0}
\Etilde_{0,\Phi,0} := \begin{cases}
                            E_{0,\Phi,0} & \text{ on } X^0\\
                            \Pi E_{0,\Phi,0} + \chi_N \Piperp E_{0,\Phi,0} & \text{ on } Z^N
\end{cases}
\end{equation}
and
\begin{equation}
\label{eqn def Eigapp nu}
\Eigapp_{\nu,N} := \{\Etilde_{0,\Phi,0}:\,\Phi\in V^+_+\}.
\end{equation}

Here we prove the following:
\begin{theorem} \label{thm general theorem lambda close to nu}
Suppose $\nu$ is not an $L^2$-eigenvalue of $-\DeltaXinfty$. There are $c_0>0$ and $c,C>0$ such that all eigenvalues of $-\DeltaXNG$ in the interval $(\nu-c_0,\nu+e^{-2c_0 N})$ are actually in
$I:=(\nu-e^{-cN},\nu+e^{-cN})$, and
\begin{equation}
\label{eqn Eigclose lambda=nu}
\distsymm(\Eig_{I,N},\Eigapp_{\nu,N}) \leq Ce^{-cN}.
\end{equation}
\end{theorem}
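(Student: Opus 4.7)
The approach follows the two-step structure of the previous subsections.

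\textbf{Step 1 (construction).} Fix $\Phi \in V_+^+$. By Lemma \ref{lemmabcEthreshold} the leading part $\Pi E_{0,\Phi,0}$ satisfies the matching conditions at $x = N$ exactly, so the cutoff in \eqref{eqn def Etilde0} only affects the exponentially decaying $\Piperp E_{0,\Phi,0}$. The calculation \eqref{eqnwexpest2}, adapted to $\alpha = 0$, yields
\[
\|(\Delta_{\XNG} + \nu)\Etilde_{0,\Phi,0}\|_{\XNG} \leq C e^{-cN}\|\Etilde_{0,\Phi,0}\|_{\XNG},
\]
and the Spectral Approximation Lemma \ref{lemmaSAL} gives $\dist(\Eigapp_{\nu,N}, \Eig_{I,N}) \leq C e^{-cN}$, and in particular enough eigenvalues of $-\DeltaXNG$ in $I$.

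\textbf{Step 2 (capture).} Let $u$ be an eigenfunction with eigenvalue $\mu \in (\nu - c_0, \nu + e^{-2c_0 N})$, and write $\mu = \nu + \beta^2$ (with $\beta$ purely imaginary when $\mu < \nu$). First I would repeat the scattering-approximation argument of Step 2a of Theorem \ref{thm main general theorem lambda>nu}: combining Lemma \ref{lemma eigenfcn est} with Lemma \ref{lemma basic ell estimate}, where the scattering subspace $\calL_\beta$ is extended continuously to $\mu \leq \nu$ as the subspace of $V \times V$ coming from solutions on $\Xinfty$ with no exponentially growing part, produces a scattering solution $E$ for spectral value $\mu$ satisfying $\|u - E\|_{H^2(X^0)} \leq C e^{-cN}\|u\|_{H^2(X^0)}$. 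Transferring the exact matching condition $\BC_\beta(u) = 0$ to $E$, and allowing for the factor $|\beta|^{-1} \leq e^{c_0 N}$ in \eqref{eqn def MC alpha}, gives $\|\BC_\beta(E)\|_V \leq C e^{-c_1 N}\|(E^0,E^1)\|_{V \times V}$ with $c_1 = c - c_0$.

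\textbf{Core stability analysis.} The Stability Theorem \ref{thm stability} cannot be invoked directly because the $\phi$-parametrization of $\calL_\beta$ from \eqref{eqn calL for alpha positive} degenerates as $\beta \to 0$. The plan is to work in the analytic $\rho$-parametrization of Lemma \ref{lemma scatt subspace} together with the basis $(\calC_\mu, \calS_\mu)$, which is regular at $\mu = \nu$. Substituting $\Pi E = \calC_\mu E^0 + \calS_\mu E^1$ into the matching conditions at $x = N$ and Taylor-expanding $\calC_\mu,\calS_\mu$ and the $\beta$-dependent coefficients of $E^0,E^1$ in \eqref{eqn calL rescaled} produces a linear system in $\rho$ whose $\beta = 0$ leading part is exactly the system \eqref{eqnbc1} analyzed in Lemma \ref{lemmabcEthreshold}. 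Imitating the pairing argument of that lemma, the coercivity $\langle L\rho_-, \rho_-\rangle \geq \lmin\|\rho_-\|^2$ dominates both the bounded term $\tfrac i 2 S'(0)\rho_-$ (via the factor $N$) and the $O(\beta N^2)$ correction terms (since $|\beta| < e^{-c_0 N}$), forcing $\|\rho_-\| = O(N^{-1} e^{-c_1 N})\|\rho\|$ and then $\rho_+ \in V^+$ up to $O(e^{-c_1 N})\|\rho\|$. Thus $(E^0, E^1)$, and hence $u$, is $Ce^{-cN}$-close to $\Eigapp_{\nu,N}$. To show in addition that $\mu \in I$: if $|\beta| \geq e^{-cN/2}$, the analysis above would force $\beta$ to lie within $Ce^{-cN}/N$ of some $\alpha_{i,N}^\rho$ from Theorem \ref{thmscattef1}(a), and by Proposition \ref{prop alpha=0} each such $\alpha_{i,N}^\rho$ is at least $z_1/(2N)$, contradicting $|\beta| < e^{-c_0 N}$ for large $N$; the parallel obstruction for $\mu < \nu$ uses that $\nu$ is not an $L^2$-eigenvalue of $-\DeltaXinfty$, so there is no discrete spectrum in a punctured neighborhood of $\nu$. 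Combining Step 1 with the resulting distance bound via Lemma \ref{lemma dist}(c) yields \eqref{eqn Eigclose lambda=nu}.

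\textbf{Main obstacle.} The hard part will be the threshold-adapted stability analysis just outlined: the rank drop of $\calL_\beta$ at $\beta = 0$ rules out a direct appeal to Theorem \ref{thm stability}, and the $\xi\Psi_-$ term in \eqref{scattsolnu} contributes a factor of size $N$ at $x = N$ that must be controlled by the coercivity of $L$ rather than absorbed as an error. Keeping careful track of the $N$- and $\beta$-powers in every term of the expanded matching system, so that the exponentially small right-hand side actually translates into an exponentially small $\rho_-$ (and in particular into the conclusion that no eigenvalue can lie in the forbidden window $(\nu - c_0, \nu - e^{-cN}) \cup (\nu + e^{-cN}, \nu + e^{-2c_0 N})$), is where the real work lies.
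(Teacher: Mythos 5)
Step 1 of your proposal matches the paper exactly. But Step 2 has genuine gaps, concentrated precisely at the place the paper identifies as the hard part: ruling out eigenvalues $\mu$ in the forbidden region $(\nu-c_0,\,\nu-e^{-cN})$.

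First, your extension of the scattering subspace to $\mu<\nu$ is not what you say it is. For $\mu<\nu$ not an $L^2$-eigenvalue, the space of solutions on $\Xinfty$ ``with no exponentially growing part'' is the space of $L^2$-solutions, which is $\{0\}$; that cannot serve as $\calL_\beta$ in Lemma \ref{lemma basic ell estimate}. The correct analytic continuation of $\calL_\alpha$ across $\alpha=0$ produces boundary data of solutions whose $\Pi$-part \emph{does} grow exponentially (the role of incoming/outgoing switches), and making the elliptic estimate uniform along this continuation requires its own argument. The paper sidesteps the issue entirely: it compares $u$ with a scattering solution at the \emph{fixed} spectral value $\nu$, using $\calL_0$, and accepts the resulting error $\delta'=|\nu-\mu|+e^{-cN}$.

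Second, and more seriously, your error bookkeeping in the stability analysis is circular for $\mu<\nu$. You control the Taylor-expansion corrections as $O(\beta N^2)$ ``since $|\beta|<e^{-c_0N}$''. For $\mu\geq\nu$ this is automatic from the hypothesis, but for $\mu\in(\nu-c_0,\nu)$ one only knows $|\beta|^2=\nu-\mu<c_0$: not small, let alone exponentially small. Concluding $|\beta|\lesssim e^{-cN}$ is the whole point of the theorem. Moreover, when $\nu-\mu$ is merely bounded (not small), the relevant hyperbolic factors $\cosh(\sqrt{\nu-\mu}\,l_e N)$ appearing in the matching conditions are exponentially \emph{large} in $N$, so they are certainly not $O(\beta N^2)$ corrections; a perturbative Taylor expansion around $\beta=0$ breaks down in exactly this regime. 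The paper instead derives the quadratic-form estimate $|\langle\Phi,\Psi\rangle|\leq C\delta'\|\Phi\|^2+C\|\Psi\|^2$ (which, crucially, does not reference the $V_\pm$ splitting and so can be combined with the matching conditions written in $V^\pm$), substitutes the exact $\cosh/\sinh$ matching identities with $a=\sqrt{\delta}$, $t=\tanh aN$, and then does a non-perturbative case analysis ($a>N^{-1}$ vs.\ $a\leq N^{-1}$) on the inequality $at\leq Ca^2+Ce^{-cN}$ to force $a\leq Ce^{-cN}$. Your proposal does not contain a replacement for this argument.

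Finally, your explanation for why $\mu$ must land in $I$ is not a proof in either direction: Theorem \ref{thmscattef1}(a) only treats $\alpha>0$ and so says nothing about $\mu<\nu$, while ``$\nu$ is not an $L^2$-eigenvalue, hence no discrete spectrum in a punctured neighborhood of $\nu$'' does not by itself exclude eigenvalues of $-\DeltaXNG$ in $(\nu-c_0,\nu-e^{-cN})$, because the cutoff argument of Theorem \ref{thm ev < nu} degrades as $\mu\uparrow\nu$ (the decay rate $\sqrt{\nu-\mu}$ becomes useless). This ``very small eigenvalues'' difficulty is exactly what the paper's explicit $\cosh/\sinh$ computation is there to resolve.
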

See Section \ref{subsec L2 eigenvalues}, esp.\ \eqref{eqn def Eigapp nu with eigenvalue}, for the modification needed in case $\nu$ is an $L^2$-eigenvalue.
\begin{proof}
{\bf Step 1:} Show that the approximate eigenvalues are actually such:
\begin{equation}
\label{eqn Eigapp close to Eig lambda=nu}
\distsymm(\Eigapp_{\nu,N},\Eig_{I,N}) < 1.
\end{equation}
Proof: This is proved in exactly the same way as \eqref{eqn Eigapp close to Eig lambda>nu} (with $I=\{\lambda\}$).
\medskip

\noindent{\bf Step 2:} Show that each eigenvalue $\mu\in (\nu-c_0,\nu+e^{-2c_0N})$ of $-\DeltaXNG$ is
in $I$ and that
\begin{equation}
\label{eqn Eig close to Eigapp lambda=nu}
\dist (\Eig_{I,N},\Eigapp_{\nu,N}) \leq C e^{-cN}.
\end{equation}

Proof: Define
\begin{equation}
\label{eqn def calF_0}
\calF_{0,N} = \{(E^0,E^1):\, E=E_{0,\Phi,0},\ \Phi\in V_+^+\}
\end{equation}
analogous to \eqref{eqn def calF_alpha}.
Let $u$ be an eigenfunction of $-\DeltaXNG$, with eigenvalue $\mu$.
Let $$\delta=|\nu-\mu|.$$
Since  there are no scattering solutions with $\mu<\nu$ we compare $u$ with a scattering solution for $\lambda=\nu$.

\noindent{\bf Step 2a:}
$(u^0,u^1)$ is close to $\calL_0$ by the elliptic estimate:
\begin{equation}
\label{eqn PhiPsi close to calL0}
\dist(\Span\{(u^0,u^1)\}, \calL_0) \leq C\delta',\quad \delta' := \delta +  e^{-cN}.
\end{equation}

Proof: Denote
$$\Phi = u^0,\ \Psi=u^1,\quad E=E_{0,\Phi_+,\Psi_-}.$$
We apply the elliptic estimate, Lemma \ref{lemma basic ell estimate}, to the difference $v=u-E$, with $\lambda=\nu$ and  $\calL'_0=\{(v^0,v^1): \, (v^0)_+=0,(v^1)_-=0\}$. By \eqref{eqn L0}, this is transversal to $\calL_0$.
$v$ satisfies
$(\Delta + \nu) v = (\Delta+\nu)u=(\nu-\mu) u$ and $P_{\calL_0,\calL_0'}(v^0,v^1)=0$ (since $(v^0,v^1)\in \calL_0'$ by construction).
Also, $\Bperp(\nu)v = \Bperp(\nu)u= \Bperp(\mu)u + (\Bperp(\nu)-\Bperp(\mu))u$, and \eqref{eqn Bperp estimate} gives
$\|\Bperp(\mu)u\|_{H^{1/2}(Y)} \leq Ce^{-cN}\|u\|_{H^2(X^0)}$ while clearly $\|(\Bperp(\nu)-\Bperp(\mu))u\|_{H^{1/2}(Y)}\leq C|\nu-\mu|\cdot\|u\|_{H^2(X^0)}$.
The elliptic estimate \eqref{eqn basic ell estimate} now gives
\begin{equation}
\label{eqn vu est l=n}
\|v\|_{H^2(X^0)}\leq C\delta' \|u\|_{H^2(X^0)},\quad \delta' := \delta + e^{-cN}.
\end{equation}
For $\delta$ sufficiently small, this implies $\|u\|_{H^2(X^0)}\leq C\|E\|_{H^2(X^0)}$. Using \eqref{eqn E bounded by E0E1}  and $\|(v^0,v^1)\|_{V\times V}\leq C\|v\|_{H^2(X^0)}$ from the trace theorem we get
$\|(v^0,v^1)\|_{V\times V}\leq C\delta' \|(E^0,E^1)\|_{V\times V}$, that is, \eqref{eqn PhiPsi close to calL0}.
\medskip

\noindent{\bf Step 2b:}
Use the matching conditions to show: \eqref{eqn PhiPsi close to calL0} implies that there is a constant $c_0>0$ so that for $\mu>\nu-c_0$
\begin{align}
\label{eqn mu close to nu}
|\mu-\nu| &\leq  Ce^{-cN}\\
\label{eqn u0u1 close to F_0}
\dist(\Span\{(u^0,u^1)\},\calF_{0,N}) & \leq C e^{-cN}.
\end{align}

Proof: First,
note that  $E^0 = \Phi_+ + T\Psi_-$ (with $T:=\frac i2 S'(0)$), $E^1=\Psi_-$ and $v^0=u^0-E^0 = \Phi_- - T\Psi_-$, $v^1=u^1-E^1 = \Psi_+$ imply that \eqref{eqn PhiPsi close to calL0} is equivalent to
\begin{equation}
\label{eqn Phi-Psi+ estimate}
\|\Phi_- - T\Psi_-\| + \|\Psi_+\| \leq C\delta' (\|\Phi_+\| + \|\Psi_-\|).
\end{equation}
Using $\langle\Phi,\Psi\rangle = \langle\Phi_+,\Psi_+\rangle + \langle\Phi_-,\Psi_-\rangle$ one gets from this
\begin{equation}
\label{eqn Phi times Psi estimate}
|\langle\Phi,\Psi\rangle| \leq  C\delta' \|\Phi\|^2 + C\|\Psi\|^2.
\end{equation}
Note that this estimate does not involve the $V_\pm$ splitting. This is essential for the argument.

We now consider the cases $\mu<\nu$ and $\mu\geq\nu$ separately.

\medskip

\noindent{\bfseries The case $\mu<\nu$}

For the sake of clarity we assume for the following argument that $L=I$. The case of general $L$ requires only adjusting the constants.

Let $a=\sqrt\delta$ and $t=\tanh aN$.
The matching conditions \eqref{eqn matching u} for  $\Pi u=\Phi\cosh a x + \Psi \frac {\sinh ax}a$ are
\begin{align}
\label{eqn matching-}
\Phi^- + a^{-1}t\Psi^- &= 0\\
\label{eqn matching+}
at\Phi^+ + \Psi^+ & = 0.
\end{align}
This implies $\langle\Phi,\Psi\rangle = \langle\Phi^+,\Psi^+\rangle + \langle\Phi^-,\Psi^-\rangle = -at\|\Phi^+\|^2 - a^{-1}t\|\Psi^-\|^2$ and so
\begin{align}\label{eqn 1}
|\langle\Phi,\Psi\rangle| & = at \|\Phi^+\|^2 + a^{-1}t\|\Psi^-\|^2 \\
\|\Phi\|^2  & =  \|\Phi^+\|^2 + a^{-2}t^2\|\Psi^-\|^2 \\
\label{eqn 3}
\|\Psi\|^2 & = a^2t^2 \|\Phi^+\|^2 + \|\Psi^-\|^2
\end{align}
Now \eqref{eqn Phi times Psi estimate} implies that at least one of the following inequalities must hold:
\begin{align}
\label{eqn ineq a}
at & \leq C\delta' + Ca^2 t^2\\
\label{eqn ineq b}
a^{-1}t & \leq C\delta' a^{-2} t^2 + C.
\end{align}
Multiply the second inequality by $a^2$, plug in $\delta'=a^2+e^{-cN}$ and use $0<t<1$ to see that the second inequality implies the first. So \eqref{eqn ineq a} holds. We claim that there is $a_0>0$ so that $a<a_0$ implies $a\leq Ce^{-cN}$. To see this, first observe that the $Ca^2t^2$ term on the right may be absorbed into the left for sufficiently small $a$, since $t<1$. So we get $at\leq Ca^2 + Ce^{-cN}$. Now for $a>N^{-1}$ we have $t\geq \tanh 1$, so the $a^2$ term may be absorbed into the left, which yields $a\leq Ce^{-cN}$, while for $a\leq N^{-1}$ we have $t\geq c' aN$ for some constant $c'>0$, and this gives $a\leq Ce^{-cN/2}$.

We have shown that $a^2=|\nu-\mu|\leq Ce^{-cN}$ if $a<a_0$, that is, \eqref{eqn mu close to nu}.

In particular, $a^{-1}t\sim N$. Now use \eqref{eqn Phi times Psi estimate} again in conjunction with \eqref{eqn 1}-\eqref{eqn 3}, where we keep only the $\Psi^-$ term on the left hand side, to obtain
$N\|\Psi^-\|^2 \leq Ce^{-cN}(\|\Phi^+\|^2 + N^2\|\Psi^-\|^2) + C\|\Psi^-\|^2$. For large $N$ the $\Psi^-$ terms on the right may be absorbed, and one obtains
$\|\Psi^-\|\leq Ce^{-cN}\|\Phi^+\|$. Together with \eqref{eqn matching-} this gives
\begin{equation}
\label{eqn Phi^- very small}
\|\Phi^-\| \leq Ce^{-cN}\|\Phi\|.
\end{equation}
Also, since \eqref{eqn matching+} gives $\|\Psi^+\|\leq Ce^{-cN}\|\Phi^+\|$, we obtain
\begin{equation}
\label{eqn Psi very small}
\|\Psi\| \leq e^{-cN}\|\Phi\|.
\end{equation}
Now \eqref{eqn Phi-Psi+ estimate} implies
\begin{equation}
\label{eqn Phi_- very small}
\|\Phi_-\|\leq e^{-cN} \|\Phi\|.
\end{equation}
Finally, \eqref{eqn Phi^- very small} and \eqref{eqn Phi_- very small}
imply by an elementary argument
%\begin{equation}
%\label{eqn Phi++ bound}
%\end{equation}
$\|\Phi-\Phi_+^+\| \leq Ce^{-cN} \|\Phi\|$. Therefore, $\|(\Phi,\Psi)-(\Phi_+^+,0)\|\leq Ce^{-cN}\|(\Phi,\Psi)\|$, that is, \eqref{eqn u0u1 close to F_0}.
\medskip

\noindent{\bf The case $\mu\geq\nu$:}
Let again $a=\sqrt{\delta}$, but now $t=\tan aN$, with $a^{-1}t:=N$ if $a=0$. Since we assume $a\leq Ce^{-cN}$ for this case, we may argue as in the last part of the argument for $\mu<\nu$ (starting with the paragraph before \eqref{eqn Phi^- very small}). Observe that now \eqref{eqn matching+} is replaced by $-at\Phi^++\Psi^+=0$, which yields
$|\langle\Phi,\Psi\rangle| = |at\|\Phi^+\|^2 - a^{-1}t\|\Psi^-\|^2|$ instead of \eqref{eqn 1} (so \eqref{eqn Phi times Psi estimate} gives only weaker conclusions than before), but the conclusions are still valid since $at\|\Phi^+\|^2\leq Ce^{-cN}\|\Phi^+\|^2$.
\medskip

End of proof of Step 2: Exactly as in the proof of Theorem \ref{thm main general theorem lambda>nu}, it follows from Step 2b that $\dist(\Span\{u\},\Eigapp_{0,N})\leq Ce^{-cN}$; applying this to an orthonormal basis of $\Eig_{I,N}$ and using Lemma \ref{lemma dist}d),e) we get the claim.
\medskip

\noindent{\bf End of proof of Theorem \ref{thm general theorem lambda close to nu}:}
The claim follows directly from Steps 1 and 2, using Lemma \ref{lemma dist}b).
\end{proof}
\subsection{The case of embedded $L^2$-eigenvalues}
\label{subsec L2 eigenvalues}
Here we sketch the modifications necessary in the arguments to deal with the case that $-\DeltaXinfty$ has $L^2$-eigenvalues embedded in the essential spectrum. For simplicity we will restrict to the analysis of eigenvalues near $\nu$, in case that $\nu$ is an eigenvalue of $-\DeltaXinfty$. The case of $L^2$-eigenvalues bigger than $\nu$ is treated similarly.

Let $\calH= \{u\in L^2(\Xinfty):\, (\DeltaXinfty + \nu)u=0\}.$
If $\calH\neq 0$, Theorem \ref{thm general theorem lambda close to nu} holds with the definition of $\Eigapp_{\nu,N}$ replaced by
\begin{equation}
\label{eqn def Eigapp nu with eigenvalue}
\Eigapp_{\nu,N}:= \{\Etilde_{0,\Phi,0}:, \Phi\in V_+^+\} + \{\chi_N u:\, u\in\calH\}.
\end{equation}
Also, Theorem \ref{thm main general theorem lambda>nu} continues to hold as stated (if there are embedded eigenvalues $\lambda>\nu$ then its statement has to be modified in a straightforward way).

To prove this, we have to first modify the elliptic estimate, Lemma \ref{lemma basic ell estimate}. We are interested in  $\alpha$ near $0$. Let $\calH_0$ be the space of restrictions of elements of $\calH$ to $X^0$ and $\calH_0^\perp$ its orthogonal complement in $L^2(X^0)$. Then the elliptic estimate as stated cannot hold since the homogeneous problem (i.e., $f=g=h=0$ in \eqref{eqn ell system}) has solution space $\calH_0$. However, the same argument as given there shows that the same estimate holds if $u\in\calH_0^\perp$, and this gives
\begin{equation}
\label{eqn basic ell estimate modified}
\|u-P_0u\|_{H^2(X^0)} \leq C (\; \|f\|_{L^2(X^0)} + \|g\|_{H^{1/2}(Y)} + \|h\|_{V\times V}\;)
\end{equation}
where $P_0:L^2(X^0)\to\calH_0$ denotes the orthogonal projection.

Next, we have the following almost orthogonality statement analogous to Lemma \ref{lemma E almost orthogonal}:

If $u$ is an eigenfunction of $-\DeltaXNG$, with eigenvalue $\mu\neq\nu$, then
\begin{equation}
\label{eqn u u_0 almost orthogonal}
\|P_0u\|_{H^2(X^0)} \leq C\frac{e^{-cN}}{|\mu-\nu|} \|u\|_{X^0}.
\end{equation}
For the proof it suffices to show the same estimate for the $L^2(X^0)$ norm of $P_0u$ by standard elliptic regularity, and for this we need to show $|\langle u,u'\rangle| \leq C\frac{e^{-cN}}{|\mu-\nu|}\|u\|\cdot\|u'\|$ for all $u'\in\calH_0$, with scalar product and norms in $L^2(X^0)$. For this, do the analogous calculation as at the start of the proof of Lemma \ref{lemma E almost orthogonal}, then use that $\Pi u'=0$ and that $\|u'_{x=N}\|_Y \leq Ce^{-cN}\|u'\|$, $\|\Piperp u_{x=N}\|_Y\leq Ce^{-cN}\|u\|$, with the same estimate for the $\xi$-derivatives.

Now the proof of Theorem \ref{thm main general theorem lambda>nu} goes through as before since for $\mu>\nu+Ce^{-c_0N}$ \eqref{eqn u u_0 almost orthogonal} shows that \eqref{eqn basic ell estimate modified} reduces to the 'old' elliptic estimate \eqref{eqn basic ell estimate}.

For the proof of Theorem \ref{thm general theorem lambda close to nu} we first observe that Step 1 may be proved simply by a combination of the proofs of the Steps 1 in Theorems \ref{thm ev < nu} and \ref{thm main general theorem lambda>nu}. Next, for Step 2 we may assume right away that $|\mu-\nu|\leq Ce^{-cN}$ since otherwise the 'old' elliptic estimate holds (see the previous paragraph) and the proof does not need to be modified.
Now for an eigenfunction $u$ of $-\DeltaXNG$ let $u_0\in\calH$ be the eigenfunction of $-\DeltaXinfty$ restricting to $P_0u$, and let $\utilde=u-u_0$. Then, since $\utilde_{|X^0}=(I-P_0)(u_{|X^0})$, \eqref{eqn basic ell estimate modified} is just the 'old' elliptic estimate for $\utilde$, so the proofs of Steps 2a and 2b go through for $\utilde$ instead of $u$ as before (with minor modifications when using \eqref{eqn Bperp estimate}, and the matching conditions only satisfied up to an exponentially small error because of the $u_0$ term, which is inessential for the resulting estimate), and this gives that $\utilde$ is exponentially close to an $\Etilde_{0,\Phi,0}$ and hence that $u=u_0+\utilde$ is exponentially close to $\Eigapp_{\nu,N}$.

\subsection{Proof of Theorem \ref{maintheorem1}}
First, choose $c_0>0$ so that the conclusion of Theorem \ref{thm general theorem lambda close to nu} holds. Since $\calP_0=V_+^+$, \eqref{eqn Eigclose lambda=nu} gives the eigenvalues in Theorem \ref{maintheorem1}b), by Lemma \ref{lemma dist}b)
(and actually precise information on the eigenfunctions). Next, with this $c_0$ apply Theorem \ref{thm ev < nu}, then \eqref{eqn Eig close lambda<nu} gives the eigenvalues a) (for $\tau_p<\nu$), and Theorem \ref{thm main general theorem lambda>nu}, then \eqref{eqn evdistance} gives the eigenvalues in c). The eigenvalues close to those $\tau_p$ which are $\geq\nu$ are obtained using the argument in the preceding subsection. The cited theorems also give that there are no other eigenvalues.

\section{Identifying the quantum graph; special cases}
\label{sec quantum graph}
\begin{proof}[Proof of Theorem \ref{mainthmquantumgraph}]
We first discuss how to obtain the eigenvalues of a quantum graph. The metric graph $(G,2L)$ (that is, the graph $G$ with given edge lengths $2l_e$, considered as a one-dimensional simplicial complex, i.e. as a union of intervals glued at the vertices) is just the space $X_G^1$ defined in \eqref{eqn def XNG}, with vertex and edge manifolds all equal to a point. Here we disregard the dimension requirement on the vertex and edge manifolds; but since the dimension requirement was never used (except implicitly in the validity of the theorems of scattering theory) we may use all previous results except those on scattering theory.
Scattering theory is replaced as follows. A boundary condition at the vertices of $G$ corresponds to a scattering matrix $S_G(\alpha)$, defined for $\alpha\neq0$ by the requirement that the function $e^{-i\alpha\xi}\phi_G + e^{i\alpha\xi}S_G(\alpha)\phi_G$ on $X^1$ satisfy the boundary condition for each $\phi_G\in V_G$. By Lemma \ref{lemmabcE} this function satisfies the matching condition at $x=1$, i.e.\ extends to a smooth function on the metric graph, iff
\begin{equation}
\label{eqn eveqn on G}
(I-e^{i\alpha 2L} \sigma S_G(\alpha) )\phi_G = 0.
\end{equation}
Since $\nu_G=0$, this means that the positive eigenvalues of this quantum graph are the squares of those $\alpha$ for which \eqref{eqn eveqn on G} has a solution $\phi_G\neq0$ (counted with multiplicity, defined as dimension of the space of those $\phi_G$).

On the other hand, from \eqref{eqn alpha-formel} and \eqref{eqn initial} we have that the positive $b_k$ in Theorem \ref{maintheorem_Gepsilon} are precisely the squares of those $z>0$ for which
$(I-e^{iz2L}\sigma S(0))\phi$ has a solution $\phi\neq 0$, counted with multiplicity.

It follows that we should choose boundary conditions for the quantum graph such that
\begin{equation}
\label{eqn SG}
 S_G(\alpha) = S(0) \quad\text{ for all }\alpha.
\end{equation}
In particular, we should take $V_G=V$, which leads us to consider functions which on the edge $e$ take values in $\calN_e$; Lemma \ref{lemma scatt subspace} shows that \eqref{eqn SG} yields the boundary conditions \eqref{eqn bc1}, \eqref{eqn bc2}. Finally, Lemma \ref{lemmabcEthreshold} shows that also the zero eigenvalues of the quantum graph correspond to the $b_k=0$.
\end{proof}
Note that the proof also gives a correspondence of the leading parts of eigenfunctions (since they are determined by $\phi_G$ and $\phi$).

We recover previously known results easily. The operator on the quantum graph in Theorem \ref{mainthmquantumgraph} is sometimes called the limit operator. For the following statement, see the remarks after that theorem, and for the notation the beginning of Section \ref{secestef}.
\begin{theorem}
\label{thm special cases}
Suppose all vertex and edge manifolds are connected.
Let $\lambda_0$ be the smallest eigenvalue of $-\Delta_{X^0,\calN}$, where $\calN$ means that, in addition to the D/N (resp. Robin) boundary conditions at $\partial X^0\setminus Y$, we impose Neumann boundary conditions at $Y$. Then $-\DeltaXinfty$ has no $L^2$-eigenvalues $\leq \lambda_0$, and:
\begin{enuma}
\item
If $\lambda>\nu$ then we have Dirichlet conditions, i.e. decoupling, in the limit operator.
\item
If $\lambda=\nu=0$ then we have Kirchhoff boundary conditions in the limit operator.
\end{enuma}
\end{theorem}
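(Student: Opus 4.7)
My plan is to base everything on one Green's-identity/Rayleigh-quotient computation on the compact part $X^0$, exploiting the sign of the boundary term produced by polynomially bounded solutions on the cylindrical ends.

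For the spectral statement, let $u$ be an $L^2$-eigenfunction of $-\DeltaXinfty$ with eigenvalue $\mu\leq\lambda_0$; we may assume $\mu<\nu_1$. By separation of variables (Lemma \ref{lemmasepvar}) together with the $L^2$ condition on each end, $\Pi u$ equals $e^{-\sqrt{\nu-\mu}\xi}\phi$ if $\mu<\nu$ and vanishes identically if $\mu\geq\nu$, while $\Piperp u=\sum_{k\geq 1}e^{-\sqrt{\nu_k-\mu}\xi}\phi_k$. Green's identity on $X^0$ (the D/N or Robin data kill all boundary contributions off $Y$) gives
\begin{equation*}
\int_{X^0}|\nabla u|^2-\mu\int_{X^0}|u|^2=\int_Y(\partial_\xi u)\,\overline u=-\sqrt{\nu-\mu}\|\phi\|^2-\sum_{k\geq 1}\sqrt{\nu_k-\mu}\|\phi_k\|^2\leq 0,
\end{equation*}
so the Rayleigh quotient of $u|_{X^0}$, taken with no boundary condition imposed at $Y$, is $\leq\mu\leq\lambda_0$. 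The variational characterisation of $\lambda_0$ then forces equalities throughout: $u|_{X^0}$ is a ground state of $-\Delta_{X^0,\calN}$, and $\phi=0$, $\phi_k=0$, so $u$ vanishes on every cylinder and in particular $u|_Y=0$. But the ground state is strictly positive on each connected vertex manifold attaining $\lambda_0$ and carries only Neumann data on $Y$, so it cannot vanish there; hence $u|_{X^0}\equiv 0$ and $u\equiv 0$.

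For part (a), it suffices, by Lemma \ref{lemma scatt subspace} and the first bullet of remark 3 after Theorem \ref{mainthmquantumgraph}, to show $V_+=\ker(S(0)-\Id)=\{0\}$. If $\Phi_+\in V_+\setminus\{0\}$, Theorem \ref{thmscatt2}(c) yields a bounded generalised eigenfunction $E=E_{0,\Phi_+,0}$ with $\Pi E\equiv\Phi_+$ (hence $\Pi\partial_\xi E|_Y=0$) and with $\Piperp E$ exponentially decaying. Repeating the Green identity at $\mu=\nu$ gives $\int_{X^0}|\nabla E|^2\leq\nu\int_{X^0}|E|^2$, whence $\lambda_0\leq\nu$ by the variational principle, contradicting $\lambda_0>\nu$ unless $E|_{X^0}=0$; and then $\Phi_+=\Pi E|_Y=0$.

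For part (b), work vertex by vertex using $\Xinfty=\bigsqcup_v X_v^\infty$, $V=\bigoplus_v V_v$, $S(0)=\bigoplus_v S_v(0)$. For $\Phi_+\in V_+\cap V_v$ the same argument on $X_v$ forces the Rayleigh quotient to equal $\lambda_{0,v}=0$, so $E|_{X_v}$ is a constant $c_v$, from which $\Piperp E=0$ on every end attached to $v$ and $\Phi_+|_{Y_{\bar e}}\equiv c_v$ for all $\bar e\sim v$. Under the identification of each $\calN_e$ with $\C$ via the normalised constant (legitimate because the $Y_e$ are connected), this says that $V_+\cap V_v$ is one-dimensional and corresponds to the span of $(1,\dotsc,1)\in\C^{\deg(v)}$, which by remark 3(ii) after Theorem \ref{mainthmquantumgraph} is the Kirchhoff boundary condition. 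The delicate step throughout is the rigidity in the Rayleigh-equality case: it relies on strict positivity of the ground state of $-\Delta_{X^0,\calN}$ at $Y$ on the vertex components attaining $\lambda_0$, which is where the connectedness hypothesis on the vertex manifolds enters.
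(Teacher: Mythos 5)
Your argument is correct and follows essentially the same route as the paper: the Green's identity on $X^0$ with the nonpositive boundary term at $Y$ coming from the decaying cylinder modes, the resulting Rayleigh-quotient bound against $\lambda_0$, and the reduction of (a) and (b) to classifying the bounded solutions at $\lambda=\nu$ (empty, resp.\ the locally constant functions). The one genuine addition is the strict-positivity/Hopf argument ruling out an $L^2$-eigenvalue \emph{equal} to $\lambda_0$ --- a borderline case the paper's proof passes over silently, whereas you handle it explicitly.
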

In particular, for Neumann boundary conditions on all of $\partial \XNG$ one has Kirchhoff boundary conditions, as proved in \cite{ExnPos:CSGLTM}.
\begin{proof}
We prove the following stronger statement: If $\lambda<\lambda_0$ then the equation $(\DeltaXinfty+\lambda)u=0$ can have no bounded solution, and if $\lambda=\lambda_0=0$ the only bounded solutions are constant on each $X_v^\infty$.

By Theorem \ref{mainthmquantumgraph}, with Remark 3 following it, this implies the theorem since $L^2$-solutions are bounded and since elements in the $(+1)$-eigenspace of $S(0)$ correspond to bounded solutions by Theorem \ref{thmscatt2}c).

First, by Lemma \ref{lemmasepvar}, for a bounded solution $u$ with $\lambda\leq\nu$ we must have
$\psi=0,\psi_k=0\,\forall k$ in \eqref{eqn1} resp.\ \eqref{eqn2a} and \eqref{eqn3}, and this implies
$$ \langle u_{x=0},(\partial_\xi u)_{x=0}\rangle_Y \leq 0.$$
The same is true for an $L^2$-solution for any $\lambda$.
Green's theorem implies
\begin{align}
\lambda\int_{X^0} |u|^2 &= \int_{X^0} u(-\Delta \ubar) = -\langle u_{x=0},(\partial_\xi u)_{x=0}\rangle_Y + \int_{X^0} |\nabla u|^2\\
 & \geq \int_{X^0} |\nabla u|^2,
\end{align}
so $\frac{\int_{X^0}|\nabla u|^2}{\int_{X^0}|u|^2}\leq\lambda$ if $u_{|X^0}\not\equiv0$. Since $u_{|X^0}$ may be taken as test function in the variational characterization of $\lambda_0$, this implies $\lambda_0\leq\lambda$ and hence the first claim. If $\lambda=\lambda_0=0$ then it implies $\nabla u\equiv 0$, so $u$ is constant on $X_v^0$ (since $X_v^0$ is connected) and hence on $X_v^\infty$ by unique continuation. Note that for $\nu=0$ connectedness of $Y_e$ implies that $\calN_e\cong\C$ canonically (the constant functions).
\end{proof}

\section{Appendix: Monotone unitary families} \label{sec monotone unitary}
In this appendix we collect some results on analytic one-parameter families of unitary operators which we need. Discussion and proofs can be found in \cite{Gri:MUF}.

Let $U(x)$ be a family of unitary operators on a Hermitian vector space $V$, of dimension $M<\infty$, depending real analytically on $x\in\R$. Then
\begin{equation}
\label{eqn monotone}
D(x) := \frac 1 i U'(x) U(x)^{-1}
\end{equation}
is symmetric, where $U'(x)$ is the derivative with respect to $x$.  Assume that $U$ is monotone, i.e. $D(x)$ is positive for all $x$, and more precisely that there are constants $\dmin,\dmax,d_2>0$  such that
\begin{equation}
\label{eqn dminmax def}
\dmin I\leq D(x) \leq \dmax I, \quad \|U''(x)\| \leq d_2\quad\text{ for all }x.
\end{equation}
Denote
$$ W(x)=\Ker (I-U(x))\quad\text{ and } \calZ = \{x:\, W(x)\neq \{0\}\}.$$
Thus $x\in\calZ$ iff $U(x)$ has eigenvalue one.

A special case of this setup is $U(x)=e^{ix}U_0$ for a unitary $U_0$. Then $\calZ$ is discrete and $2\pi$-periodic, and $W(x)$ is the eigenspace of $U_0$ with eigenvalue $e^{-ix}$.
The following statements generalize this and well-known facts about eigenspaces to our more general situation.

\begin{lemma}
\label{lemma counting ev}
$\calZ\subset\R$ is a discrete subset, and more precisely for all $A<B$
\begin{equation}
\label{eqn asymp}
\left| \sum\limits_{x:A< x < B} \dim W(x) - \frac 1 {2\pi}\int_A^B \tr D(x)\, dx \right| < M (:=\dim V)
\end{equation}
\end{lemma}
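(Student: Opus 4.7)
The plan is to diagonalize the family $U(x)$ globally by a real-analytic unitary frame, convert the statement into a zero-counting problem for the resulting scalar phase functions, and then exploit the monotonicity hypothesis on $D$ to control that count.

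First I would invoke the analytic version of Rellich's theorem for normal families on a finite-dimensional space to produce real-analytic functions $\theta_1,\dots,\theta_M:\R\to\R$ and a real-analytic orthonormal frame $v_1(x),\dots,v_M(x)$ of $V$ satisfying
$$ U(x)v_j(x) = e^{i\theta_j(x)}v_j(x),\quad j=1,\dots,M.$$
Differentiating this relation in $x$, substituting $U'=iDU$, and pairing with $v_j$ (using $\langle Uv_j',v_j\rangle = \langle v_j',U^{-1}v_j\rangle = e^{i\theta_j}\langle v_j',v_j\rangle$ to cancel the $v_j'$ contributions) yields
$$ \theta_j'(x) = \langle D(x)v_j(x),v_j(x)\rangle.$$
The hypothesis $\dmin I\le D(x)\le\dmax I$ then forces $\dmin\le\theta_j'(x)\le\dmax$, so each $\theta_j$ is strictly monotone increasing, and summing over $j$ produces $\sum_j\theta_j'(x)=\tr D(x)$.

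Next, since $x\in\calZ$ iff $\theta_j(x)\in 2\pi\Z$ for some $j$, strict monotonicity makes each preimage $\theta_j^{-1}(2\pi\Z)$ locally finite, so $\calZ$ is discrete. For the counting step, at any $x_0\in\calZ$ one has $\dim W(x_0)=\#\{j:\theta_j(x_0)\in 2\pi\Z\}$, hence
$$\sum_{A<x<B}\dim W(x)=\sum_{j=1}^M N_j,\quad N_j:=\#\{x\in(A,B):\theta_j(x)\in 2\pi\Z\}.$$
For each $j$, by strict monotonicity $N_j$ equals the number of integer multiples of $2\pi$ lying strictly between $\theta_j(A)$ and $\theta_j(B)$, which differs from $(\theta_j(B)-\theta_j(A))/(2\pi)$ by at most $1$, and by strictly less than $1$ unless both endpoint values happen to lie in $2\pi\Z$. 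Using $\theta_j(B)-\theta_j(A)=\int_A^B\theta_j'\,dx$ and summing gives
$$\Bigl|\sum_{A<x<B}\dim W(x)-\frac1{2\pi}\int_A^B\tr D(x)\,dx\Bigr|\le M,$$
with strict inequality as soon as some $\theta_j(A)$ or $\theta_j(B)$ fails to lie in $2\pi\Z$, which is the generic case intended by the statement.

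The only nonroutine ingredient is Step 1, the global real-analytic diagonalization. Although at an eigenvalue crossing a naive branching could be feared, in the real-analytic category eigenvalues and eigenprojections of a normal (here unitary) one-parameter family can be continued analytically through crossings, producing a genuinely global analytic frame; this is the main technical input, classical but with a careful proof given for unitary families in \cite{Gri:MUF}. Once that is in hand, the rest of the argument is the elementary monotone counting just sketched.
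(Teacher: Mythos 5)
The paper does not prove this lemma here---it defers entirely to \cite{Gri:MUF}---so there is no internal argument to compare against. Your approach is the natural one and is surely what that reference does: globally diagonalize $U(x)$ by a real-analytic orthonormal eigenframe $v_1(x),\dots,v_M(x)$ with $U(x)v_j(x)=e^{i\theta_j(x)}v_j(x)$ (Rellich's theorem for analytic families of normal, in particular unitary, operators), differentiate and pair with $v_j$ to cancel the $v_j'$ contributions and obtain $\theta_j'(x)=\langle D(x)v_j(x),v_j(x)\rangle$, so that \eqref{eqn dminmax def} makes each phase strictly increasing with $\sum_j\theta_j'=\tr D$, and then count crossings of each $\theta_j$ through $2\pi\mathbb{Z}$. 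The discreteness of the crossing set, the identity $\dim W(x_0)=\#\{j:\theta_j(x_0)\in 2\pi\mathbb{Z}\}$ (since $\{v_j(x_0)\}$ is an orthonormal eigenbasis), and the reduction to the scalar counts $N_j$ are all correct.

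You are right to hedge about the strict inequality, and could be blunter: as stated, \eqref{eqn asymp} can fail. For each $j$, with $a=\theta_j(A)/2\pi$, $b=\theta_j(B)/2\pi$, one has $N_j=\lceil b\rceil-\lfloor a\rfloor-1$, hence $N_j-(b-a)=(\lceil b\rceil-b)+(a-\lfloor a\rfloor)-1\in[-1,1)$, with the value $-1$ attained precisely when $a,b\in\mathbb{Z}$. Summing over $j$ gives $\le M$, and this is sharp: for $U(x)=e^{ix}$ on $V=\mathbb{C}$ with $A=0$, $B=2\pi$, the left-hand side of \eqref{eqn asymp} equals $1=M$. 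So the correct uniform bound is $\le M$, strict unless every $\theta_j(A)$ and $\theta_j(B)$ lies in $2\pi\mathbb{Z}$ (i.e.\ $U(A)=U(B)=I$). This overstatement is harmless in the only place the lemma is invoked, Proposition \ref{prop alpha=0}, which needs just an $O(1)$ error term.
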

The following lemma mimics the independence of the eigenspaces.
\begin{lemma}
\label{lemma direct sum W}
Let $I$ be an interval of length at most $\frac {2\dmin}{d_2 M}$. Then the spaces $W(x),$ $x\in I$, are independent, i.e.
\begin{equation}
\label{eqn Wx independent}
\text{If }\phi_x\in W(x)\text{ for each }x\in I\cap\calZ\text{ and }\sum_x \phi_x=0\text{ then }\phi_x=0\ \forall x.
\end{equation}
\end{lemma}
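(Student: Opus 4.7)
I will argue by contradiction: suppose $\phi_j \in W(x_j)$ for $j = 1, \dots, k$ with $x_1, \dots, x_k \in I$ distinct, $\sum_j \phi_j = 0$, and not all $\phi_j$ zero. Applying $U(y)$ to this relation gives $\sum_j U(y)\phi_j = 0$ for every $y\in \R$. The plan is to exploit the eigenvector identities $U(x_j)\phi_j = \phi_j$ by Taylor expanding $U(y)$ around each $x_j$: using $U'(x_j) = iD(x_j)U(x_j)$ and the second-derivative bound in \eqref{eqn dminmax def},
\[
U(y)\phi_j \;=\; \phi_j + i(y - x_j)D(x_j)\phi_j + R_j(y), \qquad \|R_j(y)\| \leq \tfrac{1}{2}(y - x_j)^2 d_2 \|\phi_j\|.
\]
Summing in $j$ and using $\sum_j \phi_j = 0$ produces the master inequality
\[
\Bigl\| \sum_{j=1}^k (y - x_j)\, D(x_j)\phi_j \Bigr\| \;\leq\; \tfrac12 |I|^2 d_2 \sum_{j=1}^k \|\phi_j\|, \qquad y \in I.
\]

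The next step is to convert this family of inequalities (one for each test $y$) into individual bounds on the $\phi_j$. Evaluating at $y = x_1, \dots, x_k$ yields a linear system for the vectors $D(x_j)\phi_j$ whose coefficient matrix is Vandermonde-like in the differences $x_\ell - x_j$. Inverting this system and combining with the monotonicity lower bound $D(x_j) \geq \dmin I$ gives, after bookkeeping, estimates of the form $\dmin\|\phi_j\| \leq C_k |I| d_2 \sum_{j'}\|\phi_{j'}\|$ with a constant $C_k$ depending on $k$. To close the argument one observes that each $\phi_j$ lies in $V$, so a nontrivial dependence $\sum \phi_j=0$ forces $k \leq M = \dim V$; hence $C_k \leq C_M$, and the hypothesis $|I| \leq 2\dmin/(d_2 M)$ makes the error term strictly smaller than the monotonic gain, forcing each $\phi_j = 0$.

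\textbf{Main obstacle.} The chief difficulty is achieving the sharp linear dependence on $M$ in the bound $|I| \leq 2\dmin/(d_2 M)$: a naive pairwise or inductive argument typically loses a factor at each step and produces at best an exponential dependence on $M$. A cleaner route, carried out in \cite{Gri:MUF}, is to pass to the analytic diagonalization $U(x) e_\ell(x) = e^{i\theta_\ell(x)} e_\ell(x)$ of the monotone family, where $\theta_\ell'(x) = \langle D(x) e_\ell(x), e_\ell(x)\rangle \in [\dmin, \dmax]$ by the same calculation that underlies Lemma \ref{lemma counting ev}. In these coordinates the problem decouples into $M$ scalar monotone families $e^{i\theta_\ell}$, and one can track directly how far each eigenvalue path can travel around the unit circle before returning to $1$; the dimensional factor $M$ enters as the number of such paths, giving the bound in its sharp form.
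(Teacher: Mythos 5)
The paper does not actually contain a proof of this lemma: it is stated in the appendix with the remark that ``Discussion and proofs can be found in \cite{Gri:MUF}.'' So there is no in-paper argument to compare against, and your proposal must be judged on its own.

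Your first-order Taylor argument contains a genuine gap, and it is not merely a question of sharpness of constants as your ``main obstacle'' paragraph suggests. Once you use $\sum_j\phi_j=0$, the expression $\sum_j(y-x_j)D(x_j)\phi_j$ is \emph{affine} in $y$: writing $\psi_j=D(x_j)\phi_j$, it equals $y\sum_j\psi_j-\sum_jx_j\psi_j$. Testing at $y=x_1,\dots,x_k$ therefore produces the coefficient matrix $A_{\ell j}=x_\ell-x_j$, which is of the form $x\mathbf{1}^{T}-\mathbf{1}x^{T}$ and has rank at most two; for $k\geq3$ it is \emph{singular}, not Vandermonde, and cannot be inverted. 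Equivalently, the master inequality only tells you that the two moments $\sum_j\psi_j$ and $\sum_jx_j\psi_j$ are small -- together with $\sum_j\phi_j=0$ that is three linear constraints, far short of pinning down $k$ vectors. Expanding to higher order would require bounds on $U^{(m)}$ for $m\geq3$, which are not among the hypotheses \eqref{eqn dminmax def}. So the central step ``inverting this system'' fails, and the argument does not establish the conclusion even with a worse constant. (A secondary, smaller issue: the claim that $\sum_j\phi_j=0$ with not all $\phi_j$ zero ``forces $k\leq M$'' is not immediate -- a nontrivial dependence can involve many collinear nonzero summands; what one actually uses is that only finitely many $x\in I$ lie in $\calZ$, controlled via Lemma \ref{lemma counting ev}.)

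Your second paragraph correctly identifies the route taken in \cite{Gri:MUF} -- analytic diagonalization $U(x)e_\ell(x)=e^{i\theta_\ell(x)}e_\ell(x)$ with $\theta_\ell'=\langle D e_\ell,e_\ell\rangle\in[\dmin,\dmax]$ -- but this is offered only as a sketch of a ``cleaner'' alternative, not carried out. Making it rigorous is where the substance of the lemma lies: one must control both the monotone travel of the phases $\theta_\ell$ across $2\pi\Z$ \emph{and} the rotation of the eigenvectors $e_\ell(x)$ (which is where $d_2$ enters, through the second-derivative bound), then argue that on an interval of length $\leq 2\dmin/(d_2M)$ the eigenvectors contributed by the various $W(x_j)$ cannot satisfy a nontrivial linear relation. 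None of that bookkeeping appears. As it stands, the proposal does not prove the lemma.
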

The following lemma gives a stable version of almost orthogonality.
\begin{lemma} \label{lemma stability}
Assume $\phi\in V\setminus 0$ satisfies
\begin{equation}
\label{eqn phiest}
\|(I-U(x_0))\phi\| \leq \epsilon \|\phi\|.
\end{equation}
Then
\begin{equation}
\label{eqn dist to Z}
\dist (x_0,\calZ) \leq \frac {2\epsilon}{\dmin}.
\end{equation}
Furthermore, there is a constant $C$ only depending on $\dmin,\dmax,d_2,M$ such that if ${\epsilon}< C^{-1}$ then, with $P_W$ denoting the projection to $\bigoplus_{|x-x_0|\leq\sqrt\epsilon} W(x)$,
\begin{equation}
\label{eqn dist to sum of W}
\|\phi - P_W\phi\| \leq C\epsilon^{1/2(M+1)}\|\phi\|.
\end{equation}
\end{lemma}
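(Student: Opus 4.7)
I would diagonalize the analytic family $U(x)$ and reduce everything to scalar monotone phases. Since $U(x)$ is unitary and depends analytically on $x$, there exist orthonormal analytic eigenvectors $v_k(x)$ with $U(x)v_k(x) = e^{-i\theta_k(x)}v_k(x)$. Differentiating this identity and using $U' = iDU$ yields $\theta_k'(x) = \pm\langle D(x)v_k(x),v_k(x)\rangle$, so by the monotonicity hypothesis every phase $\theta_k$ is strictly monotone with $d_{\min} \le |\theta_k'| \le d_{\max}$. Thus the set $\mathcal{Z}$ consists precisely of the zeros of $\theta_k \bmod 2\pi$ as $k$ varies.

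\textbf{Part 1.} Expanding $\phi = \sum_k c_k v_k(x_0)$, the hypothesis \eqref{eqn phiest} reads $\sum_k |1-e^{-i\theta_k(x_0)}|^2 |c_k|^2 \le \epsilon^2 \|\phi\|^2$. Since $\sum_k |c_k|^2 = \|\phi\|^2$, some index $k_*$ must satisfy $|1-e^{-i\theta_{k_*}(x_0)}| \le \epsilon$, and the elementary inequality $|1-e^{i\vartheta}|\ge (2/\pi)|\vartheta|$ for $|\vartheta|\le\pi$ then gives $|\theta_{k_*}(x_0) - 2\pi n| \le \pi\epsilon/2$ for some integer $n$. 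Because $|\theta_{k_*}'|\ge d_{\min}$, monotonicity produces $x_* \in \mathcal{Z}$ with $\theta_{k_*}(x_*) = 2\pi n$ and $|x_*-x_0|\le \pi\epsilon/(2d_{\min}) \le 2\epsilon/d_{\min}$, proving \eqref{eqn dist to Z}.

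\textbf{Part 2.} For \eqref{eqn dist to sum of W} I would pick a threshold $\eta$ (to be optimized), partition the indices into $S := \{k : |1-e^{-i\theta_k(x_0)}|\le\eta\}$ and its complement $L$, and set $\phi_S := \sum_{k\in S}c_k v_k(x_0)$. The hypothesis immediately gives $\|\phi-\phi_S\| \le (\epsilon/\eta)\|\phi\|$. For each $k\in S$, Part~1 supplies $x_k \in \mathcal{Z}$ with $|x_k-x_0|\le C\eta$; imposing $\eta \le \sqrt\epsilon/C$ places every such $x_k$ inside $[x_0-\sqrt\epsilon,x_0+\sqrt\epsilon]$, so each $v_k(x_k) \in W(x_k) \subset \bigoplus_{|x-x_0|\le\sqrt\epsilon}W(x)$ (a genuine direct sum for $\epsilon$ small enough, by Lemma~\ref{lemma direct sum W}). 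It remains — and this is the delicate step — to show that $\phi_S$ is close to $\Span\{v_k(x_k):k\in S\}$, equivalently that $v_k(x_0)$ is close to $v_k(x_k)$.

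\textbf{The main obstacle and optimization.} The difficulty is that $\|v_k'\|$ is controlled by the reciprocal of the spectral gap of $e^{-i\theta_k(x)}$ from the other eigenvalues of $U(x)$, and that gap can collapse when several ``small'' phases cluster. The remedy I would employ is to replace individual eigenvectors by the joint spectral projection $P_S(x)$ of $U(x)$ on the arc $\{|1-z|\le\eta\}$, which sees only the gap to the \emph{large} eigenvalues; the Riesz contour integral $P_S(x) = \frac{1}{2\pi i}\oint(z-U(x))^{-1}dz$ on a contour of distance $\gtrsim\eta$ from $\mathrm{spec}\,U(x)$ then gives $\|P_S(x)-P_S(x_0)\|\le C|x-x_0|/\eta$. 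This already suffices unless the small phases themselves near-coincide; to handle that, I would iterate the splitting on finer and finer scales, each level of nesting costing a factor of $\sqrt{\,}$ in the estimate. After at most $M$ refinements the contribution from the small part is $\lesssim (\sqrt\epsilon/\eta)\,\|\phi_S\|$ (with the implicit constant absorbing $M$); combining with $\|\phi-\phi_S\|\le(\epsilon/\eta)\|\phi\|$ and optimizing $\eta$ yields $\|\phi - P_W\phi\| \le C\epsilon^{1/(2M+2)}\|\phi\|$. The bookkeeping of this nested-cluster induction is the technical heart of the argument and is where the exponent $1/(2(M+1))$ arises organically; the detailed execution is carried out in \cite{Gri:MUF}.
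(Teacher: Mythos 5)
The paper does not prove this lemma; Section~\ref{sec monotone unitary} opens with ``Discussion and proofs can be found in \cite{Gri:MUF},'' so there is no in-paper argument to compare against, and I can only evaluate your sketch on its own merits.

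Part~1 is complete and correct. Real-analytic diagonalization (Rellich) supplies global analytic eigenphases $\theta_k$ with orthonormal eigenvectors $v_k$; your differentiation gives $\theta_k'(x)=-\langle D(x)v_k(x),v_k(x)\rangle$, hence $d_{\min}\le|\theta_k'|\le d_{\max}$; the weighted-average step yields a $k_*$ with $|1-e^{-i\theta_{k_*}(x_0)}|\le\epsilon$; and strict monotonicity then produces a zero of $\theta_{k_*}\bmod 2\pi$ within $\pi\epsilon/(2d_{\min})\le 2\epsilon/d_{\min}$ of $x_0$, giving \eqref{eqn dist to Z}.

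Part~2 correctly isolates the real obstacle (the size of $v_k'$ is governed by the gap to the entire remaining spectrum, which collapses when small phases near-coincide) and the right tool (the Riesz projection $P_S(x)$ onto the small-phase cluster, which only feels the gap to the large phases). However, the step that actually manufactures the exponent $1/(2(M+1))$ is asserted rather than derived, and the arithmetic as written does not close. You claim the residual from the small part after at most $M$ refinements is bounded by a constant times $(\sqrt\epsilon/\eta)\|\phi_S\|$ and propose to combine this with $\|\phi-\phi_S\|\le(\epsilon/\eta)\|\phi\|$ and ``optimize $\eta$''; but you have already imposed $\eta\le\sqrt\epsilon/C$, which forces $\sqrt\epsilon/\eta\ge C>1$, so the claimed bound on the small part is worse than trivial and there is nothing to optimize. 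Moreover, the naive single-scale version (one pigeonhole for a spectral gap, one application of Riesz-projection stability) runs into the same constraint $\eta\le\sqrt\epsilon$ and does not by itself yield the stated power of $\epsilon$ --- which is exactly why a nested-cluster induction is needed, and tracking how the scales telescope, how many levels occur, and how the losses compound across levels is where $1/(2(M+1))$ must come from. You defer that bookkeeping to \cite{Gri:MUF}; the deferral is honest, but it means the second half of the lemma is not actually proved by the proposal as written, and the intermediate quantitative claim about the small part should be retracted or reworked.
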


We also need a fact about 2-parameter perturbations.
\begin{theorem}
\label{thm unitary perturb}
Let $U(x,y)$ be a unitary operator in a finite-dimensional Hermitian vector space depending real analytically on $x,y\in\R$. Assume
\begin{equation}
\label{eqn positivity}
\frac 1i \frac{\partial U}{\partial x} U^{-1} > 0\quad \text{ at } (x_0,y_0).
\end{equation}
Then the set $\{(x,y):\, U(x,y)\text{ has eigenvalue one}\}$ is, in a neighborhood of $(x_0,y_0)$, a union of real analytic curves $x=x_j(y)$. The corresponding projections $P_j(y)$ to the eigenspace of $U(x_j(y),y)$ with eigenvalue one are also analytic functions of $y\neq y_0$, extending analytically to $y=y_0$, and $\sum_j P_j(y_0)$ is the projection to $\ker (I-U(x_0,y_0))$.
\end{theorem}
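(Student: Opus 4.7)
The plan is to reduce to a finite-dimensional Hermitian perturbation problem via a Riesz projection and then combine Weierstrass preparation with Rellich's theorem for hyperbolic polynomials.

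\textbf{Step 1 (Reduction).} Let $\Gamma$ be a small positively oriented contour in $\mathbb{C}$ enclosing $1$ and no other eigenvalue of $U(x_0,y_0)$. The Riesz projection
\[
 P(x,y) := \frac{1}{2\pi i}\oint_\Gamma \bigl(\zeta I - U(x,y)\bigr)^{-1}\,d\zeta
\]
is analytic in $(x,y)$ near $(x_0,y_0)$ with constant rank $m:=\dim\ker(I-U(x_0,y_0))$; an analytic orthonormal frame for its range (Gram--Schmidt on $P(x,y)v_j$ for a fixed basis $v_1,\dots,v_m$ of the range at $(x_0,y_0)$) identifies the eigenvalue-$1$ problem for $U$ with the same problem for an analytic family of unitary $m\times m$ matrices $A(x,y)$ with $A(x_0,y_0)=I$. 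Writing $A=e^{iB}$ near the base point yields a Hermitian analytic $B(x,y)$ with $B(x_0,y_0)=0$ and, by \eqref{eqn positivity}, $\partial_x B(x_0,y_0)>0$; the set in question becomes $\{\det B(x,y)=0\}$.

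\textbf{Step 2 (Weierstrass preparation and hyperbolicity).} Since $\partial_x B(x_0,y_0)$ is invertible, $f(x,y):=\det B(x,y)$ has a zero of order exactly $m$ in $x$ at $(x_0,y_0)$, so Weierstrass preparation gives
\[
 f(x,y)=u(x,y)\,Q(x,y),\qquad Q(x,y)=(x-x_0)^m+\sum_{j=0}^{m-1}a_j(y)(x-x_0)^j,
\]
with $u$ analytic and nonvanishing and $a_j$ real-analytic with $a_j(y_0)=0$. Moreover $Q(\cdot,y)$ is \emph{hyperbolic} for every $y$ near $y_0$: the eigenvalues $\mu_\kappa(x,y)$ of $B(x,y)$ are real, and the Feynman--Hellmann identity combined with $\partial_x B>0$ near $(x_0,y_0)$ shows each $\mu_\kappa(\cdot,y)$ is strictly increasing in $x$, so crosses $0$ at exactly one real point; hence $Q(\cdot,y)$ has $m$ real roots counted with multiplicity.

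\textbf{Step 3 (Analytic factorization and projections).} By the one-parameter Rellich theorem on the real-analytic parametrization of real roots of a hyperbolic polynomial with real-analytic coefficients, there exist real-analytic $x_1(y),\dots,x_m(y)$ defined near $y_0$, with $x_j(y_0)=x_0$, such that $Q(x,y)=\prod_j(x-x_j(y))$; these are the asserted curves. For the projections, apply Rellich's theorem to the Hermitian analytic one-parameter family $y\mapsto B(x_j(y),y)$: since $\det B(x_j(y),y)=0$ for all $y$, at least one of its $m$ real-analytic eigenvalue branches vanishes identically, and a suitable choice of the associated rank-one analytic spectral projection (indexed so that different $j$'s pick different branches, tracked by their tangent directions as generalized eigenvectors of the pencil $\partial_y B(x_0,y_0)-\kappa\,\partial_x B(x_0,y_0)$) yields $P_j(y)$, which when transported back through the frame of Step~1 gives the asserted projection onto (a $1$-dimensional piece of) the eigenvalue-$1$ eigenspace of $U(x_j(y),y)$. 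At $y=y_0$ all $m$ Rellich-analytic branches of $y\mapsto B(x_0,y)$ vanish (as $B(x_0,y_0)=0$), so all $m$ projections contribute and $\sum_j P_j(y_0)$ equals the Riesz projection $P(x_0,y_0)$, i.e.\ the projection onto $\ker(I-U(x_0,y_0))$.

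The main obstacle is the analytic factorization in Step~3: for a generic two-parameter Hermitian analytic family, analytic eigenvalue branches do not exist (Puiseux monodromy around discriminantal singularities can force $y^{1/k}$-type behaviour). What rescues us here is the combination of hyperbolicity (forced by Hermiticity together with $\partial_x B>0$) and the one-dimensionality of $y$, which places us squarely in the scope of Rellich's classical theorem.
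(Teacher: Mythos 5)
The paper does not prove this theorem itself --- it is cited from \cite{Gri:MUF} --- so there is no in-paper argument to compare against. On their own merits, your Steps~1--2 and the curve-existence half of Step~3 are sound: the Riesz reduction to an analytic Hermitian logarithm $B(x,y)$ with $\partial_x B(x_0,y_0)>0$, Weierstrass preparation for $\det B$, hyperbolicity of the Weierstrass polynomial forced by the monotone increase of the (real) eigenvalues of $B(\cdot,y)$, and the one-parameter Rellich theorem for hyperbolic polynomials do produce real-analytic curves $x_j(y)$.

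The projection half of Step~3 has a genuine gap. You extract $P_j$ from a Rellich decomposition of the one-parameter family $y\mapsto B(x_j(y),y)$, but these are $m$ distinct families (one per $j$) and your argument never compares their decompositions, so the closing claim $\sum_j P_j(y_0)=P(x_0,y_0)$ does not follow. Moreover, with $P_j$ the usual orthogonal eigenprojections that identity is simply false in general: take $U(x,y)=\begin{pmatrix} e^{2ix}&0\\0&e^{4ix}\end{pmatrix}\begin{pmatrix}\cos y& i\sin y\\ i\sin y& \cos y\end{pmatrix}$ near $(x_0,y_0)=(0,0)$, so that $D(0,0)=\begin{pmatrix}2&0\\0&4\end{pmatrix}>0$. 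One finds $\det(I-U)=0\Leftrightarrow\cos 3x=\cos x\cos y$, giving two branches $x=\pm y/(2\sqrt2)+O(y^2)$, and the eigenvalue-$1$ eigenvectors converge in direction to $(\mp\sqrt2,1)$ as $y\to0$; these are \emph{not} orthogonal, so the rank-one orthogonal projections $P_1(0),P_2(0)$ do not sum to $I$. What is true --- and what the rest of the paper actually uses (cf.\ the analogous independence statement in Lemma~\ref{lemma direct sum W}) --- is that the limiting directions are orthogonal in the positive form $\langle D(x_0,y_0)\,\cdot\,,\cdot\rangle$, hence linearly independent, so the ranges of the $P_j(y_0)$ direct-sum to $\ker(I-U(x_0,y_0))$. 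A short proof: for eigenvalue-$1$ eigenvectors $\phi_j(y),\phi_k(y)$ of $U(x_j(y),y),U(x_k(y),y)$ with $x_j(y)\neq x_k(y)$ one has $\langle(U(x_j,y)-U(x_k,y))\phi_k,\phi_j\rangle=0$; divide by $x_j-x_k$ and let $y\to y_0$ to get $\langle D(x_0,y_0)\phi_k(y_0),\phi_j(y_0)\rangle=0$. Your pencil $\partial_y B-\kappa\,\partial_x B$ encodes exactly this (its eigenvectors are $\partial_x B$-orthogonal), so the fix is to state and prove the $D$-orthogonality/direct-sum conclusion rather than the literal sum-of-orthogonal-projections identity.
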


%\bibliography{all}
%\bibliographystyle{amsplain}
\bibliographystyle{amsplain}
\bibliography{mypapers,dglib,mathlib}

\end{document}